\long\global\def\C#1\F{{}}
 \tikzset{->-/.style={decoration={
  markings,
  mark=at position #1 with {\arrow{triangle 45}}}, 
  postaction={decorate}}}
\newcommand{\R}{\mathbb{R}}
\DeclareMathOperator{\conv}{\operatorname{conv}}
\def\st{{\;\vrule height8pt width1pt depth2pt\;}}
\def\PPP{\mathcal{P}}
\def\RRR{\mathcal{R}}
\def\es{\varnothing}
\newcommand{\plo}[1]{\mathrm{P}_{\mathrm{LO}}^{#1}} % Linear ordering polytope on n vertices
\newcommand{\pso}[1]{\mathrm{P}_{\mathrm{SO}}^{#1}} % Semiorder polytope on n vertices
\newcommand{\pio}[1]{\mathrm{P}_{\mathrm{IO}}^{#1}} % Interval order polytope on n vertices
\newcommand{\ppo}[1]{\mathrm{P}_{\mathrm{PO}}^{#1}} % Partial order polytope on n vertices
\newcommand{\pswo}[1]{\mathrm{P}_{\mathrm{SWO}}^{#1}} % Strict weak order polytope on n vertices
\newcommand{\pwo}[1]{\mathrm{P}_{\mathrm{WO}}^{#1}} % Partial order polytope on #1 vertices
\newcommand{\pany}[1]{\mathrm{P}_{\PPP}^{#1}} % Any polytope on n vertices
\newcommand{\Vset}[1]{[#1]}
\def\Arcs#1{A_{#1}{}}
\def\RR{\mathbin{R}{}} 
\def\SS{\mathbin{S}{}} 
\theoremstyle{plain}
\newtheorem{prop}{Proposition}
\newtheorem{coro}{Corollary}
\newtheorem{theo}{Theorem}
\newtheorem{lemm}{Lemma}
\newtheorem{prob}{Problem}
\theoremstyle{definition}
\newtheorem{exam}{Example}
\date{May 19, 2014}
\begin{document}

%\setlength{\pdfpageheight}{260mm}
%\textheight255mm\voffset-33mm\textwidth150mm\hoffset0mm

%-----------------------------------------------
%\begin{frontmatter}
%\date{plutot laisser en automatique; de fait October 28, 2015}
\begin{center}
{
\textbf{Primary Facets of Order Polytopes}%
\footnote{Universit\'e Libre de Bruxelles, D\'epartement de Math\'ematique c.p.~216, B-1050 Bruxelles, Belgium. \{doignon,Selim.Rexhep\}@ulb.ac.be}
}

\vskip3mm

\textsc{Jean-Paul Doignon and Selim Rexhep}
%\footnote{Tel.:+32\,2\,650.58.63; Fax.:+32\,2\,650.58.67}}
\end{center}

\vskip3mm

\begin{quotation} \small
Mixture models on order relations play a central role in recent investigations of transitivity in binary choice data.
In such a model, the vectors of choice probabilities are the convex combinations of the characteristic vectors of all order relations of a chosen type. 
The five prominent types of order relations are linear orders, weak orders, semiorders, interval orders and partial orders.  For each of them, the problem of finding a complete, workable characterization of the vectors of probabilities is crucial---but it is reputably inaccessible.  
Under a geometric reformulation, the problem asks for a linear description of a convex polytope whose vertices are known.   
As for any convex polytope, a shortest linear description comprises one linear inequality per facet.  Getting all of the facet-defining inequalities of any of the five order polytopes seems presently out of reach.   Here we search for the facet-defining inequalities which we call primary because their coefficients take only the values $-1$, $0$ or $1$.  We provide a classification of all primary, facet-defining inequalities of three of the five order polytopes.   
Moreover, we elaborate on the intricacy of the primary facet-defining inequalities of the linear order and the weak order polytopes.
\end{quotation}

\noindent\textbf{Keywords:}
semiorder, \quad  semiorder polytope, \quad order polytope, \quad facet-defining inequality.

%% MSC codes here, in the form: \MSC code \sep code
%% or \MSC[2008] code \sep code (2000 is the default)
% choisi par jpd :
\noindent\textbf{MSC Classification:} [2010] 06A07, \quad 52B12, \quad 91E99.
% 06A07 Combinatorics of partially ordered sets
% 52B12 Special polytopes (linear programming, centrally symmetric, etc.)
% 90C57 Polyhedral combinatorics, branch-and-bound, branch-and-cut
% 91E10 Cognitive psychology
% 91E99 None of the above, but in this section

%\maketitle

%\tableofcontents

%\newpage

%%%%%%%%%%%%%%%%%%%%%%%%%%%%%%%%%%%%%%%%%%%%%%%%%%%%%%%%%%%%%%%%%%%%%%
\section{Introduction}
\label{sect_Introduction}

As it is the case in general for testing a deterministic theory on random sample data \citep{Luce_1959, Luce_1995, Luce_1997}, 
checking whether transitivity is confirmed by
a collection of two-item comparisons raises several interesting issues.
Binary choice data usually consist of the relative frequencies of choice among any two alternatives.
A formal approach to test whether the relative frequencies are consistent with transitivity relies on probabilistic models and derived statistical tests.  \cite{Regenwetter_Davis_Stober_2008} and \cite{Marley_Regenwetter_2016} survey models for binary choice (forced or non-forced).  We focus here on the characterization problem of the choice probabilities predicted by five of the main models.

A random utility model of binary choice relates the probability of chosing alternative $i$ over $j$ to the probability that the utility of $i$, taken as a random variable, exceeds that of $j$.  
As known since a long time for the direct comparison of random utility values\footnote{under the assumption that equality of the utilities of two distinct alternatives occur with probability zero},
the model happens to be a mixture model on linear orders \citep{Block_Marschak_1960}.  In precise terms, vectors of binary choice probabilities coincide with convex combinations of the characteristic vectors of linear orders on the set of alternatives. 
Recent work (\citealp{Regenwetter_Marley_2001, Regenwetter_Davis_Stober_2011, Regenwetter_Davis_Stober_2012}), extends the traditional setting of linear orders to various types of order relations, principally weak orders, semiorders, interval orders and partial orders (the meaning of the terms will be explained in the next section).  The random utility model, based on a specific, modified way of comparing two random utility values, admits a reformulation as a mixture model of order relations.

One of the fundamental problems on probabilistic models is to find out a workable characterization of the (probabilistic) predictions it makes.  In the case of the mixture models of order relations, the characterization plays an important role in implementing tests of  transitivity on binary choice data.
However, complete characterizations were obtained only when the number of alternatives is small (we give details in Section~\ref{sect_diff}).
Even for the particular case of linear orders (in a way the most structured of our relations), the characterization problem raises  mathematical difficulties and is even seen as unsolvable. 
Section~\ref{sect_diff} recalls some explanations for the latter assertion, and moreover indicates why similar difficulties appear for the other four types of order relations.

For mixture models, as the ones we investigate here, a geometric point of view is most useful.  Indeed, a characterization of the model is akin to the description of a certain polytope.  More precisely, the polytope is given by its vertices (in our cases, the characteristic vectors of the order relations) and the aim is to describe the polytope as the solution set of a system of linear (in)equalities.  If such a linear description is moreover a shortest one, then the number of linear equations is equal to the codimension of the polytope, and there is one linear inequality per facet of the polytope.  This shows the importance of facet-defining inequalities, or FDI's.  We refer the reader to Section~\ref{sect_Background} for a short summary of the concepts and results we will need and to \cite{Ziegler_1998} for more background on general convex polytopes.
The five types of order relations we mentioned before lead to five convex polytopes: the linear ordering polytope, the weak order polytope, the interval order polytope, the semiorder polytope and the partial order polytope.  In the past, the first polytope received the most attention (we provide references in the sections dedicated to the respective polytopes but we want to mention here an unpublished manuscript of \citealp{Suck_1995}, the first to promote a common approach to order polytopes).
It is recognized that obtaining a full, linear description of any of these five polytopes is a very difficult problem (cf.~Section~\ref{sect_diff}).  We found it interesting to investigate the facet-defining inequalities with coefficients in the set $\{-1,0,1\}$, with the aim of assessing the relative difficulties in the five cases.  A linear inequality is \textsl{primary} when its coefficients (including the independent term) take only the values $-1$, $0$ or $1$. 
Here is a summary of our results.
  
As a warm-up exercice, we provide a complete description of the primary linear inequalities which define facets of the partial order polytope (Section~\ref{sect_Primary_FDI_PPO}) or the interval order polytope (Section~\ref{sect_Primary_FDI_PIO}).  Then we present a rather satisfiable understanding of the FDI's of the semiorder polytope; here, the results turn out to be rather technical (see Sections~\ref{sect_Lifting_Lemma} to \ref{sec_FDI_PSO}).  
We see the cases of the strict weak order and the linear ordering polytopes to be out of our reach even for primary linear inequalities, as we explain in Sections~\ref{sect_Primary_FDI_PWO} and \ref{sect_Primary_FDI_PLO}. 

Two directions of possible further research are worth mentioning here.  First, techniques from combinatorial optimization could be applied to the primary linear inequalities found here to derive more inequalities (such as those resulting from so-called Chv\'atal-Gomory cuts; for an introduction to the techniques, see \citealp{Bertsimas_Weismantel_2005}, Section~9.4, or \citealp{Conforti_Cornuejols_Zambelli_2014}, Chapter~5). 
Second, when a polytope $Q$ contains a polytope $P$, new FDI's of one of the two polytopes can sometime be infered from FDI's of the other polytope; we leave for future work the related inspection of the inclusions among our five order polytopes.

The authors thank Samuel Fiorini for helpful discussions at the start of the project.

%%%%%%%%%%%%%%%%%%%%%%%%%%%%%%%%%%%%%%%%%%%%%%%%%%%%%%%%%%%%%%%%%%%%%%
\section{Background: Types of Order Relations and their Polytopes}
\label{sect_Background}

In this section we briefly recall some basic facts, first about order  relations, then about polytopes.
Throughout the paper, $n$ denotes a natural number with $n \geqslant 2$.  We write $\Vset{n}$ for the set $\{1,\ldots,n\}$ of \textsl{elements} (or alternatives).
Moreover, we denote by $\Arcs{n}$ the set of pairs
of distinct elements, that is: 
$$
\Arcs{n} = \{(i,j) \st  i,j \in \Vset{n}, i \neq j\}.
$$

Let $\RR$ be an irreflexive binary relation on $\Vset{n}$ (that is, $\RR \subseteq \Arcs{n}$); we write $i \RR j$ for $(i,j)\in \RR$.  Then $\RR$ is a \textsl{(strict) partial order} if $\RR$ is asymmetric (that is, if $i \RR j$ then not~$j \RR i$) and transitive (if $i \RR j$ and $j \RR k$ then $i \RR k$). 
A \textsl{linear order} is a partial order which is total (two distinct elements are always comparable). 
A \textsl{strict weak order} is a partial order which is \textsl{negatively transitive} (that is, $i \RR k$ implies $i \RR j$ or $j \RR k$).  (Notice that strict weak orders are the complements of `complete preorders', as we explain in Section~\ref{sect_Primary_FDI_PWO}.)
An \textsl{interval order} $S$ on $\Vset{n}$ is a partial order for which there exist two maps $f$ and $g$ from $\Vset{n}$ to $\R$ such that
$$
i \SS j \text{ if and only if } g(i) < f(j) 
$$
(thus $i \SS j$ exactly if the closed interval $[f(i),g(i)]$ of the real line is located entirely below the similar interval $[f(j),g(j)]$).
The pair $(f,g)$ of maps is then called an \textsl{interval representation} of $S$.  If $S$ admits an interval representation $(f,g)$ such that $g(i) = f(i) + 1$ for each $i$ in $\Vset{n}$ (every interval has length $1$) then $S$ is also called a \textsl{semiorder}; we then say that $f$ is a \textsl{unit interval representation}.  We now state two classical theorems characterizing interval orders and semiorders (for the proofs as well as additional basic terminology, see a textbook as for example \cite{Fishburn_1985} or  \cite{Trotter_1992}).
 It is easy to check that the partial orders represented by their Hasse diagrams in Figure~\ref{Forbidden_posets} are not semiorders; we denote them by $\underline{2} + \underline{2}$ and $\underline{3} + \underline{1}$ respectively. The second one is an interval order, while the first one is not.

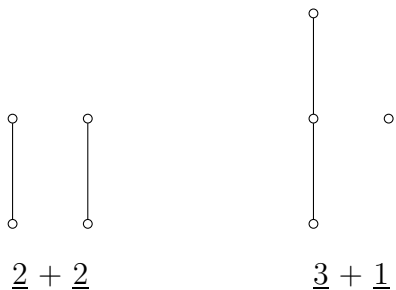
\begin{figure}
\begin{center}
\begin{tikzpicture}[yscale=0.7]
  \tikzstyle{vertex}=[circle,draw,fill=white, scale=0.3]
  
  \node (z_1) at (0,0) [vertex] {};
  \node (z_2) at (0,2) [vertex] {};
  \node (z_3) at (1,0) [vertex] {};
  \node (z_4) at (1,2) [vertex] {};
    \draw (z_1) -- (z_2);
    \draw (z_3) -- (z_4);
\node at (0.5,-1) {\underline{2} + \underline{2}};

\begin{scope}[xshift=4cm]
  \node (z_5) at (0,0) [vertex] {};
  \node (z_6) at (0,2) [vertex] {};
  \node (z_7) at (0,4) [vertex] {};
  \node (z_8) at (1,2) [vertex] {};
     \draw (z_5) -- (z_6) -- (z_7); 
\node at (0.5,-1) {\underline{3} + \underline{1}};
\end{scope}

\end{tikzpicture}
\end{center}
\caption{The Hasse diagrams of the posets $\underline{2} + \underline{2}$ and $\underline{3} + \underline{1}$.}
\label{Forbidden_posets}
\end{figure}

\begin{theo}[Fishburn Theorem]
A partial order is an interval order if and only if it does not induce any $\underline{2} + \underline{2}$.
\end{theo}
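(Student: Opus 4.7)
The plan is to handle the two implications separately. For the ``only if'' direction, I suppose that $\RR$ is an interval order with representation $(f, g)$ and derive a contradiction from an induced $\underline{2}+\underline{2}$ on elements $a, b, c, d$ with $a \RR b$ and $c \RR d$ as the only comparabilities. The two comparabilities translate to $g(a) < f(b)$ and $g(c) < f(d)$, while each of the four incomparabilities (on $\{a,c\}$, $\{a,d\}$, $\{b,c\}$ and $\{b,d\}$) gives an overlap inequality of the form $g(i) \geqslant f(j)$ on the endpoints. Chaining the relevant four yields
\[
g(a) \geqslant f(d) > g(c) \geqslant f(b) > g(a),
\]
which is absurd.

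For the converse, my strategy is structural. Assume the partial order $\RR$ on $\Vset{n}$ induces no $\underline{2}+\underline{2}$. The key lemma I aim for is that the successor sets $D(x) = \{y \in \Vset{n} \st x \RR y\}$ are linearly ordered by inclusion. Supposing that $D(x)$ and $D(y)$ are incomparable, I pick $a \in D(x) \setminus D(y)$ and $b \in D(y) \setminus D(x)$ and argue that $\{x, y, a, b\}$ induces a $\underline{2}+\underline{2}$. Pairwise distinctness of the four elements follows from asymmetry together with transitivity (for instance $x = b$ would combine $y \RR b$ with $x \RR a$ to yield $y \RR a$, contradicting $a \notin D(y)$). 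Every spurious comparability on the quadruple is then excluded by asymmetry (such as $a \RR x$, which conflicts with $x \RR a$) or by transitivity applied to the defining choices of $a$ and $b$ (such as $x \RR y$, which would force $x \RR b$ through $y \RR b$). A dual argument proves that the predecessor sets $U(x) = \{y \in \Vset{n} \st y \RR x\}$ also form a chain.

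Once both families are chains, an interval representation can be built by a standard induction on $n$: remove an element $x$ with $U(x) = \varnothing$ (no predecessor), invoke the inductive hypothesis on the restriction of $\RR$ to $\Vset{n} \setminus \{x\}$, and extend the resulting representation $(f', g')$ by placing the interval of $x$ well to the left and choosing $g(x)$ in a suitable gap of $f'$-values. The existence of such a gap is guaranteed by the chain structure: the elements of $D(x)$ form precisely a suffix of any linear extension compatible with the downset chain, so the $f'$-values of elements in $D(x)$ cleanly separate from those of elements outside $D(x)$.

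The step I expect to be the main obstacle is the downset-chain lemma: the case analysis certifying that none of the eight potentially extra comparabilities on the quadruple $\{x, y, a, b\}$ survives is tedious but purely mechanical. The inductive extension step is comparatively routine once the two chain structures are in place.
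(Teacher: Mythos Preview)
The paper does not supply a proof of this theorem; it is quoted as a classical result with a pointer to the textbooks of Fishburn (1985) and Trotter (1992). There is therefore nothing in the paper to compare your argument against, but your proposal can be assessed on its own.

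Your ``only if'' direction and the successor-set chain lemma are correct and cleanly argued. The genuine gap lies in the inductive construction of the representation. After removing a minimal element $x$ and invoking the inductive hypothesis, you obtain \emph{some} interval representation $(f',g')$ of the restriction and then assert that the $f'$-values on $D(x)$ are all strictly larger than the $f'$-values off $D(x)$. An arbitrary representation of the restriction need not have this property. Concretely, on $\{1,2,3,4\}$ take $1 \RR 3$, $1 \RR 4$, $2 \RR 4$ and nothing else (this induces no $\underline{2}+\underline{2}$); remove the minimal element $1$ and represent $\{2,3,4\}$ by $f'(2)=f'(3)=0$, $g'(2)=1$, $g'(3)=3$, $f'(4)=2$, $g'(4)=3$. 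Then $3\in D(1)$ and $2\notin D(1)$, yet $f'(3)=f'(2)=0$, so no choice of $g(1)$ satisfies $g(1)<f'(3)$ and $g(1)\geqslant f'(2)$ simultaneously. The underlying phenomenon is that $2$ and $3$ acquire \emph{equal} predecessor sets once $1$ is deleted, so nothing in the restricted order forces their left endpoints apart; your appeal to ``a suffix of any linear extension compatible with the downset chain'' does not control the $f'$-values handed to you by the induction.

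The repair is standard: either drop the induction and build $(f,g)$ directly from the two chains (for instance, let $f(i)$ be the rank of $U(i)$ among the distinct predecessor sets and define $g(i)$ analogously from the successor-set chain), or strengthen the inductive hypothesis so that the representation carried along already has $f'$-values that strictly increase along the chain of predecessor sets.
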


\begin{theo}[Scott-Suppes Theorem]\label{theo_Scott_Suppes}
A partial order is a semiorder if and only if it does not induce any $\underline{2} + \underline{2}$ nor $\underline{3} + \underline{1}$.
\end{theo}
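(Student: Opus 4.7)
The forward direction is a direct verification. If $\RR$ admits a unit interval representation $f$, then $i \RR j$ means $f(i)+1 < f(j)$ while incomparability of $i$ and $j$ means $|f(i)-f(j)| \leqslant 1$. For $\underline{2}+\underline{2}$ on $\{a,b,c,d\}$ with the only comparabilities $a \RR b$ and $c \RR d$, combining $f(b) > f(a)+1$ and $f(d) > f(c)+1$ with $f(b) \leqslant f(c)+1$ and $f(d) \leqslant f(a)+1$ yields $f(b)+f(d) > f(b)+f(d)$. For $\underline{3}+\underline{1}$ on a chain $x \RR y \RR z$ with $w$ isolated, the incomparabilities pin $f(w)$ into $[f(z)-1,\,f(x)+1]$, whereas the chain forces $f(z) > f(x)+2$.

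For the backward direction, let $\RR$ on $\Vset{n}$ induce neither $\underline{2}+\underline{2}$ nor $\underline{3}+\underline{1}$. Fishburn's Theorem gives that $\RR$ is an interval order; however, an arbitrary interval representation of $\RR$ need not be a unit one, so the plan is to build a unit representation directly from $\RR$. Introduce the trace preorder $\preceq$ on $\Vset{n}$ defined by $i \preceq j$ iff $\{k \st k \RR i\} \subseteq \{k \st k \RR j\}$ and $\{k \st j \RR k\} \subseteq \{k \st i \RR k\}$. The $\underline{2}+\underline{2}$-free hypothesis shows that $\preceq$ is a total preorder, and a short check (using irreflexivity and transitivity) gives that $i \RR j$ implies $i \prec j$ strictly. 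Enumerate $\Vset{n}$ as $j_1 \preceq j_2 \preceq \cdots \preceq j_n$, and assign $f$ greedily along this enumeration: set $f(j_1) = 0$, and at step $s$ choose $f(j_s)$ in the interval
$$
\bigl(\,\max\{f(j_t) + 1 \st t < s,\ j_t \RR j_s\},\ \min\{f(j_t) + 1 \st t < s,\ \text{not } j_t \RR j_s\}\,\bigr],
$$
with the conventions $\max \es = -\infty$ and $\min \es = +\infty$.

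It remains to verify that this interval is nonempty at every step, equivalently that $f(j_t) < f(j_{t'})$ whenever $t,t' < s$ with $j_t \RR j_s$ and $j_{t'}$ incomparable with $j_s$. I expect this to be the main obstacle. A case analysis on how $j_t$ and $j_{t'}$ relate to the elements already placed will show that the only configuration able to reverse the required inequality is a $\underline{3}+\underline{1}$-pattern involving $j_t$, $j_{t'}$, $j_s$ and one previously placed element; the $\underline{2}+\underline{2}$-free hypothesis alone is not enough, which is precisely why the second forbidden subposet appears in the statement. Ruling out this configuration by assumption completes the construction, and a final pass checking that $\RR = \{(i,j) \st f(i)+1 < f(j)\}$ finishes the proof.
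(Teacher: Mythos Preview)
The paper does not prove the Scott--Suppes Theorem; it states it as a classical result and refers the reader to the textbooks of Fishburn (1985) and Trotter (1992). There is therefore no paper-proof to compare against, and your proposal must stand on its own.

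Your forward direction is correct.

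In the backward direction there is a genuine misattribution that undermines the outline. You assert that the $\underline{2}+\underline{2}$-free hypothesis alone makes the trace preorder $\preceq$ total. It does not: take $\underline{3}+\underline{1}$ itself, with chain $x \RR y \RR z$ and $w$ isolated. Then $\{k \st k \RR y\}=\{x\}\not\subseteq\varnothing=\{k \st k \RR w\}$ and $\{k \st y \RR k\}=\{z\}\not\subseteq\varnothing=\{k \st w \RR k\}$, so neither $y\preceq w$ nor $w\preceq y$. What $\underline{2}+\underline{2}$-freeness gives is that the down-sets $\{k \st k \RR i\}$ are linearly ordered by inclusion and, separately, so are the up-sets $\{k \st i \RR k\}$; it is precisely the $\underline{3}+\underline{1}$-free hypothesis that forces these two linear orders to be compatible, i.e., that makes $\preceq$ total. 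In the standard textbook proofs this is where the second forbidden poset enters, not later in a greedy step.

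This matters for the rest of your argument in two ways. First, without totality of $\preceq$ your enumeration $j_1\preceq\cdots\preceq j_n$ is not available. Second, once totality of $\preceq$ is in hand (using both hypotheses), the ``main obstacle'' you anticipate largely dissolves: if $t,t'<s$ with $j_t \RR j_s$ and $j_{t'}$ incomparable to $j_s$, then $j_{t'}\preceq j_t$ would force $j_s\in\{k\st j_t\RR k\}\subseteq\{k\st j_{t'}\RR k\}$, a contradiction; so $j_t\prec j_{t'}$ strictly, and an easy induction keeping $f$ monotone along the trace finishes the job without a separate $\underline{3}+\underline{1}$ case analysis. As written, though, your proposal leaves that nonemptiness check at the level of ``I expect\ldots a case analysis will show\ldots'', which is not yet a proof. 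Relocating the use of the $\underline{3}+\underline{1}$ hypothesis to the totality of the trace repairs both issues simultaneously.
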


Obviously, any strict weak order is a semiorder, any semiorder is an interval order, and any interval order is a partial order.

We now move on to convex polytopes.  A detailed treatment of the subject can be found for example in \cite{Ziegler_1998}.
A \textsl{convex polytope} in some space $\R^d$ is the convex hull of a finite set of points.  For $X$ a finite subset of $\R^d$, let $P$ be the polytope
$$
P =  \conv \left(\{ x_v \st v \in V \} \right).
$$ 
The \textsl{dimension} $\dim(P)$ of $P$ is the dimension of its affine hull (notice that, except otherwise mentioned, $\dim$ designates the affine dimension). A linear inequality on $\R^d$,
$$
\sum_{i=1}^d \alpha_i x_i \leqslant \beta,
$$
is \textsl{valid} for $P$ if it is satisfied by all points of $P$.  A \textsl{face} of $P$ is the subset of points of $P$ satisfying a given valid inequality with equality (thus $\es$ and $P$ are faces of $P$); then the inequality \textsl{defines} the face. The faces of $P$ are themselves polytopes.  The \textsl{vertices} of $P$ are the points $v$ such that $\{v\}$ is a face of dimension $0$; the \textsl{facets} are the faces of dimension $\dim(P)-1$.  A valid inequality is \textsl{facet-defining} (or a \textsl{FDI}) if it defines a facet of $P$.  The importance of the latter concept is clear from the following result.  Assume that the polytope $P$ is \textsl{full}, that is, of dimension $d$.  Then $P$ equals the set of solutions to all of its facet-defining inequalities; moreover, any system of linear inequalities on $\R^d$ whose set of solutions equals $P$ necessarily contains all of the facet-defining inequalities.

The five polytopes we consider in the paper are defined in the space $\R^{\Arcs{n}}$, where as before $\Arcs{n} = \{(i,j) \st  i,j \in \Vset{n}, i \neq j\}$.
The next lemma recalls, in this setting, a classical result on valid inequalities.  Notice how, for a vector $x \in \R^{\Arcs{n}}$ and $(i,j) \in \Arcs{n}$, we abbreviate $x_{(i,j)}$ into $x_{ij}$. 

\begin{lemm}\label{lem_1} 
Suppose that the two inequalities 
\begin{equation}\label{eq_proper_face}
\sum_{(i,j) \in \Arcs{n}} \alpha_{ij} x_{ij} \leqslant \beta
\end{equation}
and
\begin{equation}\label{eq_proper_face_bis}
\sum_{(i,j) \in \Arcs{n}} \alpha_{ij}' x_{ij} \leqslant \beta'
\end{equation}
define distinct, proper faces of some polytope $P$ in $\R^{\Arcs{n}}$.
Then their sum  
\begin{equation}\label{eq_3}
\sum_{(i,j) \in \Arcs{n}} (\alpha_{ij} + \alpha_{ij}') x_{ij} \leqslant \beta + \beta'
\end{equation}
cannot be facet-defining for $P$.

\end{lemm}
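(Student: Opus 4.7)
\medskip

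\noindent\textbf{Plan of proof.} The plan is to show that the face defined by Inequality~\eqref{eq_3} is exactly the intersection of the faces $F_1$ and $F_2$ defined by Inequalities~\eqref{eq_proper_face} and~\eqref{eq_proper_face_bis} respectively, and then to argue on dimensions that this intersection cannot be a facet.

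First, I would observe that Inequality~\eqref{eq_3} is valid for $P$, since it is the termwise sum of two valid inequalities.  Denoting by $F_3$ the face of $P$ it defines, I claim $F_3 = F_1 \cap F_2$.  Indeed, a point $x \in P$ satisfies Inequality~\eqref{eq_3} with equality if and only if both summands attain their respective upper bounds: the inequality $\sum \alpha_{ij} x_{ij} \leqslant \beta$ prevents the first summand from exceeding $\beta$, and similarly for the second.  Thus equality in~\eqref{eq_3} forces equality in both~\eqref{eq_proper_face} and~\eqref{eq_proper_face_bis}, and the converse is immediate.

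Next, I would argue by contradiction: suppose $F_3$ is a facet, so $\dim F_3 = \dim(P) - 1$.  Since $F_3 = F_1 \cap F_2 \subseteq F_1$ and $F_1$ is a proper face (so $\dim F_1 \leqslant \dim(P) - 1$), we infer $\dim F_1 = \dim(P) - 1$, meaning $F_1$ is itself a facet.  By symmetry $F_2$ is also a facet.  Now $F_3 \subseteq F_1$ with both $F_3$ and $F_1$ of dimension $\dim(P) - 1$ forces $\mathrm{aff}(F_3) = \mathrm{aff}(F_1)$; since a facet of $P$ coincides with the intersection of $P$ with its affine hull, this gives $F_3 = F_1$.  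The same reasoning yields $F_3 = F_2$, whence $F_1 = F_2$, contradicting the hypothesis that the two faces are distinct.

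The only mild obstacle is recognizing in step~1 that the definedness of $F_3$ as $F_1 \cap F_2$ relies crucially on the validity of each separate inequality; everything afterwards is a routine dimension argument on the face lattice of $P$, and no combinatorial feature specific to order polytopes is used.  The lemma therefore holds for arbitrary polytopes in $\R^{\Arcs{n}}$.
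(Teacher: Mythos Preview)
Your proof is correct and is the standard argument for this classical fact.  The paper itself does not supply a proof: the lemma is introduced as recalling ``a classical result on valid inequalities'' and is stated without demonstration, so there is nothing in the paper to compare against beyond noting that your argument (show the sum defines the face $F_1\cap F_2$, then use a dimension count in the face lattice to force $F_1=F_2$) is exactly the expected one.
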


A relation $\RR$ on $\Vset{n}$ is represented in $\R^{\Arcs{n}}$ by its \textsl{characteristic vector} $\chi^R$, with $\chi^{\RR}_{ij} = 1$ if $i \RR j$, and $\chi^R_{ij} = 0$ otherwise.  
The \textsl{semiorder polytope} $\pso n$ on $\Vset{n}$ is defined in 
$\R^{\Arcs{n}}$ by
$$
\pso n =  \conv \left(\left\{ \chi^{\RR}  \st \RR \text{ is a semiorder on } \Vset{n} \right\} \right).
$$
The definitions of the \textsl{linear ordering polytope} $\plo n$,  \textsl{strict weak order polytope} $\pswo n$, \textsl{interval order polytope} $\pio n$, and \textsl{partial order polytope} $\ppo n$  are similar.  Of course 
$$
\plo n \subseteq \pswo n \subseteq \pso n \subseteq \pio n \subseteq \ppo n,
$$
all inclusions being strict when $n \geqslant 4$.
It is easy to see that 
\begin{equation}\label{eqn_dim}
 \dim(\pswo n) = \dim(\pso n) = \dim(\pio n) = \dim(\ppo n) = n(n-1)
\end{equation}
(so that the four polytopes are full), and it can be shown \citep[see for instance][]{Grotschel_Junger_Reinelt_1985a} that 
\begin{equation}\label{eqn_dim_plo}
\dim(\plo n) = \frac{n(n-1)}{2}.
\end{equation}

Remember from our Introduction that characterizing the binary choice probabilities predicted by five models surveyed in \cite{Regenwetter_Davis_Stober_2008, Regenwetter_Davis_Stober_2011, Marley_Regenwetter_2016} amount to listing all of the facet-defining inequalities of the five polytopes $\ppo n$, $\pio n$, $\pso n$, $\pswo n$, and $\plo n$.  The next section explains why the task appears to be much difficult.

%%%%%%%%%%%%%%%%%%%%%%%%%%%%%%%%%%%%%%%%%%%%%%%%%%%%%%%%%%%%%%%%%%%%%%
\section{The difficulties in finding all FDI's}
\label{sect_diff}

We briefly indicate why finding out all the FDI's of any of the five order polytopes may look as a hopeless task.  The (traditional) trick is to convert some NP-hard combinatorial problem into the optimization problem of a linear form on the polytope.  Among the many possible optimization problems we could select, there are five similar problems which apply in the same way to our five polytopes.  All problems require the construction of a `median order' for a given collection of relations.  We refer the reader to \cite{Hudry_2004,Hudry_2008,Hudry_2012,Hudry_2015} and their references for background on median orders, and also for the results we apply here.

For any family $\RRR$ of relations on $\Vset{n}$ and any relation $P$ on $\Vset{n}$, the \textsl{$\PPP$-remoteness} of $P$ to $\RRR$ equals
\begin{equation}\label{eqn_remoteness}
\rho(P,\RRR) \;=\; \sum_{R \in \RRR} |P \Delta R|,
\end{equation}
where $P \Delta R = (P\setminus R) \cup (R \setminus P)$ is the \textsl{symmetric difference} of $P$ and $R$.  Thus $\rho(P,\RRR)$ counts the total number of disagreements between $P$ and the relations in $\RRR$.
In the next problem, the aim is to minimize the remoteness of an order relation of fixed type to a given family $\RRR$ of relations.  Let     $\PPP$ designate one of the families of linear orders, strict weak orders, semiorders, interval orders of partial orders on $\Vset n$.
 
\begin{prob}[\texttt{$\PPP$-Median Order}]
Given a family $\RRR$ of relations on $\Vset{n}$, find a relation $P$ in $\PPP$ such that $\rho(P,\RRR) \leqslant \rho(Q,\RRR)$ for all $Q$ in $\PPP$.
\end{prob}
Such an order $P$ is \textsl{$\PPP$-median} for the collection $\RRR$ (the notion of a median order plays a role in the aggregation of preferences and in voting theory).

To restate the problem, denote by $\pany{n}$ the corresponding polytope (that is, $\plo n$, $\pwo n$, $\pso n$, $\pio n$ or $\ppo n$) and define from $\RRR$ a vector $c$ in $\R^{\Arcs{n}}$ by
\begin{equation}
c_{ij} \;=\; |\{R\in\RRR \st (i,j)\in R\}| - |\{Q\in\RRR \st (i,j)\notin Q\}|.
\end{equation}
Then 
\begin{equation}\label{eqn_rho}
\rho(P,\RRR) \quad=\quad \sum_{ij\in\Arcs{n}} c_{ij} \chi^P_{ij}
\; - \;
\sum_{R\in\RRR} |R|.
\end{equation}
Because the last sum in Equation~\eqref{eqn_rho} gives a constant depending only on $\RRR$, we see that Problem~\texttt{$\PPP$-Median Order} is equivalent to the minimization of the linear form $x \mapsto c\,x$ (the scalar product of $c$ and $x$) on the polytope $\pany{n}$ (such a reformulation of the problem is well known).

Now it happens that Problem~\texttt{$\PPP$-Median Order} is NP-hard for each choice of $\PPP$ as one of our five collections of order relations---it is even the case under rather strong restrictions on $\RRR$, for instance about the size of $\RRR$ or the type of relations in $\RRR$; \cite{Hudry_2004,Hudry_2015} and \cite{Hudry_Monjardet_2010} provide a wealth of results in this line.  It suffices here to record that Problem~\texttt{$\PPP$-Median Order} is NP-hard and moreover reformulable as a linear programming problem having $\pany{n}$ as its feasible set.  
The following conclusion follows: if a polynomial-size description of $\pany{n}$ existed, the equality $\texttt{P}= \texttt{NP}$ would follow (answering in a surprising way a famous question in complexity theory, \citealp{Garey_Johnson_1979}).  Hence, any linear decription of $\pany{n}$ must be intractable in the technical sense (that is, be of non-polynomial size in $n$).  Of course, the conclusion we just reached does not preclude the existence of an exponential-size, linear description of $\pany{n}$, even one with a nice mathematical structure.

\begin{table}[h]
\begin{equation*}\begin{array}{|c||r|*{6}{r|}}
\hline
n & \multicolumn{2}{c|}{\ppo n} & \multicolumn{2}{c|}{\pio n} & \multicolumn{2}{c|}{\pso n} \\
\hline\hline
2&      3 &  2 &   3 &  2 &   3 &  2 \\
3&     17 &  4 &  17 &  4 &  17 &  4 \\
4&    128 &  8 & 191 & 14 & 563 & 31 \\
5& \geqslant43244 & \geqslant 211  &  &&& \\
\hline
\end{array}
\end{equation*}
\caption{Numbers of FDI's, in total or up to element relabellings, for three order polytopes when, to our knowledge, they are available.}
\label{tab_small_n}
\end{table}

For `small' values of $n$, computers can produce a linear description of $\pany{n}$ (running for instance the software \texttt{porta} \citealp{Christof_porta}).  Tables~\ref{tab_small_n} and \ref{tab_small_n_bis} indicate for which values of $n$, to our knowledge, a description was published.
The cells record, respectively, the total number of FDI's, and the number of their equivalence classes under relabellings of the elements in $\Vset n$.  The values come from
\begin{enumerate}[\quad-~]
\item \cite{Fiorini_thesis} for the partial order polytope $\ppo n$;
\item \cite{Regenwetter_Davis_Stober_2011} for the interval order polytope $\pio n$ and for the semiorder polytope $\pso n$; 
\item \cite{Fiorini_thesis}, \cite{Fiorini_Fishburn_2004} and \cite{Regenwetter_Davis_Stober_2012} for the (strict) weak order polytope $\pswo n$;
\item \cite{Christof_Reinelt_1996} and \cite{Marti_Reinelt_2011} for  the linear ordering polytope $\plo n$.
\end{enumerate}
The symbol ``$\geqslant$'' indicates that the number provided is only a lower bound, the exact value being unknown to us (in many cases, \texttt{porta} was reported to run out of computer resources).

\begin{table}[h]
\begin{equation*}
\begin{array}{|c||r|*{4}{r|}}
\hline
n & \multicolumn{2}{c|}{\pswo n} & \multicolumn{2}{c|}{\quad\plo n}\\[2mm]
\hline\hline
2&   3 &  2 &      2 & 1\\
3&  15 &  3 &      8 & 2\\
4& 106 &  9 &     20 & 2\\
5& 75\,843 & \geqslant318  &  40 & 2\\
6&     &    &    910 & 5\\
7&     &    & 87\,472 & 27\\
8&     &    & \geqslant488\,602\,996 & \geqslant12\,231\\
\hline
\end{array}
\end{equation*}
\caption{Numbers of FDI's, in total or up to element relabellings, for the strict weak order polytope and the linear ordering order polytope when, to our knowledge, they are available.}
\label{tab_small_n_bis}
\end{table}

The difficulty of finding a complete linear description of any of the five polytopes led us to investigate their FDI's under some restriction on the coefficients.  This is why we focus from now on  primary linear inequalities.

%%%%%%%%%%%%%%%%%%%%%%%%%%%%%%%%%%%%%%%%%%%%%%%%%%%%%%%%%%%%%%%%%%%%%%
\section{General Conditions on the Validity of  Primary Linear Inequalities}\label{gen_fact}

A primary linear inequality on $\R^{\Arcs{n}}$ takes the form
\begin{equation}\label{eqn_rewrite_A_B}
\sum_{a \in A} x_{a} - \sum_{b \in B}x_b \leqslant \beta
\end{equation}
where $\beta\in\{-1,0,1\}$ and $A$, $B$ are two disjoint subsets of $\Arcs{n}$.  From now on, we assume that at least one of $A$ and $B$ is nonempty when we consider Equation~\eqref{eqn_rewrite_A_B}.  Throughout the whole paper, the notation $A$, $B$ and $\beta$ asssume the latter condition (additional assumptions come at the end of the section).  
It is useful to think of $(\Vset{n}, A)$, $(\Vset{n}, B)$ and $(\Vset{n}, A \cup B)$ as graphs\footnote{If not explicitly said otherwise, any graph here is directed, without loops or multiple arcs.}.  When convenient, we will represent the graphs (or parts of them) using solid arcs for the pairs in $A$ and dashed arcs for the pairs in $B$.
Figure~\ref{fig_four_ineq} displays such graphical representations for the five inequalities in Proposition~\ref{prop_five_inequalities}, which provide examples of FDI's for our polytopes. 

\begin{figure}
\begin{center}
\begin{tikzpicture}[scale=1]
  \tikzstyle{vertex}=[circle,draw,fill=white, scale=0.3]
%------ inŽgalitŽ -x_{ij} \leqslant 0
\begin{scope}
 \node (1) at (0,0) [vertex,label=below:$i$] {};
 \node (2) at (0,1) [vertex,label=above:$j$] {};
% \draw (1.west) node  {$i$};     %[above]
% \draw (2.west) node  {$j$};
 %\draw (2.south) node [below] {$j$};
  \draw[->-=.7,dashed] (1) to (2);
 \node at (0,-1) {$\leqslant 0$};
\end{scope}
  
%------ inegalitŽ  x_{ij} + x_{ji} \leqslant 1 ----------
\begin{scope}[xshift=2.5cm]  
  \node (2-1) at (0,0) [vertex,label=below:$i$] {};
  \node (2-2) at (0,1) [vertex,label=above:$j$] {};
 \draw[->-=.7,>= triangle 45] (2-1) to [bend right=25] (2-2);
 \draw[->-=.7,>= triangle 45] (2-2) to [bend right=25] (2-1);
   \node at (0,-1) {$\leqslant 1$};
\end{scope}

%------ inegalitŽ  x_{ij} + x_{jk} - x_{ik} \leqslant 1 ----------
\begin{scope}[xshift=5cm,yshift=-3mm]
  \node (3-i) at (0,0) [vertex,label=below:$i$] {};
  \node (3-j) at (-0.9,1) [vertex,label=left:$j$] {};
  \node (3-k) at (0,2) [vertex,label=above:$k$] {};
 \draw[->-=.7,>= open triangle 45] (3-i) to (3-j);
 \draw[->-=.7,>= open triangle 45] (3-j) to (3-k);
 \draw[->-=.7,>= triangle 45,dashed] (3-i) to (3-k);
   \node at (-0.5,-1) {$\leqslant 1$};
\end{scope}

%------ inegalitŽ  des deux cycles ----------
\begin{scope}[xshift=7.5cm,yshift=-3mm]
  \node (4-i) at (1,0) [vertex,label=below:$i$] {};
  \node (4-j) at (0,2) [vertex,label=above:$j$] {};
  \node (4-k) at (-1,0) [vertex,label=below:$k$] {};
  
 \draw[->-=.7,>= open triangle 45,bend right=15] (4-i) to (4-j);
 \draw[->-=.7,>= open triangle 45,bend right=15] (4-j) to (4-k);
 \draw[->-=.7,>= triangle 45,bend right=15] (4-k) to (4-i);
 
\draw[->-=.7,>= open triangle 45,dashed,bend right=15] (4-i) to (4-k);
\draw[->-=.7,>= open triangle 45,dashed,bend right=15] (4-k) to (4-j);
\draw[->-=.7,>= triangle 45,dashed,bend right=15] (4-j) to (4-i);
   \node at (0,-1) {$\leqslant 1$};
\end{scope}

%------ inegalitŽ  ˆ 4 elts ----------
\begin{scope}[xshift=10cm,yshift=-3mm]
  \node (5-i) at (0,0) [vertex,label=below:$i$] {};
  \node (5-j) at (0,2) [vertex,label=above:$j$] {};
  \node (5-k) at (2,0) [vertex,label=below:$k$] {};
  \node (5-l) at (2,2) [vertex,label=above:$l$] {};
  
 \draw[->-=.7,>= open triangle 45] (5-i) to (5-j);
 \draw[->-=.7,>= open triangle 45] (5-k) to (5-l);
 
\draw[->-=.7,>= open triangle 45,dashed] (5-i) to (5-l);
\draw[->-=.7,>= open triangle 45,dashed] (5-k) to (5-j);
   \node at (1,-1) {$\leqslant 1$};
\end{scope}
    
\end{tikzpicture}
\end{center}
\caption{Graphical representations of the five inequalities in Proposition~\ref{prop_five_inequalities}. The arcs in $A$ are solid, those in $B$ dashed.}
\label{fig_four_ineq}
\end{figure}
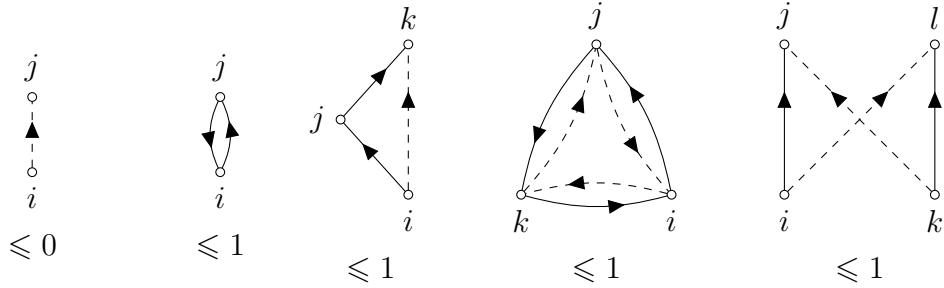 

\begin{prop}\label{prop_five_inequalities}
Consider the five primary linear inequalities
\begin{align}
-x_{ij} \;\leqslant&\; 0,\label{eqn_nonneg}
\\ x_{ij} + x_{ji} \;\leqslant&\; 1,\label{eqn_2}
\\ x_{ij} + x_{jk} - x_{ik} \;\leqslant&\; 1,\label{eqn_3}
\\ x_{ij} + x_{jk} + x_{ki} - x_{ji} - x_{kj} - x_{ik} \;\leqslant&\; 1;\label{eqn_4}
\\x_{ij} + x_{kl} - x_{il} - x_{kj} \;\leqslant&\; 1,\label{eqn_5}
\end{align}
where $i$, $j$, $k$ and $l$ are distinct elements in $\Vset{n}$.  
Table~\ref{table_facets} indicates for which order polytopes they define facets. 
\begin{table}[ht]
\renewcommand{\arraystretch}{1.25}
\caption{Checkmarks indicate when the five linear inequalities define facets of the five order polytopes.\label{table_facets}} 
\begin{center}\begin{tabular}{
c@{\quad}|@{\quad}
c@{\quad}|@{\quad}
c@{\quad}|@{\quad}
c@{\quad}|@{\quad}
c@{\quad}|@{\quad}
c
}
Equation & $\ppo n$ & $\pio n$ & $\pso n$ & $\pswo n$ & $\plo n$ \\
\hline
\eqref{eqn_nonneg} & $\checkmark$ & $\checkmark$ & $\checkmark$ & $\checkmark$ & $\checkmark$ \\
\eqref{eqn_2} & $\checkmark$ & $\checkmark$ & $\checkmark$ & $\checkmark$ &     \\
\eqref{eqn_3} & $\checkmark$ & $\checkmark$ & $\checkmark$ & $\checkmark$ & $\checkmark$ \\
\eqref{eqn_4} & $\checkmark$ & $\checkmark$ & $\checkmark$ & & $\checkmark$ \\
\eqref{eqn_5} &     & $\checkmark$ & $\checkmark$ & & $\checkmark$\\
\end{tabular}
\end{center}

\end{table}
\end{prop}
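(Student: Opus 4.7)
The argument splits into three tasks: validity of each inequality on each row of Table~\ref{table_facets}, facet-definingness at each checkmark, and justifying the absence of a checkmark at each blank. Validity is immediate: \eqref{eqn_nonneg} because characteristic vectors are $\{0,1\}$-valued, \eqref{eqn_2} by asymmetry, \eqref{eqn_3} by transitivity. For \eqref{eqn_4}, a brief case analysis on the restriction of a partial order to $\{i,j,k\}$ shows that the antisymmetrized sum $(x_{ij}-x_{ji})+(x_{jk}-x_{kj})+(x_{ki}-x_{ik})$ never exceeds $1$. For \eqref{eqn_5}, the configuration $x_{ij}=x_{kl}=1$ with $x_{il}=x_{kj}=0$ is exactly the forbidden $\underline 2+\underline 2$, so \eqref{eqn_5} is valid on $\pio n$ (hence on $\pso n$, $\pswo n$, $\plo n$) but fails on $\ppo n$ since the partial order $\{(i,j),(k,l)\}$ gives LHS $=2$, which already accounts for the first blank on the bottom row.

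For each checkmarked entry, the plan is to exhibit $\dim(\pany n)$ affinely independent characteristic vectors in the relevant class that saturate the inequality; for the four full polytopes this means $n(n-1)$ vectors, and for $\plo n$ it means $n(n-1)/2$. A convenient template: first fix a base relation on the three or four elements in the support that saturates the inequality (the chain $i>j>k$ for~\eqref{eqn_3}, the cyclic pattern on $\{i,j,k\}$ for~\eqref{eqn_4}, or the above/below pattern on $\{i,j,k,l\}$ for~\eqref{eqn_5}); next, extend it to $\Vset n$ by placing the remaining $n-3$ or $n-4$ elements in all class-compatible ways; finally, perform single-arc toggles on disjoint supports, together with transitive-closure repairs chosen to keep the perturbed relation inside the given class. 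For $\plo n$ the count is handled more directly through all linear extensions of the saturation pattern.

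Three blanks remain to be explained. Inequality \eqref{eqn_2} is not a facet of $\plo n$ because every linear order contains exactly one of $(i,j)$ and $(j,i)$, so \eqref{eqn_2} holds with equality throughout $\plo n$ and belongs to its affine hull. For each of the two remaining blanks in the $\pswo n$ column, the plan is to write the inequality as the sum of two valid inequalities defining distinct proper faces of $\pswo n$ and invoke Lemma~\ref{lem_1}. Concretely, \eqref{eqn_4} splits as
\[
\bigl(x_{ij}+x_{jk}-x_{ik}\bigr)\;+\;\bigl(x_{ki}-x_{kj}-x_{ji}\bigr)\;\leqslant\;1+0,
\]
the first summand being transitivity and the second the contrapositive of negative transitivity applied to $(k,i)$; and \eqref{eqn_5} splits as
\[
\bigl(x_{ij}+x_{kl}-x_{il}-x_{kj}-x_{ik}-x_{ki}\bigr)\;+\;\bigl(x_{ik}+x_{ki}\bigr)\;\leqslant\;0+1,
\]
the first part following from two applications of negative transitivity (one to $(i,j)$, one to $(k,l)$) and the second from asymmetry. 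In each case one checks that both summands define faces that are both proper and distinct, so Lemma~\ref{lem_1} indeed rules out facet-definingness.

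The bulk of the work lies in establishing facet-definingness for the semiorder polytope $\pso n$: toggling a single arc in a semiorder typically creates a forbidden $\underline 2+\underline 2$ or $\underline 3+\underline 1$ and therefore requires compensating modifications elsewhere, so producing $n(n-1)$ affinely independent saturating semiorders by hand for each of the relevant inequalities quickly becomes error-prone. This is presumably what motivates the paper's subsequent lifting apparatus (Sections~\ref{sect_Lifting_Lemma}--\ref{sec_FDI_PSO}), which I would expect to shortcut the semiorder half of the table.
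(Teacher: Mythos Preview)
Your approach differs fundamentally from the paper's: the paper's proof of Proposition~\ref{prop_five_inequalities} is essentially a list of citations (Suck, M\"uller, M\"uller--Schulz, Fiorini, Fiorini--Fishburn, Gr\"otschel--J\"unger--Reinelt, Regenwetter--Davis-Stober), with the semiorder case explicitly deferred to the Lifting Lemma (Theorem~\ref{theo_lifting}) later in the paper. You instead attempt a self-contained argument.

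Your treatment of validity and of the blanks is correct and strictly more informative than what the paper offers. The decompositions you give for the two $\pswo n$ blanks via Lemma~\ref{lem_1} are sound; one must still verify that each summand defines a \emph{proper} face and that the two faces are \emph{distinct}, but this is routine. For~\eqref{eqn_4}, the strict weak order with $j$ below $i\sim k$ makes $x_{ki}-x_{kj}-x_{ji}=-1<0$, so that summand's face is proper, and the chain $i<j<k$ lies in the transitivity face but not in it, so the faces differ. For~\eqref{eqn_5}, the strict weak order with $i$ below $j\sim k\sim l$ gives $x_{ij}+x_{kl}-x_{il}-x_{kj}-x_{ik}-x_{ki}=-1<0$, and that order lies in the asymmetry face $x_{ik}+x_{ki}\leqslant 1$, so again proper and distinct.

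The gap, which you flag yourself, is that the facet-definingness direction is only a plan. The template you describe---a base saturating relation on the support, extended by placing the remaining elements freely, then single-arc toggles with class-preserving repairs---is the right idea and is how the cited sources proceed, but it is not executed here, and for $\pso n$ the repairs are genuinely delicate. The paper sidesteps this entirely by citation and, for $\pso n$, by invoking Theorem~\ref{theo_lifting} exactly as you anticipate in your last paragraph. So your proposal is best read as a correct outline that supplies the validity and non-facet arguments explicitly, while leaving the checkmarks where the paper also leaves them---to the literature and to Section~\ref{sect_Lifting_Lemma}.
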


\begin{proof}
\cite{Suck_1995} (unpublished) establishes many of the results---for $\pso n$, we repeat his argument in Section~\ref{sect_Lifting_Lemma}.  
For $\ppo n$, $\pio n$ and $\plo n$, proofs appear in respectively \cite{Muller_1996}, \cite{Muller_Schulz_1995} and \cite{Fiorini_thesis}.
For $\pso n$ the results are consequences of findings of \cite{Regenwetter_Davis_Stober_2011} and Theorem~\ref{theo_lifting} below.
For $\pswo n$, the assertions follow from results in \cite{Fiorini_Fishburn_2004} on the weak order polytope (the relationship between the latter polytope and the strict weak order polytope is described in Section~\ref{sect_Primary_FDI_PWO}).  For $\plo n$, see for instance \cite{Grotschel_Junger_Reinelt_1985a}.
\end{proof}

We call \textsl{nonnegativity inequality} the inequality in Equation~\eqref{eqn_nonneg}.

Our main contribution is to characterize the primary linear inequalities that define facets of the polytopes $\ppo n$, $\pio n$ and $\pso n$.
For the rest of this section, let $P_n$ be one of the latter three polytopes (we consider $\pswo n$ and $\plo n$ in Sections~\ref{sect_Primary_FDI_PWO} and \ref{sect_Primary_FDI_PLO}, respectively).  
The following four conditions on the sets $A$ and $B$ are most useful in our characterization of valid or facet-defining, primary inequalities for $P_n$ in the form of Inequality~\eqref{eqn_rewrite_A_B}.  Figure~\ref{fig_C2C3C4} illustrates Conditions~C2, C3 and C4.\\

\noindent \textbf{Condition~C1}: For any distinct pairs $(i,j)$, $(k,l)$ in $A$, there holds $i \neq k$ and $j \neq l$.\\

\noindent \textbf{Condition~C2}: For any $(i,j),$ $(k,l)$ in $A$ with $i$, $j$, $k$, $l$ distinct elements, there holds $(i,l) \in B$.\\

\noindent \textbf{Condition~C3}:  For any pairs $(i,j)$, $(j,k)$ in $A$ with $i$, $j$, $k$ distinct elements, there holds $(i,k)\in B$.\\

\noindent  \textbf{Condition~C4}:  For any pairs $(i,j)$, $(j,k)$ in $A$ with $i$, $j$, $k$ distinct elements, either there holds $(i,k) \in B$ or there exists some element $p$ in $\Vset{n}\setminus\{i,j,k\}$ such that $(i,p),(p,k)\in B$.\\

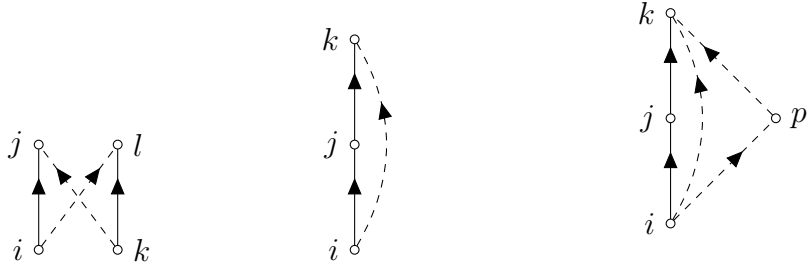
\begin{figure}
\begin{center}
\begin{tikzpicture}[scale=0.7]
  \tikzstyle{vertex}=[circle,draw,fill=white, scale=0.3]
%------ inŽgalitŽ x_{ij} + x_{kl} - x_{il} - x_{kj} \leqslant 1 
\begin{scope}
  \node (z_l) at (1.5,2) [vertex] {};
  \node (z_k) at (1.5,0) [vertex] {};
  \node (z_j) at (0,2) [vertex] {};
  \node (z_i) at (0,0) [vertex] {};
  
  \draw[->-=.7] (z_i) to (z_j);
  \draw[->-=.7] (z_k) to (z_l);
  
   \draw[->-=.8, dashed] (z_i) to (z_l);
   \draw[->-=.8, dashed] (z_k) to (z_j);
   
   \draw (z_i.west) node [left] {$i$};
   \draw (z_j.west) node [left] {$j$};
   \draw (z_k.east) node [right] {$k$};
   \draw (z_l.east) node [right] {$l$};
\end{scope}
  
%------ inegalitŽ  x_{ij} + x_{jk} - x_{ik} \leqslant 1 ----------

\begin{scope}[xshift=6cm]  
  \node (u_k) at (0,4) [vertex] {};
  \node (u_j) at (0,2) [vertex] {};
  \node (u_i) at (0,0) [vertex] {};
  
  \draw[->-=.7] (u_i) to (u_j);
  \draw[->-=.7,] (u_j) to (u_k);
  \draw[->-=.7, dashed] (u_i) to [bend right] (u_k);
  
   \draw (u_i.west) node [left] {$i$};
   \draw (u_j.west) node [left] {$j$};
    \draw (u_k.west) node [left] {$k$};
\end{scope}

\begin{scope}[xshift=12cm,yshift=0.5cm]
  \node (i) at (0,0) [vertex,label=left:$i$] {};
  \node (j) at (0,2) [vertex,label=left:{$j$}] {};
  \node (k) at (0,4) [vertex,label=left:$k$] {};
  \draw[->-=.7] (i) -- (j);
  \draw[->-=.7] (j) -- (k);
  \draw[->-=.7, dashed] (i) to [bend right] (k);

  \node (p) at (2,2) [vertex,label=right:$p$] {};
  \draw[->-=.7, dashed] (i) -- (p);
  \draw[->-=.7, dashed] (p) -- (k);
\end{scope}
    
\end{tikzpicture}
\end{center}
\caption{Illustrations of Conditions~C2, C3 and C4 on subsets $A$, $B$ of $\Arcs{n}$. The solid arcs are in $A$, the dashed ones in $B$.}
\label{fig_C2C3C4}
\end{figure}

Note that Condition~C1 exactly means that the graph is a disjoint union of isolated vertices, directed paths and directed cycles (where ``disjoint union'' means ``no two constituents have a vertex in common'').  We call \textsl{PC-graph} any graph of this type.  In Condition~C2, $(k,j) \in B$ also follows from the assumption (by a renaming of the elements).

\begin{theo}\label{thm_valid_P}
If Inequality~\eqref{eqn_rewrite_A_B} is valid for $P_n$, we have:
\begin{enumerate}[\quad\rm 1.~]
\item $\beta \geqslant 0$; if $A\neq\es$, then $\beta = 1$;
\item the set $A$ satisfies Condition~C1;
\item the sets $A$ and $B$ satisfy Condition~C2;
\item if $P_n = \ppo n$ or $P_n = \pio n$, then $A$ and $B$ satisfy Condition~C3;
\item if $P_n = \pso n$, then $A$ and $B$ satisfy Condition~C4.
\end{enumerate}
\end{theo}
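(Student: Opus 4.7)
The plan is to establish each of the five items in order by producing, whenever the relevant condition fails, a specific relation in $\PPP$ whose characteristic vector violates Inequality~\eqref{eqn_rewrite_A_B}. For Item~1, the empty relation lies in every $\PPP$ we consider and gives LHS $=0$, so $\beta\geqslant 0$; taking $R=\{(i,j)\}$ for some $(i,j)\in A\neq\es$ (a single arc is trivially a partial order, interval order, and semiorder) then forces $\beta\geqslant 1$, hence $\beta=1$. For Item~2 (Condition~C1), suppose two distinct $A$-arcs share a source, say $(i,j),(i,l)\in A$ with $j\neq l$ (the shared-target case is symmetric). The relation $R=\{(i,j),(i,l)\}$ is a semiorder (its two arcs share $i$, so no induced $\underline{2}+\underline{2}$; the longest chain has two elements, so no induced $\underline{3}+\underline{1}$), and $\chi^R$ yields LHS $\geqslant 2>\beta$.

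For Item~3 (Condition~C2), suppose $(i,j),(k,l)\in A$ with $i,j,k,l$ distinct and $(i,l)\notin B$. Set $R=\{(i,j),(k,l),(i,l)\}$; this is vacuously transitive and asymmetric, the arc $(i,l)$ destroys the only candidate for induced $\underline{2}+\underline{2}$, and the longest chain in $R$ has two elements so no $\underline{3}+\underline{1}$ appears. Hence $\chi^R\in\ppo n\cap\pio n\cap\pso n$, and since Item~2 excludes $(i,l)\in A$ we get LHS $\geqslant 2$. For Item~4 (Condition~C3 for $\ppo n$ and $\pio n$), suppose $(i,j),(j,k)\in A$ with $(i,k)\notin B$; the chain $R=\{(i,j),(j,k),(i,k)\}$ is a partial order and an interval order, so $\chi^R\in\ppo n\cap\pio n$ and LHS $\geqslant 2$. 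Crucially, for $n\geqslant 4$ this same $R$ induces $\underline{3}+\underline{1}$ on $\{i,j,k,q\}$ for any $q\notin\{i,j,k\}$, so it is \emph{not} a semiorder; this is precisely why Item~5 needs a more delicate argument and only the weaker Condition~C4 survives for $\pso n$.

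Item~5 is where the main work lies. The starting observation, via the Scott--Suppes Theorem, is that the semiorders on $\{i,j,k,p\}$ extending the chain $i<j<k$ come in exactly seven types, and each of them puts at least one of the arcs $(i,p),(p,k)$ into $R$---otherwise $p$ would be incomparable with all of $i,j,k$ and yield an induced $\underline{3}+\underline{1}$. Assume for contradiction that C4 fails, so $(i,k)\notin B$ and no $p\in\Vset n\setminus\{i,j,k\}$ has both $(i,p),(p,k)\in B$. Then for each $p$ at least one of $(i,p),(p,k)$ lies outside $B$, and among the seven types there is one forcing only that arc into $R$. Assembling these choices over all $p$, using a unit interval representation with $f(i)=0$, $f(j)=2$, $f(k)=4$ and each $p$ placed at level $1$ or $3$ accordingly, one aims at a semiorder $R$ with $|A\cap R|\geqslant 2$ and $|B\cap R|=0$, giving LHS $\geqslant 2$ and contradicting validity. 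The main obstacle I anticipate is that when some $p$'s end up at level $1$ and others at level $3$, forced arcs $(p,q)$ between them enter $R$ and may themselves belong to $B$, spoiling the count; overcoming this should go through the lifting machinery of Theorem~\ref{theo_lifting}, which reduces the question to $\pso 4$ where the seven-type enumeration immediately forces both $(i,p)\in B$ and $(p,k)\in B$ for the unique extra element.
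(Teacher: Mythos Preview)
Your treatment of Items~1--4 is correct and essentially identical to the paper's proof.

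For Item~5 there is a genuine gap. Your global construction (placing each $p$ at level $1$ or $3$ in a unit representation) does force arcs $(q,p)$ between the extra elements, and these can lie in $B$; you correctly flag this obstacle, but your proposed fix is wrong. The Lifting Lemma (Theorem~\ref{theo_lifting}) goes the other way: it takes an inequality on $\R^{\Arcs{n}}$ and pads it with zeros to get one on $\R^{\Arcs{n+1}}$. It does \emph{not} let you restrict the given inequality (whose sets $A$, $B$ live in $\Arcs n$) to four elements, because $B$ may contain pairs outside $\{i,j,k,p\}$ that you would simply discard. So invoking Theorem~\ref{theo_lifting} here does nothing.

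The paper avoids the obstacle entirely by a different, simpler construction. For each $p\in\Vset n\setminus\{i,j,k\}$ choose a single arc $z_p\in\{(i,p),(p,k)\}\setminus B$ (one exists since C4 fails), and set
\[
R=\{(i,j),(j,k),(i,k)\}\cup\{z_p:\,p\in\Vset n\setminus\{i,j,k\}\}.
\]
There are no arcs between distinct $p$'s at all, so your obstacle never arises. One checks directly that $R$ is a semiorder on $\Vset n$ (the only $3$-chain is $i\to j\to k$, and every other element is forced to be comparable to $i$ or to $k$ by the choice of $z_p$), and $R$ meets $B$ in no pair while containing $(i,j),(j,k)\in A$, so the left-hand side is at least $2$. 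If you prefer to salvage your per-$p$ viewpoint, the correct argument is not the Lifting Lemma but the trivial observation that a semiorder on $\{i,j,k,p\}$ (with all other elements isolated) is already a semiorder on $\Vset n$: the two four-element semiorders $\{(i,j),(j,k),(i,k),(i,p)\}$ and $\{(i,j),(j,k),(i,k),(p,k)\}$, plugged into the \emph{full} inequality on $\R^{\Arcs n}$, force $(i,p)\in B$ and $(p,k)\in B$ respectively once $(i,k)\notin B$.
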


\begin{proof}1)~The incidence vector of the empty relation (or antichain) on $\Vset{n}$ gives value $0$ to the left-hand side of Inequality~\eqref{eqn_rewrite_A_B}.  Validity of Inequality~\eqref{eqn_rewrite_A_B} then implies $\beta \geqslant 0$.
Next, let $a$ be in $A$.  Consider the semiorder consisting only of the pair $\{a\}$; its characteristic vector gives value $1$ to the left-hand side of Inequality~\eqref{eqn_rewrite_A_B}.  Validity of the inequality thus implies $\beta \geqslant 1$.  Moreover, we assume $\beta\in\{-1$, $0$, $1\}$.

2)~To show that Condition~C1 is satisfied, suppose first the existence of pairwise distinct elements $i$, $j$, $k$ in $\Vset{n}$ with $(i,j)$, $(i,k)$ in $A$.  Then the vector of $\R^{A_n}$ defined by
\begin{equation*}
    y_{uv} =
    \begin{cases}
      1 & \text{if } (u,v)=(i,j) \text{ or } (u,v)=(i,k),\\
      0 & \text{otherwise }
    \end{cases}
\end{equation*}
is the incidence vector of a semiorder, and moreover 
$$
\sum_{a \in A} y_{a} - \sum_{b \in B}y_b  = 2 > \beta.
$$
This contradicts the validity of Inequality~\eqref{eqn_rewrite_A_B}. 
The argument is similar in case $(i,k), (j,k) \in A$.

3)~To establish Condition~C2, consider $(i,j),$ $(k,l) \in A$ with $i$, $j$, $k$, $l$ pairwise distinct elements. Suppose $(i,l) \notin B$. Then the vector of $\R^{A_n}$ defined by
\begin{equation*}
    y_{uv} =
    \begin{cases}
      1 & \text{if } (u,v)\in \{(i,j),\,(k,l),\,(i,l)\},\\
      0 & \text{otherwise}
    \end{cases}
\end{equation*}
is the incidence vector of a semiorder, and 
$$
\sum_{a \in A} y_{a} - \sum_{b \in B}y_b  = 2 > \beta.
$$

4)~Let $(i,j), (j,k) \in A$ and suppose $(i,k) \notin B$. Then the vector of $\R^{A_n}$ defined by
\begin{equation*}
    y_{uv} =
    \begin{cases}
      1 & \text{if } (u,v) \in \{(i,j),\,(i,k),\,(j,k)\},\\
      0 & \text{otherwise }
    \end{cases}
\end{equation*}
is the incidence vector of an interval order such that 
$$
\sum_{a \in A} y_{a} - \sum_{b \in B}y_b  \in \{2,\,3\},
$$
a contradiction with the validity of Inequality~\eqref{eqn_rewrite_A_B} for $\pio n$.  Note that the interval order we just constructed is in general not a semiorder.

5) Finally, suppose Inequality~\eqref{eqn_rewrite_A_B} is valid for $\pso n$ but does not satisfy Condition~C4.  Let the pairs $(i,j)$, $(j,k)$ in $A$ invalidate Condition~C4. Then $(i,k) \notin B$.  Moreover, for evey element $p$ in $\Vset{n}\setminus\{i,j,k\}$ we have $(i,p) \notin B$ or $(p,k)\notin B$; we call $z_p$ the pair not in $B$ (if none of the two pairs happens to be in $B$, we choose $z_p$ arbitrary among them).  Then 
$$
\{(i,j), (i,k), (j,k), z_p \st p \in \Vset{n}\setminus\{i,j,k\}\}
$$
is a semiorder on $\Vset{n}$ whose characteristic vector invalidates Inequality~\eqref{eqn_rewrite_A_B}, a contradiction. 
\end{proof}

Additional conditions obtain when Inequality~\eqref{eqn_rewrite_A_B} is moreover facet-defining.
 
\begin{theo}\label{thm_FDI_P}
Suppose that Inequality~\eqref{eqn_rewrite_A_B} defines a facet of $P_n$. Then:
\begin{enumerate}[\quad\rm 1.~]
\item if $\beta = 0$, then $A$ is empty and $B$ is a singleton, in other words Inequality~\eqref{eqn_rewrite_A_B} is a nonnegativity equality~\eqref{eqn_nonneg};
\item if $\beta = 1$, the set $A$ can neither be empty nor be a singleton.
\end{enumerate}
\end{theo}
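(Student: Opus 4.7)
The plan is to invoke Lemma~\ref{lem_1} in every case where we wish to deny the facet-defining property: we decompose Inequality~\eqref{eqn_rewrite_A_B} as a sum of two valid inequalities, each defining a distinct proper face of $P_n$, thereby contradicting the facet-defining assumption.

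For Part~1, Theorem~\ref{thm_valid_P}, item~1, already forces $A = \es$ when $\beta = 0$, so the inequality reduces to $-\sum_{b \in B} x_b \leqslant 0$. Since $A \cup B \neq \es$, we have $B \neq \es$. Suppose for contradiction $|B| \geqslant 2$ and pick $b_1 \in B$; decompose the inequality into $-x_{b_1} \leqslant 0$ plus $-\sum_{b \in B\setminus\{b_1\}} x_b \leqslant 0$. Both summands are valid by nonnegativity of the coordinates, and both define proper faces, since the characteristic vector of the empty relation is tight in each, while no singleton relation $\{b\}$ (with $b \in B$) makes either side equal to zero. The two faces are distinct: for any $b_2 \in B \setminus \{b_1\}$, the singleton relation $\{b_2\}$ lies in the first face ($x_{b_1}=0$) but not in the second ($x_{b_2}=1$). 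Lemma~\ref{lem_1} supplies the contradiction, forcing $|B| = 1$.

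For Part~2, assume $\beta = 1$. If $A = \es$, the inequality reads $-\sum_{b \in B} x_b \leqslant 1$; its left-hand side never exceeds $0$ on $P_n$, so it is never tight and the face it defines is empty, whereas a facet has dimension $\dim(P_n) - 1 \geqslant 0$, a contradiction. If instead $A = \{(i,j)\}$ is a singleton, two sub-cases arise. When $B = \es$, the inequality is $x_{ij} \leqslant 1$, which we decompose as $x_{ij} + x_{ji} \leqslant 1$ plus $-x_{ji} \leqslant 0$. When $B \neq \es$, we decompose $x_{ij} - \sum_{b \in B} x_b \leqslant 1$ as $x_{ij} \leqslant 1$ plus $-\sum_{b \in B} x_b \leqslant 0$. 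In each decomposition, both summands are valid and define distinct proper faces of $P_n$, the required witnesses being the characteristic vectors of the empty relation and of the singleton relations $\{(i,j)\}$, $\{(j,i)\}$ or $\{b\}$ for $b \in B$---all of them semiorders, hence members of each of $\pso n$, $\pio n$, $\ppo n$. Lemma~\ref{lem_1} concludes.

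The main point the reader must check, and essentially the only substance of the argument, is that in each decomposition both summand inequalities truly define proper faces of $P_n$ and that these two faces differ; this is routine once the witness relations (the empty relation and the appropriate singletons) are kept at hand.
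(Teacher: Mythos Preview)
Your proof is correct. Part~1 and the case $A=\es$ of Part~2 coincide with the paper's argument. For the singleton case $A=\{(i,j)\}$ in Part~2, however, you take a different route: you persist with Lemma~\ref{lem_1}, splitting into the sub-cases $B=\es$ and $B\neq\es$ and exhibiting an explicit decomposition in each. The paper instead argues directly that any vertex $y$ satisfying the inequality with equality must have $y_{ij}=1$, hence $y_{ji}=0$ by asymmetry, so the face under consideration is contained in the (distinct) facet defined by $-x_{ji}\leqslant 0$; since the empty relation lies in the latter facet but not in the former face, the containment is strict and the inequality cannot be facet-defining. The paper's containment argument is shorter and avoids the case split on $B$; your approach has the virtue of uniformity, reducing every case to the same mechanism.

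One minor wording issue in Part~1: you assert that ``no singleton relation $\{b\}$ (with $b\in B$) makes either side equal to zero,'' but in fact $\{b_2\}$ with $b_2\neq b_1$ does make $-x_{b_1}=0$. What you actually need---and what your subsequent distinctness argument correctly uses---is only that \emph{some} vertex lies outside each face (namely $\{b_1\}$ for the first and $\{b_2\}$ for the second).
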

	
\begin{proof}1)~If $\beta=0$, we must have $A = \varnothing$ (Theorem~\ref{thm_valid_P}).  Now the inequality reads $-\sum_{b \in B}x_b \leqslant 0$, with $B$ nonempty.  If $|B|=1$, we get a nonnegativity inequality.  If $|B| \geqslant 2$, Lemma \ref{lem_1} shows that our inequality cannot define a facet. 
	
2)~If $A=\es$, equality cannot be reached in \eqref{eqn_rewrite_A_B}.  Suppose now $A = \{(i,j)\}$. Then if $y$ is a vertex of $P_n$ satisfying equality in Inequality~\eqref{eqn_rewrite_A_B}, we have $y_{ij} = 1$ and hence $y_{ji} = 0$.  So $y$ is contained in the facet defined by $-x_{ij} \leqslant 0$, a contradiction.
\end{proof}
 
Because of Theorem~\ref{thm_FDI_P}, when searching for primary FDI's of $P_n$ in the form of Equation~\eqref{eqn_rewrite_A_B}, we will always suppose $\beta = 1$, $|A| \geqslant 2$ and $B \neq \varnothing$: indeed, the only FDI not satisfying these requirements is $-x_{ij} \leqslant 0$.

In the next two sections, we determine explicitly all the primary FDI's of respectively $\ppo n$ and $\pio n$.

%%%%%%%%%%%%%%%%%%%%%%%%%%%%%%%%%%%%%%%%%%%%%%%%%%%%%%%%%%%%%%%%%%%%%%
\section{The Primary Facet-Defining Inequalities of the Partial Order Polytope}
\label{sect_Primary_FDI_PPO}

Among other authors, \cite{Muller_1996} and, later, \cite{Fiorini_2003} investigate mathematical aspects of the partial order polytope.  It happens that their (unsurprisingly) incomplete lists of FDI's nevertheless contain all the primary ones---as we show in Theorem~\ref{main_2} below.  The partial order polytope appears in psychological applications, for instance in \cite{Regenwetter_Davis_Stober_2008}.    

\begin{theo}\label{main_1}
The inequality
\begin{equation}\label{eqn_rerewrite_A_B}
\sum_{a \in A} x_{a} - \sum_{b \in B}x_b \leqslant 1
\end{equation}
is valid for the partial order polytope $\ppo n$ if and only if the pair $A$, $B$ satisfies one of the following requirements, for some distinct elements $i$,$j$ and $k$:
\begin{enumerate}[\quad\rm 1.]
\item $A=\{(i,j)\}$;
\item $A=\{(i,j),\,(j,i)\}$;
\item $A=\{(i,j),\,(j,k)\}$ and $(i,k)\in B$;
\item $A=\{(i,j),\,(j,k),\, (k,i)\}$ and $(j,i),\, (k,j),\, (i,k)\in B$.
\end{enumerate}
\end{theo}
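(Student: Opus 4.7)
My plan is to prove both directions of the equivalence. For sufficiency, I would check directly that each of the four configurations yields an inequality that holds on the characteristic vector of every partial order. Case~1 is immediate since $x_{ij} \leqslant 1$ and each $x_b \geqslant 0$. Case~2 follows from asymmetry: any partial order contains at most one of $(i,j)$ and $(j,i)$. Case~3 is the familiar transitivity inequality $x_{ij} + x_{jk} - x_{ik} \leqslant 1$, valid because $x_{ij} = x_{jk} = 1$ in a partial order forces $x_{ik} = 1$. Case~4 is the $3$-fence inequality $x_{ij} + x_{jk} + x_{ki} - x_{ji} - x_{kj} - x_{ik} \leqslant 1$; I would argue by cases on how many of the clockwise arcs $(i,j)$, $(j,k)$, $(k,i)$ belong to the partial order---not all three, by acyclicity, and if exactly two then transitivity forces one of the counterclockwise arcs to appear, keeping the left-hand side at most~$1$. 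Extra arcs contributed to $B$ only decrease the left-hand side further.

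For necessity, I would first invoke Theorem~\ref{thm_valid_P} to obtain that $A$ satisfies Condition~C1 (so $(\Vset{n},A)$ is a PC-graph) and that $(A,B)$ satisfies Conditions~C2 and~C3. The key additional observation is that $A$ cannot contain two arcs $(i,j),(k,l)$ on four distinct vertices: indeed, the two-arc relation $R = \{(i,j),(k,l)\}$ is trivially a partial order, and at $\chi^R$ the left-hand side equals at least~$2$, since $R \subseteq A$ contributes~$2$ while $B \cap R = \es$ because $A$ and $B$ are disjoint, contradicting validity. Combined with the PC-graph structure, this forces all arcs of $A$ to lie within a single connected component, and rules out any directed path of length at least three or any directed cycle of length at least four, for in both situations the first and third arcs of the path or cycle would be vertex-disjoint on four distinct vertices. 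Hence $A$ is a single arc (case~1), a directed $2$-path (case~3), a directed $2$-cycle (case~2), or a directed $3$-cycle (case~4); in the $2$-path and $3$-cycle cases, Condition~C3 applied to each consecutive pair of arcs of $A$ forces exactly the chord arcs listed in the statement to lie in $B$.

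The main obstacle, in my view, is the leap from Theorem~\ref{thm_valid_P} to the "no two vertex-disjoint arcs in $A$" conclusion, since Condition~C2 alone does not rule out $4$-cycles or longer paths in $A$. What makes it workable is the cheap two-arc counterexample $R = \{(i,j),(k,l)\}$, which sidesteps every element of $B$ at once because $A$ and $B$ are disjoint by assumption. Once this lemma is in hand, the remainder of the argument is a routine combinatorial enumeration of PC-graphs whose edges fit inside a single component of order at most three.
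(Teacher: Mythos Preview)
Your proof is correct and follows essentially the same approach as the paper: the key step in both is the observation that $A$ cannot contain two vertex-disjoint arcs, witnessed by the two-arc partial order $\{(i,j),(k,l)\}$ (the paper formalizes this as a ``Condition~C0''), after which the enumeration of admissible $A$'s and the appeal to Condition~C3 for the constraints on $B$ are identical. The only stylistic difference is in the sufficiency direction: you verify each of the four cases directly against an arbitrary partial order, whereas the paper observes that each such inequality is the sum of one of the known FDI's \eqref{eqn_nonneg}--\eqref{eqn_4} with a nonnegativity inequality $-\sum_{c\in C} x_c \leqslant 0$.
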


\begin{proof}
We first show that if Inequality~\eqref{eqn_rerewrite_A_B} is valid for $\ppo n$, then $A$ satisfies\\

\noindent{Condition~C0}: For any distinct pairs $(i,j)$, $(k,l)$ in $A$, the elements $i$, $j$, $k$ and $l$ are not all distinct.\\[2mm]
Indeed, if the elements $i$, $j$, $k$ and $l$ were distinct the characteristic vector of the partial order $\{(i,j)$, $(k,l)\}$ would invalidate \eqref{eqn_rerewrite_A_B}.  

By Theorem~\ref{thm_valid_P}, Condition~C1 also holds.  The only subsets $A$ of $\Arcs n$ that satisfy both Conditions~C0 and C1 are exactly those described in the requirements of the theorem.  Condition~C3 (which holds by Theorem~\ref{thm_valid_P}.4) then implies the rest of the requirements.

Conversely, any of the inequalities satisfying one of the requirements is the sum of one of the FDI's \eqref{eqn_nonneg}--\eqref{eqn_4} with a valid inequality of the form $\sum_{c\in C} -x_c \le 0$ for some $C \subseteq \Arcs{n} $, so it is also valid.
\end{proof}

We now describe the FDI's of $\ppo n$.

\begin{theo}\label{main_2}
The primary FDI's of the partial order polytope $\ppo n$ are exactly those in Equations~\eqref{eqn_PPO_1}, \eqref{eqn_PPO_2}, \eqref{eqn_PPO_3} and \eqref{eqn_PPO_4},
that is:
\begin{align}
-x_{ij} \leqslant&\; 0,\label{eqn_PPO_1}
\\ x_{ij} + x_{ji} \leqslant&\; 1, \label{eqn_PPO_2}
\\ x_{ij} + x_{jk} - x_{ik} \leqslant&\; 1,\label{eqn_PPO_3}
\\ x_{ij} + x_{jk} + x_{ki} - x_{ji} - x_{kj} - x_{ik} \leqslant&\; 1\label{eqn_PPO_4}
\end{align}
(where $i,j,k$ are pairwise distinct elements of $\Vset{n}$).
\end{theo}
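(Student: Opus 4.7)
The plan is to combine the structural results already assembled in Sections~\ref{sect_Background} and \ref{gen_fact}, plus the list of valid primary inequalities of $\ppo n$ from Theorem~\ref{main_1}. The ``easy direction'' that each of \eqref{eqn_PPO_1}--\eqref{eqn_PPO_4} is facet-defining is already recorded in Proposition~\ref{prop_five_inequalities}. For the converse, I take an arbitrary primary FDI of $\ppo n$ written in the standard form \eqref{eqn_rewrite_A_B}. By Theorem~\ref{thm_valid_P}.1, its right-hand side $\beta$ lies in $\{0,1\}$. When $\beta = 0$, Theorem~\ref{thm_FDI_P}.1 forces $A = \varnothing$ and $|B|=1$, so the FDI is a nonnegativity~\eqref{eqn_PPO_1}.

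Assume from now on that $\beta = 1$. Then Theorem~\ref{main_1} restricts $(A,B)$ to exactly four shapes. The first, with $|A|=1$, is ruled out by Theorem~\ref{thm_FDI_P}.2. Under each of the remaining three requirements, I split $B = B_0 \cup B_1$ where $B_0$ is the required subset (respectively $\varnothing$, $\{(i,k)\}$, and $\{(j,i),(k,j),(i,k)\}$) and $B_1 = B \setminus B_0$. The inequality then decomposes as the sum of the corresponding candidate from \eqref{eqn_PPO_2}--\eqref{eqn_PPO_4} and the valid inequality $-\sum_{b \in B_1} x_b \leqslant 0$. If I can show that $B_1$ must be empty, the FDI is forced to equal one of \eqref{eqn_PPO_2}--\eqref{eqn_PPO_4}, finishing the proof.

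Suppose for contradiction that $B_1 \neq \varnothing$. The first summand of the decomposition defines a facet of $\ppo n$ (by Proposition~\ref{prop_five_inequalities}), hence a proper face. The second summand defines the proper face $\{x \in \ppo n \st x_b = 0 \text{ for all } b \in B_1\}$: it is nonempty (the empty partial order belongs to it) and properly contained in $\ppo n$ (some partial order includes an arc of $B_1$). The two faces are moreover distinct, because for any $b=(p,q) \in B_1$ one can explicitly produce a partial order on the first facet that contains $b$ --- for instance by starting from the ``canonical'' partial order supporting the facet (the linear order on $\{i,j\}$, $\{i,j,k\}$, or the three-cycle's transitive closure in cases 2, 3, 4 respectively), adjoining $b$, and taking the transitive closure while adjusting for possible vertex overlaps. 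An application of Lemma~\ref{lem_1} to this decomposition then contradicts the assumption that the original inequality is facet-defining, so indeed $B_1 = \varnothing$.

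The main obstacle is the face-distinctness step: one must verify, for each of the three remaining requirements and for every possible pair $b \in B_1$, that $b$ can be realized inside some partial order whose characteristic vector already saturates the candidate FDI. The only subtlety lies in the overlap cases when $b$ shares a vertex with $A \cup B_0$, but these are handled by a short, uniform case analysis, taking transitive closures where needed to preserve asymmetry and transitivity.
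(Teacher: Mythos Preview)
Your approach is essentially identical to the paper's: both directions rest on Proposition~\ref{prop_five_inequalities} for sufficiency, and on combining Theorem~\ref{main_1} with Lemma~\ref{lem_1} for necessity, decomposing a candidate primary FDI as one of the canonical inequalities plus $-\sum_{b\in B_1}x_b\leqslant 0$. The paper's own proof is a two-line sketch that simply cites Lemma~\ref{lem_1} applied to ``well-chosen subsets $C$''; you have gone further and spelled out the face-distinctness check that Lemma~\ref{lem_1} actually demands, which is a welcome addition.

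One slip to correct in your case~4: the ``canonical partial order supporting the facet'' cannot be ``the three-cycle's transitive closure'', because the transitive closure of $\{(i,j),(j,k),(k,i)\}$ is the full relation on $\{i,j,k\}$ and hence not asymmetric. Use instead any linear order on $\{i,j,k\}$, say $i\prec j\prec k$: it gives value $1+1+0-0-0-1=1$ in the left-hand side of~\eqref{eqn_PPO_4}, and for any $b=(p,q)\in B_1$ (which necessarily involves an element outside $\{i,j,k\}$, since $A\cup B_0$ already exhausts $\Arcs{3}$) its transitive closure together with $b$ is again a partial order saturating~\eqref{eqn_PPO_4}. With that fix, your argument goes through and matches the paper's intended proof.
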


\begin{proof} As mentioned in Proposition~\ref{prop_five_inequalities}, Inequalities \eqref{eqn_PPO_1}--\eqref{eqn_PPO_4} define facets of $\ppo n$.  Among the valid inequalities described in Theorem~\ref{main_1}, only inequalities \eqref{eqn_PPO_1}--\eqref{eqn_PPO_4} are FDI's.  This follows by the application of Lemma~\ref{lem_1} to the latter inequalities and inequalities of the form $\sum_{c\in C} -x_c \le 0$ for well-chosen subsets $C$ of $\Arcs{n}$.
\end{proof}

%%%%%%%%%%%%%%%%%%%%%%%%%%%%%%%%%%%%%%%%%%%%%%%%%%%%%%%%%%%%%%%%%%%%%%
\section{The Primary Facet-Defining Inequalities of the Interval Order Polytope}
\label{sect_Primary_FDI_PIO}

We now turn to the interval order polytope $\pio n$ (\citealp{Muller_Schulz_1995}; \citealp{Suck_1995}; \citealp{Regenwetter_Davis_Stober_2008}; \citealp{Regenwetter_Davis_Stober_2011}, for instance).  

\begin{theo}\label{thm_valid_pio}
The primary linear inequality
\begin{equation}\label{eqn_02_schtroumpf}
\sum_{a \in A} x_{a} - \sum_{b \in B}x_b \leqslant 1
\end{equation}
is valid for $\pio n$ if and only if $A$ and $B$ satisfy Conditions~C1, C2 and C3.  
\end{theo}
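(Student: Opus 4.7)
The plan is to treat the two implications separately. The ``only if'' direction is immediate from Theorem~\ref{thm_valid_P}: parts~2, 3 and 4 of that theorem state that whenever the primary Inequality~\eqref{eqn_02_schtroumpf} is valid for $\pio n$, the sets $A$ and $B$ must satisfy Conditions~C1, C2 and C3.

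For the ``if'' direction, I would fix an arbitrary interval order $R$ on $\Vset{n}$ and aim at the bound $|A \cap R| - |B \cap R| \leqslant 1$. The case $A \cap R = \es$ is trivial. Otherwise, I would construct an injection $\phi \colon (A\cap R)\setminus\{a_0\} \to B \cap R$ for a suitable arc $a_0$, which gives $|B \cap R| \geqslant |A \cap R| - 1$. The preliminary structural observation is that $A \cap R$ decomposes into vertex-disjoint directed paths. Indeed, Condition~C1 makes $(\Vset{n}, A)$ a PC-graph, so its components are isolated vertices, directed paths or directed cycles; since $R$ is a partial order, transitivity combined with asymmetry forbids $A \cap R$ from containing all arcs of a directed cycle, and hence every component of $A \cap R$ is a directed path. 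I would write these paths as $P_1, \ldots, P_m$ with $P_i\colon v_0^{(i)} \to v_1^{(i)} \to \cdots \to v_{l_i}^{(i)}$, letting $a_j^{(i)} = (v_{j-1}^{(i)}, v_j^{(i)})$, and take $a_0 = a_1^{(1)}$.

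The injection is defined by two rules. For each $i$ and each $j \geqslant 2$, I set $\phi(a_j^{(i)}) = (v_{j-2}^{(i)}, v_j^{(i)})$: Condition~C3 applied to the consecutive arcs $a_{j-1}^{(i)}, a_j^{(i)} \in A$ places this pair in $B$, and transitivity of $R$ places it in $R$. For each $i \geqslant 2$, the two arcs $a_1^{(1)}$ and $a_1^{(i)}$ both lie in $R$ on four pairwise distinct vertices; Fishburn's Theorem applied to $R$ forbids the induced pattern $\underline{2}+\underline{2}$, which combined with transitivity and asymmetry of $R$ forces $(v_0^{(1)}, v_1^{(i)}) \in R$ or $(v_0^{(i)}, v_1^{(1)}) \in R$. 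Condition~C2 places both candidates in $B$, so at least one of them qualifies as $\phi(a_1^{(i)})$.

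It remains to verify injectivity of $\phi$. Within-path images have both endpoints inside a single path $P_i$, whereas cross-path images involve one vertex of $P_1$ and one vertex of a distinct $P_j$, so no within-path image can coincide with a cross-path one. Within a single $P_i$ the within-path images $(v_{j-2}^{(i)}, v_j^{(i)})$ have pairwise distinct second coordinates as $j$ varies. Among cross-path images, vertex-disjointness of the paths guarantees that no two images can coincide. Hence $\phi$ is injective, giving $|B \cap R| \geqslant |A \cap R| - 1$ and completing the proof. The main obstacle is the bookkeeping required to keep $\phi$ injective; once Condition~C1 is used to extract vertex-disjoint paths in $A \cap R$, the within-path and cross-path images are kept disjoint by construction.
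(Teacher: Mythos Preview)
Your proof is correct and follows essentially the same injection strategy as the paper: both arguments fix one arc $a_0\in A\cap R$ and build an injection from $(A\cap R)\setminus\{a_0\}$ into $B\cap R$ using Conditions~C2 and C3 together with the Ferrers property of interval orders. The paper's execution is slightly leaner---it fixes a single arc $(i,j)$ and compares every other arc $(k,l)\in A\cap R$ directly to it (using C3 when they share a vertex and C2 otherwise), so no preliminary decomposition of $A\cap R$ into paths is needed and injectivity follows in one line from Condition~C1---but your path-based organization achieves the same bound.
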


Thus Inequality~\eqref{eqn_02_schtroumpf} is valid for $\pio n$ exactly if $(\Vset n,A)$ is a PC-graph and moreover $B$ contains all pairs required to make Conditions~C2 and C3 true---and maybe additional pairs.

\begin{proof}
The necessity of Conditions~C1--C3 was established in Proposition~\ref{thm_valid_P}.4.

The sufficiency of Conditions~C1--C3 obtains as follows.  Let $P$ be an interval order on $\Vset n$ with $(i,j)\in P \cap A$ (if $P \cap A=\es$ validity is obvious).  Now for each other pair $(k,l)$ in $P \cap A$, Condition~C1 implies $i\neq k$ and $j\neq l$.  If moreover $i \neq l$ and $j\neq k$, then Condition~C2 implies that $B$ contains both pairs $(i,l)$ and $(k,j)$; at least one of them must be in $P$.  If $i = l$, Condition~C3 implies $(k,j)\in B$; on the other hand, $(k,j)\in P$.  The case $j = k$ similarly gives $(i,l)\in B \cap P$.  Notice that distinct pairs $(k,l)$ in $P \cap A$ (all different from the initial $(i,j)$) deliver in this way distinct pairs in $B \cap P$ (because of Condition~C1).  The validity of \eqref{eqn_02_schtroumpf} follows. 
\end{proof}

From Theorems~\ref{thm_FDI_P}.2 and \ref{thm_valid_pio}, we know that if the primary linear inequality 
\begin{equation}\label{eqn_02}
\sum_{a \in A} x_{a} - \sum_{b \in B}x_b \leqslant 1
\end{equation}
defines a facet of $\pio n$, then $|A| \geqslant 2$ and Conditions~C1, C2, C3 hold.  
Let us say that $B$ is $A$[C2--C3]-\textsl{forced} when $B$ consists exactly of the pairs whose existence is required in Conditions~C2 and C3, that is when
\begin{equation}\label{eqn_PIO_in_B}
(i,l) \in B \quad\iff 
\begin{cases}
&\exists (i,j),(k,l)\in A \text{ with } i,j,k,l \text{ distinct, or}\\
&\exists (i,j),(j,l)\in A \text{ with } i,j,l \text{ distinct}.
\end{cases}
\end{equation}
A result of \cite{Muller_Schulz_1995} states that when $A$ and $B$ satisfy $|A| \geqslant 2$, Conditions~C1, C2, C3 and moreover $B$ is $A$[C2--C3]-forced, then Inequality~\eqref{eqn_02} is facet-defining (in a more general context, \citeauthor{Muller_Schulz_1995} call `io-clique inequalities' such inequalities).  We now easily establish  that no further primary FDI exists.

\begin{theo}\label{thm_FDI_pio}
The primary FDI's of $\pio n$ are exactly the nonnegativity inequalities and the inequalities
\begin{equation}\label{eqn_02_bis}
\sum_{a \in A} x_{a} - \sum_{b \in B}x_b \leqslant 1
\end{equation}
for which $|A| \geqslant 2$, Conditions~C1, C2, C3 hold, and moreover  $B$ is $A$[C2--C3]-forced.  
\end{theo}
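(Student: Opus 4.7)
The statement has two directions, and the sufficiency is essentially already in hand. Indeed, the nonnegativity inequalities are FDI's by Proposition~\ref{prop_five_inequalities}, and any inequality with $|A|\geqslant 2$, Conditions~C1--C3 and $B$ being $A$[C2--C3]-forced is an FDI by the result of \citet{Muller_Schulz_1995} recalled just before the statement. The substantive work therefore lies in the converse.

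Let $\sum_{a \in A} x_a - \sum_{b \in B} x_b \leqslant \beta$ be a primary linear inequality defining a facet of $\pio n$. Theorem~\ref{thm_FDI_P} disposes of the case $\beta = 0$: the inequality is then a nonnegativity inequality. Otherwise $\beta = 1$, $|A| \geqslant 2$ and $B \neq \es$. Theorem~\ref{thm_valid_pio} forces Conditions~C1, C2 and C3, which by Equation~\eqref{eqn_PIO_in_B} yields $B \supseteq B'$, where $B'$ denotes the $A$[C2--C3]-forced set. The core task is to rule out the strict inclusion $B \supsetneq B'$.

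Arguing by contradiction, suppose $B \supsetneq B'$. The plan is to apply Lemma~\ref{lem_1} to the decomposition
\begin{equation*}
\left(\sum_{a \in A} x_a - \sum_{b \in B'} x_b \leqslant 1\right) \;+\; \left(-\sum_{b \in B \setminus B'} x_b \leqslant 0\right),
\end{equation*}
which sums to the original inequality. The first summand is valid by Theorem~\ref{thm_valid_pio}, since $(A, B')$ satisfies Conditions~C1--C3 by the very definition of $B'$, and it is moreover facet-defining by \citet{Muller_Schulz_1995}; in particular it defines a proper face. The second summand is a nonnegative combination of nonnegativity inequalities, hence valid, and defines the face $\{x \in \pio n \st x_b = 0 \text{ for all } b \in B \setminus B'\}$; this face is nonempty (the antichain lies in it) and differs from $\pio n$ (any single-pair interval order $\{b_0\}$ with $b_0 \in B \setminus B'$ violates the defining equality), so it too is proper. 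Finally, the two faces are distinct, since the antichain belongs to the second but not the first. Lemma~\ref{lem_1} then forces the original inequality not to be facet-defining, the desired contradiction, so $B = B'$.

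The main hurdle is the clean bookkeeping of the decomposition and the verification that both summands define proper, distinct faces of $\pio n$. Once this is set up, a single invocation of Lemma~\ref{lem_1} closes the argument; no further combinatorial analysis of interval orders is required beyond what is already encapsulated in Theorem~\ref{thm_valid_pio} and in the cited M\"uller--Schulz facet-defining result.
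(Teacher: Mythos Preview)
Your proof is correct and follows essentially the same route as the paper: both directions rely on the M\"uller--Schulz result for sufficiency, and for necessity both reduce to an application of Lemma~\ref{lem_1} by writing the inequality as the sum of the $A$[C2--C3]-forced inequality and a combination of nonnegativity inequalities. Your version is simply more explicit in verifying the hypotheses of Lemma~\ref{lem_1} (properness and distinctness of the two faces), where the paper's proof is terse.
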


\begin{proof}
\cite{Muller_Schulz_1995} establish that Inequality~\eqref{eqn_02_bis} is a FDI when the requirements are satisfied.  For the converse, by Theorem~\ref{thm_valid_P}, there remains to prove that, for a primary FDI \eqref{eqn_02} of $\pio n$ other than a nonnegative inegality, all pairs in $B$ are $A$[C2--C3]-forced.  But this follows at once from Lemma~\ref{lem_1}, the result of \citeauthor{Muller_Schulz_1995} and the validity of $-x_{ij}\leqslant 0$. 
\end{proof}

\begin{exam}\label{exam_n_fence}
The $n$-\textsl{fence inequality} is just a particular case of the  inequalities as in Theorem~\ref{thm_FDI_pio}, in which any two pairs in $A$ are disjoint (two pairs $(i,j)$ and $(k,l)$ of elements are \textsl{(vertex) disjoint} when $\{i,j\} \cap \{k,l\}=\es$).
As in Figure~\ref{fig_fence}, let $A = \{(a_1, b_1)$, $(a_2,b_2)$, \dots, $(a_m, b_m)\}$ with $m\geqslant 1$ and $|\{a_1$, $a_2$, \dots, $a_m$, $b_1$, $b_2$, \dots, $b_m\}|=2m$
(thus $2m \leqslant n$).  The resulting set $B$ contains all pairs $(a_i,b_j)$ such that $i \neq j$.  Thus the corresponding primary  linear inequality reads
\begin{equation}\label{eqn_09}
\sum_{i=1}^{m}x_{a_ib_i} - 
\sum_{\substack{i,j=1 \\ i \neq j}}^m x_{a_ib_j} 
\;\leqslant\; 1.
\end{equation}
\begin{figure}
\begin{center}
\begin{tikzpicture}
  \tikzstyle{vertex}=[circle,draw,fill=white, scale=0.5]
 
  \node at (3.5,2) {\textbf{...}};
  \node at (4.5,1.3) {\textbf{...}};  
  \node at (4.5,0.7) {\textbf{...}};
  \node at (3.5,0) {\textbf{...}};
  
  \node (z_1) at (0,0) [vertex] {};
  \node (z_2) at (0,2) [vertex] {};
  \node (z_3) at (2,0) [vertex] {};
  \node (z_4) at (2,2) [vertex] {};
  \node (z_5) at (5,0) [vertex] {};
  \node (z_6) at (5,2) [vertex] {};
  
  \draw[->-=.7] (z_1) to (z_2);
  \draw[->-=.7] (z_3) to (z_4);
  \draw[->-=.7] (z_5) to (z_6);

   \draw (z_1.south) node [below] {$a_1$};
   \draw (z_3.south) node [below] {$a_2$};
   \draw (z_5.south) node [below] {$a_m$};
   
   \draw (z_2.north) node [above] {$b_1$};
   \draw (z_4.north) node [above] {$b_2$};
   \draw (z_6.north) node [above] {$b_m$};
   
    \draw[->-=.7, dashed] (z_1) to (z_4);
    \draw[->-=.7, dashed] (z_1) to (z_6);
    \draw[->-=.7, dashed] (z_3) to (z_2);
    \draw[->-=.7, dashed] (z_3) to (z_6);
    
     \draw[->-=.7, dashed] (z_5) to (z_2);
     \draw[->-=.7, dashed] (z_5) to (z_4);
\end{tikzpicture}
\end{center}
\caption{}\label{fig_fence}
\end{figure}
\end{exam}

The $n$-fence inequality first appeared in studies of the linear ordering polytope (see Section~\ref{sect_Primary_FDI_PLO}).  \cite{Muller_Schulz_1995} mentions that it defines a facet of $\pio n$. 

Here is an immediate corollary of Theorem~\ref{thm_FDI_pio}.

\begin{coro}
There exists a bijection between the primary FDI's of $\pio n$ and the PC graphs on $n$ elements with at least one arc. 
\end{coro}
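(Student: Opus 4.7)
The plan is to exhibit a direct bijection $\Phi$ from the set of PC-graphs on $\Vset n$ with at least one arc to the set of primary FDI's of $\pio n$, leveraging the full classification provided by Theorem~\ref{thm_FDI_pio}.  The key observation is that the theorem already splits the primary FDI's into two disjoint families---the nonnegativity inequalities $-x_{ij} \leqslant 0$, and the io-clique inequalities $\sum_{a \in A} x_a - \sum_{b \in B} x_b \leqslant 1$ with $|A| \geqslant 2$, Conditions~C1--C3 holding, and $B$ being $A$[C2--C3]-forced---so only the bookkeeping of matching each family to PC-graphs of the appropriate arc-count remains to be done.

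First I would send any PC-graph $(\Vset n, E)$ with $|E| \geqslant 2$ to the io-clique inequality obtained by taking $A = E$ and $B$ the unique $A$[C2--C3]-forced set; this step is well-defined precisely because Condition~C1 for $A$ is equivalent to $(\Vset n, A)$ being a PC-graph, and the resulting inequality is a FDI by Theorem~\ref{thm_FDI_pio}.  Next, I would send a single-arc PC-graph with sole arc $(i,j)$ to the nonnegativity inequality $-x_{ij} \leqslant 0$.  Checking injectivity amounts to noting that within the io-clique family the inequality is determined by $A$ alone (since $B$ is forced), within the nonnegativity family the single supporting pair determines the inequality, and the two families are mutually disjoint (their right-hand sides $1$ and $0$ differ, as do the cardinalities of $A$).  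Surjectivity is immediate from Theorem~\ref{thm_FDI_pio} since every primary FDI falls into one of these two families.

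The only mildly subtle point---and the ``main obstacle'', such as it is---is the asymmetry in how single-arc PC-graphs are handled: for $|E|\geqslant 2$ the PC-graph supplies the positive part $A$ of the inequality, whereas for $|E|=1$ the lone arc supplies the pair appearing on the negative side of $-x_{ij} \leqslant 0$.  This choice is dictated by Theorem~\ref{thm_FDI_P}.2, which rules out $x_{ij} \leqslant 1$ as a FDI.  Once this convention is fixed, the cardinalities match: the $n(n-1)$ single-arc PC-graphs pair with the $n(n-1)$ nonnegativity FDI's, and each PC-graph with at least two arcs pairs with exactly one io-clique FDI, so no further verification is needed.
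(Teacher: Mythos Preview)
Your proposal is correct and follows essentially the same approach as the paper: both build the bijection by pairing single-arc PC-graphs with nonnegativity inequalities and PC-graphs with $|E|\geqslant 2$ with the io-clique inequality having $A=E$ and $B$ forced. The only cosmetic difference is that you define the map from PC-graphs to FDI's, whereas the paper defines it in the reverse direction.
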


\begin{proof}
Let 
$$
\sum_{a \in A} x_{a} - \sum_{b \in B}x_b \leqslant \beta
$$
be a primary FDI of $\pio n$.  If $\beta = 0$ then by Theorem~\ref{thm_FDI_P}.1, we have a nonnegativity inequality, that is $A = \varnothing$ and there exist $(i,j) \in \Arcs{n}$ such that $B = \{(i,j)\}$.  To the inequality we associate the PC graph on $\Vset{n}$ having one single arc $\{(i,j)\}$.  If $\beta = 1$ then $A$ contains at least two arcs and to the inequality we associate the PC graph $(\Vset{n},A)$.  This association specifies the desired bijection.  Indeed, surjectivity is obvious, and injectivity holds because $A$ univocally determines $B$.
\end{proof}

%%%%%%%%%%%%%%%%%%%%%%%%%%%%%%%%%%%%%%%%%%%%%%%%%%%%%%%%%%%%%%%%%%%%%%
\section{The Lifting Lemma for the Semiorder Polytope}
\label{sect_Lifting_Lemma}

We now turn to the semiorder polytope $\pso n$.
The polytope is mentioned for instance in an unpublished manuscript of \citet{Suck_1995} and in papers by \citet{Regenwetter_Davis_Stober_2008, Regenwetter_Davis_Stober_2011}.
The following theorem entails that any facet-defining inequality for $\pso n$ remains facet-defining for $\pso k$ for all $k$ such that $k  \geqslant n$---it is common to name such a statement the \textsl{Lifting Lemma}.  \cite{Suck_1995} states the theorem and gives a sketch of proof.  Notice $\dim\left(\pso {n+1}\right) =  2n + \dim\left(\pso {n}\right)$, because Equation~\eqref{eqn_dim} reads $\dim \pso n = n(n-1)$.

\begin{theo}\label{theo_lifting}
Let 
\begin{equation}\label{eqn_before}
\sum_{(i,j) \in \Arcs{n}} \alpha_{ij}x_{ij} \leqslant \beta 
\end{equation}
be an inequality on $\R^\Arcs{n}$ which is valid for $\pso n$ (where $\alpha_{ij}$, $\beta \in \R$).  Let $F$ be the face of $\pso n$ defined by \eqref{eqn_before}.  Consider then the inequality on $\R^{\Arcs{n+1}}$
\begin{equation}\label{eqn-after}
\sum_{(i,j) \in \Arcs{n+1}} \alpha_{ij}'x_{ij} \leqslant \beta
\end{equation}
with $\alpha_{ij}' = \alpha_{ij}$ if $i,j \in \Vset{n}$ and $\alpha_{ij}' = 0$ if $i = n+1$ or $j = n+1$.  Then Equation~\eqref{eqn-after} is valid for $\pso {n+1}$, and it defines a face $F'$ of $\pso{n+1}$ of dimension 
\begin{equation}
\dim(F') = 2n + \dim(F).
\end{equation}
In particular, if Equation~\eqref{eqn_before} defines a facet of $\pso n$, then Equation~\eqref{eqn-after} defines a facet of $\pso{n+1}$. 
\end{theo}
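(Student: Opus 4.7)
The plan is to prove the three assertions in order: validity of Equation~\eqref{eqn-after} for $\pso{n+1}$, the dimension formula $\dim(F')=2n+\dim(F)$ (via matched upper and lower bounds), and the facet claim (which becomes immediate from Equation~\eqref{eqn_dim}). The fundamental observation is that the lifted coefficients $\alpha'_{ij}$ vanish whenever $i=n+1$ or $j=n+1$; hence for any semiorder $R$ on $\Vset{n+1}$, whose restriction $R|_{\Vset n}$ is automatically a semiorder on $\Vset n$, the left-hand side of Equation~\eqref{eqn-after} evaluated at $\chi^R$ equals the left-hand side of Equation~\eqref{eqn_before} evaluated at $\chi^{R|_{\Vset n}}$.

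Validity of \eqref{eqn-after} follows immediately from this observation and the validity of \eqref{eqn_before}. For the upper bound on $\dim(F')$, let $\pi\colon\R^{\Arcs{n+1}}\to\R^{\Arcs n}$ be the projection that forgets the $2n$ coordinates involving $n+1$. The observation forces $\pi(F')\subseteq F$, so $\dim(F')\leqslant\dim(F)+\dim(\ker\pi)=\dim(F)+2n$.

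For the lower bound I construct $\dim(F)+2n+1$ affinely independent points in $F'$. Let $d=\dim(F)$ and pick $d+1$ affinely independent characteristic vectors $\chi^{R_0},\ldots,\chi^{R_d}$ in $F$. For each $i$, fix a unit interval representation $f_i$ of $R_i$ and extend to a semiorder $\hat R_i$ on $\Vset{n+1}$ by setting $f_i(n+1)$ larger than every $f_i(k)+1$; then $n+1$ lies above all of $\Vset n$ in $\hat R_i$, so every $\chi^{\hat R_i}$ has $x_{k,n+1}=1$ and $x_{n+1,k}=0$ on the new coordinates. Perturbing $f_0$ slightly---which is possible because genericity amounts to avoiding finitely many codimension-$1$ conditions within the convex set of unit interval representations of $R_0$---we may assume that the $2n$ values $f_0(k)\pm 1$ for $k\in\Vset n$ are all distinct. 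Sweeping $t=f_0(n+1)$ from $-\infty$ to $+\infty$ yields $2n+1$ successive semiorder extensions $E^0,E^1,\ldots,E^{2n}$ of $R_0$, with $E^{2n}=\hat R_0$; at each of the $2n$ breakpoints exactly one new coordinate flips (either $x_{n+1,k}$ from $1$ to $0$ as $t$ crosses $f_0(k)-1$, or $x_{k,n+1}$ from $0$ to $1$ as $t$ crosses $f_0(k)+1$), and the $2n$ flips occupy the $2n$ distinct new coordinates.

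Take $\chi^{\hat R_0}$ as reference point. The differences $\chi^{\hat R_i}-\chi^{\hat R_0}$ for $i=1,\ldots,d$ are supported on the old coordinates and are linearly independent (since $\chi^{R_0},\ldots,\chi^{R_d}$ are affinely independent in $F$); the differences $\chi^{E^i}-\chi^{\hat R_0}$ for $i=0,\ldots,2n-1$ are supported on the new coordinates and, ordered by decreasing $i$ against the basis indexed by the breakpoints, form a triangular system of $\pm 1$ entries, hence are linearly independent. As these two families of direction vectors live in complementary coordinate blocks, they are jointly linearly independent, yielding $\dim(F')\geqslant d+2n$. Combined with the upper bound this gives the dimension formula, and the facet claim follows at once from $\dim(\pso{n+1})-\dim(\pso n)=2n$ via Equation~\eqref{eqn_dim}. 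The main subtle step is the sweep: genericity of $f_0$ is used to guarantee that the $2n$ breakpoints are distinct, so that $2n+1$ extensions actually arise and each transition flips a single well-identified coordinate.
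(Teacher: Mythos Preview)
Your proof is correct and follows essentially the same route as the paper: the projection $\pi$ gives the upper bound, and the lower bound comes from lifting $d+1$ affinely independent vertices of $F$ to $F'$ by placing $n+1$ at an extreme position, then sweeping the representing value of $n+1$ across the $2n$ breakpoints of a generic unit representation of $R_0$ to obtain $2n$ further vertices of $F'$. The only cosmetic differences are that the paper places $n+1$ below (rather than above) all elements for the lifts, and phrases the affine independence slightly less explicitly than your block-triangular argument.
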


\begin{proof}  
Consider the canonical linear projection $\pi: \R^{\Arcs{n+1}} \to \R^{\Arcs{n}}$ (whose effect is to delete all coordinates attached to a pair of elements one of which is $n+1$).
Then $\pi$ maps the characteristic vector of a semiorder $S$  on $\Vset{n+1}$ to the characteristic vector of the restriction of $S$ to $\Vset{n}$---and this restriction is a semiorder on $\Vset{n}$.  Hence Equation~\eqref{eqn-after} is valid for $\pso{n+1}$ and it defines a face $F'$ of $\pso{n+1}$. 
From previous sentence there follows $\pi(F') \subseteq F$, and then  $F' \subseteq \pi^{-1}(F)$.
Now because the kernel of the linear mapping $\pi$ has dimension $2n$, we derive
$$  
\dim(F') \leqslant 2n + \dim(F).
$$
To prove the opposite inequality, let $k=\dim(F)$ and select $1+k$ affinely independent vertices $v_0$, $v_1$, \dots, $v_k$ in $F$.  Thus each $v_i$ is the characteristic vector of some semiorder $R_i$ on $\Vset{n}$.  Adding to $R_i$ all pairs $(n+1,i)$ for $i\in \Vset{n}$, we get a semiorder $R_i'$ on $\Vset{n+1}$;
denote by $v'_i$ its characteristic vector in $\R^{\Arcs{n+1}}$.  All $v'_i$'s belong to $F'$.  Moreover the points $v'_0$, $v'_1$, \dots, $v'_k$ are affinely independent because $\pi(v'_i)=v_i$.
To prove $\dim(F') \geqslant 2n + \dim(F)$, it suffices now to build $2n$ vertices $w_1$, $w_2$, \dots, $w_{2n}$ in $F' \cap \pi^{-1}(v_0)$ such that $v'_0$, $w_1$, $w_2$, \dots, $w_{2n}$ are affinely independent (because then $v'_0$, $w_1$, $w_2$, \dots, $w_{2n}$,  $v'_1$, \dots, $v'_k$ are together affinely independent).  

Consider some unit interval representation of $R_0$.  First, by slightly perturbing the real values $f(i)$ if necessary, we make the  $2n$ real values $f(i)-1$ and $f(i)$, for $i=1$, $2$, \dots, $n$, distinct and list them in increasing order as $\gamma_1$, $\gamma_2$, \dots, $\gamma_{2n}$.
We then form $1+2n$ semiorders on $\Vset{n+1}$ by specifying one of their interval representations: we always leave unchanged the values $f(i)$ (that is, the actual intervals $[f(i),f(i)+1]$) for $1 \leqslant i \leqslant n$, but select for $f(n+1)$ various values: first a value strictly below $\gamma_1$, then a value strictly between  $\gamma_1$ and $\gamma_2$,  next a value strictly between $\gamma_2$ and $\gamma_3$, \dots, and finally a value strictly above $\gamma(2n)$.  There results $1+2n$ semiorders $R'_0$, $S_1$, $S_2$, \dots, $S_{2n}$ on $\Vset{n+1}$ whose characteristic vectors we denote by $v'_0$, $w_1$, $w_2$, \dots, $w_{2n}$.  Notice that $\pi$ maps all these vectors to $v_0$.  Moreover the points $v'_0$, $w_1$, $w_2$, \dots, $w_{2n}$ are affinely independent, because either the semiorder $S_j$ lacks a pair $(n+1,i)$ which belongs to all previous semiorders or $S_j$ contains a pair $(i,n+1)$ which was in none of the previous semiorders. 

We have thus proved $\dim(F') = 2n + \dim(F)$.  The very last assertion directly follows.
\end{proof}

Theorem~\ref{theo_lifting} motivates the following:
When we study an inequality of the form $\sum_{a \in A} x_{a} - \sum_{b \in B}x_b \leqslant \beta$ for $\pso n$ , we may restrict ourselves to the smallest $n$ such that $A \cup B \subseteq \Arcs{n}$.  If this inequality is valid (respectively, facet-defining) for $\pso n$ it will also be valid (respectively, facet-defining) for $\pso k$ with $k \geqslant n$.

This is how \cite{Suck_1995} establishes several families of primary FDI's for $\pso n$ (some were already mentioned in Proposition~\ref{prop_five_inequalities}).    

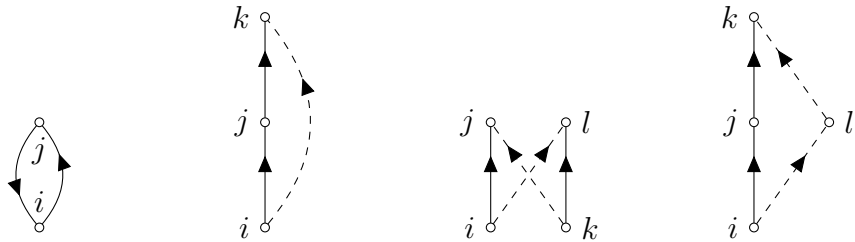
\begin{figure}[h]
\begin{center}
\begin{tikzpicture}[yscale=0.7]
  \tikzstyle{vertex}=[circle,draw,fill=white, scale=0.3]
  
%------ inŽgalitŽ  x_{ij} + x_{ji} \leqslant 1 ----------
  \node (v_i) at (0,0) [vertex] {};
  \node (v_j) at (0,2) [vertex] {};
  
   \draw (v_i.north) node [above] {$i$};
   \draw (v_j.south) node [below] {$j$};

  \draw[->-=.7] (v_i) to [bend right] (v_j);
  \draw[->-=.7] (v_j) to [bend right] (v_i);
  	    
%------ inŽgalitŽ x_{ij} + x_{kl} - x_{il} - x_{kj} \leqslant 1 
\begin{scope}[xshift=6cm]
  \node (z_i) at (0,0) [vertex] {};
  \node (z_j) at (0,2) [vertex] {};
  \node (z_k) at (1,0) [vertex] {};
  \node (z_l) at (1,2) [vertex] {};

  \draw[->-=.7] (z_i) to (z_j);
  \draw[->-=.7] (z_k) to (z_l);
  
   \draw[->-=.8, dashed] (z_i) to (z_l);
   \draw[->-=.8, dashed] (z_k) to (z_j);
   
   \draw (z_i.west) node [left] {$i$};
   \draw (z_j.west) node [left] {$j$};
   \draw (z_k.east) node [right] {$k$};
   \draw (z_l.east) node [right] {$l$};
\end{scope}
%------ inŽgalitŽ  x_{ij} + x_{jk} - x_{ik} \leqslant 1 ----------
\begin{scope}[xshift=3cm]
  \node (u_k) at (0,4) [vertex] {};
  \node (u_j) at (0,2) [vertex] {};
  \node (u_i) at (0,0) [vertex] {};
  
  \draw[->-=.7] (u_i) to (u_j);
  \draw[->-=.7] (u_j) to (u_k);
  \draw[->-=.7, dashed] (u_i) to [bend right] (u_k);
  
   \draw (u_i.west) node [left] {$i$};
   \draw (u_j.west) node [left] {$j$};
    \draw (u_k.west) node [left] {$k$};
\end{scope}
%------ inŽgalitŽ  x_{ij} + x_{jk} - x_{il} - x_{lk} \leqslant 1 
\begin{scope}[xshift=9.5cm]
  \node (t_k) at (0,4) [vertex] {};
  \node (t_j) at (0,2) [vertex] {};
  \node (t_i) at (0,0) [vertex] {};
  \node (t_l) at (1,2) [vertex] {};
  
  \draw[->-=.7] (t_i) to (t_j);
  \draw[->-=.7] (t_j) to (t_k);
  \draw[->-=.7, dashed] (t_i) to (t_l);
  \draw[->-=.7, dashed] (t_l) to (t_k);
  
   \draw (t_i.west) node [left] {$i$};
   \draw (t_j.west) node [left] {$j$};
   \draw (t_k.west) node [left] {$k$};
   \draw (t_l.east) node [right] {$l$};
\end{scope}
\end{tikzpicture}
\end{center}
\caption{\label{fig_axiomatic}
Graphical representations of the Axiomatic Inequalities \eqref{eqn_axiomatic1}--\eqref{eqn_axiomatic4} for $\pso n$ (with independent term equal to $+1$).}
\end{figure}

For $i$, $j$, $k$, $l$ any distinct elements of $\Vset{n}$, the four inequalities (see also Figure~\ref{fig_axiomatic})
\begin{align}
x_{ij} + x_{ji}  &\leqslant 1,\label{eqn_axiomatic1}\\
x_{ij} + x_{jk} - x_{ik} &\leqslant 1,\label{eqn_axiomatic2}\\
x_{ij} + x_{kl} - x_{il} - x_{kj} &\leqslant 1,\label{eqn_axiomatic3}\\
x_{ij} + x_{jk} - x_{il} - x_{lk} &\leqslant 1\label{eqn_axiomatic4}
\end{align}
are valid for $\pso n$.  
Because Equations~\eqref{eqn_axiomatic1}--\eqref{eqn_axiomatic4} derive from the conditions in the (Scott-Suppes) Theorem~\ref{theo_Scott_Suppes} (for the first and second ones, remember that neither $(i,i)$ nor $(j,j)$ appears in any semiorder), we call them the \textsl{Axiomatic Inequalities}.  
  
\begin{theo}[Axiomatic FDIs of $\pso n$, \citealp{Suck_1995}]\label{theo_basic_axiomatic}
For $i,j,k,l \in \Vset{n}$ pairwise distinct, the Axiomatic Inequalities~\eqref{eqn_axiomatic1}--\eqref{eqn_axiomatic4} define facets of $\pso n$.  
\end{theo}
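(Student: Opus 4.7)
The plan is to apply the Lifting Lemma (Theorem~\ref{theo_lifting}) to reduce to the smallest value of $n$ containing all involved indices: $n=2$ for Equation~\eqref{eqn_axiomatic1}, $n=3$ for Equation~\eqref{eqn_axiomatic2}, and $n=4$ for Equations~\eqref{eqn_axiomatic3} and \eqref{eqn_axiomatic4}. Since $\pso n$ is full-dimensional of dimension $n(n-1)$ by Equation~\eqref{eqn_dim}, in each case it suffices to exhibit $n(n-1)$ affinely independent characteristic vectors of semiorders on $\Vset n$ that attain equality in the inequality under study; combined with validity, this shows the face has codimension exactly~$1$ and so is a facet.

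For Equation~\eqref{eqn_axiomatic1} in $\pso 2$, the semiorders $\{(i,j)\}$ and $\{(j,i)\}$ deliver at once the two required affinely independent tight points. For Equation~\eqref{eqn_axiomatic2} in $\pso 3$ with $(i,j,k)=(1,2,3)$, I would list six tight semiorders, for instance $\{(1,2),(2,3),(1,3)\}$, $\{(1,2)\}$, $\{(2,3)\}$, $\{(1,2),(3,2)\}$, $\{(2,3),(2,1)\}$ and $\{(1,2),(3,1),(3,2)\}$, check each satisfies $x_{12}+x_{23}-x_{13}=1$ and is a genuine semiorder via the Scott--Suppes Theorem~\ref{theo_Scott_Suppes}, then verify affine independence by a direct rank computation on the resulting $6\times 6$ matrix.

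For Equations~\eqref{eqn_axiomatic3} and \eqref{eqn_axiomatic4} in $\pso 4$, the construction must produce twelve affinely independent tight semiorders. I would generate them from unit interval representations: start with a base representation realizing equality (for \eqref{eqn_axiomatic3}, the pair of intervals $(i,j)$ and the pair $(k,l)$ are each separated while the intervals associated with $\{i,l\}$ and with $\{k,j\}$ overlap; for \eqref{eqn_axiomatic4}, the chain through $i,j,k$ is present while the interval for $l$ overlaps those for both $i$ and $k$), and then obtain further tight semiorders by small independent perturbations of individual endpoints, each flipping exactly one coordinate at a time while preserving both semiorder status and the target equality. This is precisely the endpoint-sweeping mechanism used inside the proof of the Lifting Lemma, so the affine independence of the twelve resulting vectors is essentially automatic.

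The main obstacle is the combinatorial bookkeeping for Equations~\eqref{eqn_axiomatic3} and \eqref{eqn_axiomatic4}: each of the twelve candidate semiorders on $\Vset 4$ must be checked to induce neither $\underline{2}+\underline{2}$ nor $\underline{3}+\underline{1}$ and to satisfy the relevant equality, and the resulting $12\times 12$ matrix of characteristic vectors must have affine rank~$12$. Exhibiting all three items compactly is delicate, which is why \citet{Suck_1995} only sketches the proof; a cleaner alternative (Method~B) would be to suppose some inequality $\sum_a \alpha_a x_a \leqslant \gamma$ is tight on the same face and use single-coordinate flips between tight semiorders to force $\alpha_a = 0$ for all $a$ outside the support of the axiomatic inequality, then match coefficients to conclude proportionality.
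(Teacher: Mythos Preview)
Your approach is exactly the one the paper indicates: reduce via the Lifting Lemma (Theorem~\ref{theo_lifting}) to the smallest relevant $n$ and then verify directly by exhibiting $n(n-1)$ affinely independent tight semiorders. The paper itself does not spell out the base-case verifications but simply attributes the result to \citet{Suck_1995} after remarking that ``this is how \citeauthor{Suck_1995} establishes several families of primary FDI's''; your proposal supplies precisely those omitted details, and your explicit list for \eqref{eqn_axiomatic2} checks out.
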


%%%%%%%%%%%%%%%%%%%%%%%%%%%%%%%%%%%%%%%%%%%%%%%%%%%%%%%%%%%%%%%%%%%%%%
\section{The Primary Valid Inequalities of the Semiorder Polytope}\label{sec_valid_PSO}

Theorem~\ref{thm_FDI_P} states that if the primary linear inequality
\begin{equation}\label{eqn_03}
\sum_{a \in A} x_{a} - \sum_{b \in B}x_b \leqslant 1
\end{equation}
is valid for the semiorder polytope $\pso n$, then $A$ and $B$ must satisfy Conditions~C1, C2 and C4.  As we will see in the next proof, the converse does not hold and we now proceed to define an additional necessary condition.\\

\noindent \textbf{Condition~C5}: For any pairs $(i,j)$, $(j,k)$ and $(k,l)$ in $A$ with $i$, $j$, $k$ and $l$ distinct elements, at least one the following requirements holds: 
\begin{enumerate}[\quad({v}1)]
\item $(i,k)\in B$;
\item $(j,l)\in B$;
\item there exists some $r$ in $\Vset{n}\setminus\{i,j,k,l\}$ such that $(i,r),(r,k)\in B$;
\item there exists some $s$ in $\Vset{n}\setminus\{i,j,k,l\}$ such that $(j,s),(s,l)\in B$;
\item there exists some $t$ in $\Vset{n}\setminus\{i,j,k,l\}$ such that $(i,t),(t,l)\in B$;
\item there exist some $u$ and $v$ in $\Vset{n}\setminus\{i,j,k,l\}$ such that $(i,u),(u,v)$, $(v,l)\in B$.
\end{enumerate}

An illustration is given in Figure~\ref{fig_C5}.  Taking into considerations $i$, $j$ and $k$ in Conditions~C4 and C5 (cf.~Figures~\ref{fig_C2C3C4} and \ref{fig_C5}), the reader might think that Condition~C4 implies Condition~C5.  However this is not true because it could be that the element $p$ in Condition~C4 equals $l$ in Condition~C5. 

\begin{figure}[h]
\begin{center}
\begin{tikzpicture}[yscale=1.5,xscale=1.5]
  \tikzstyle{vertex}=[circle,draw,fill=white, scale=0.3]
    
\begin{scope}[xshift=5.5cm]
  \node (i) at (0,0) [vertex,label=left:$i$] {};
  \node (j) at (0,1) [vertex,,label=left:$j$] {};
  \node (k) at (0,2) [vertex,label=left:$k$] {};
  \node (l) at (0,3) [vertex,label=left:$l$] {};
  \draw[->-=.7] (i) -- (j);
  \draw[->-=.7] (j) -- (k);
  \draw[->-=.7] (k) -- (l);
  \draw[->-=.7, dashed] (i) to [bend right] (k);
  \draw[->-=.7, dashed] (j) to [bend right] (l);
  
  \node (r) at (1,1) [vertex,label=above:$r$] {};
  \draw[->-=.7, dashed] (i) -- (r);
  \draw[->-=.7, dashed] (r) -- (k);
  
  \node (s) at (1,2) [vertex,label=below:$s$] {};
  \draw[->-=.7, dashed] (j) -- (s);
  \draw[->-=.7, dashed] (s) -- (l);
  
  \node (t) at (2,1.5) [vertex,label=right:$t$] {};
  \draw[->-=.7, dashed] (i) -- (t);
  \draw[->-=.7, dashed] (t) -- (l);
  
  \node (u) at (3,1) [vertex,label=right:$u$] {};
  \node (v) at (3,2) [vertex,label=right:$v$] {};
  \draw[->-=.7, dashed] (i) -- (u);
  \draw[->-=.7, dashed] (u) -- (v);
  \draw[->-=.7, dashed] (v) -- (l);
\end{scope}
  
\end{tikzpicture}
\end{center}
\caption{Condition~C5 on primary linear inequalities for $\pso n$. \label{fig_C5}}
\end{figure}
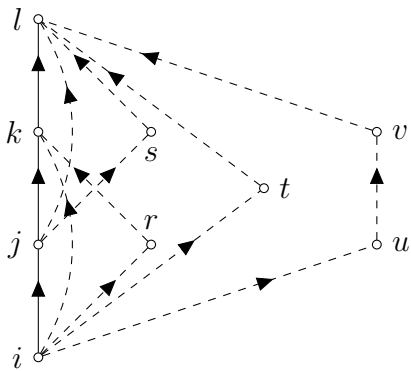

\begin{theo}\label{theo_valid_ineq}
Assume $A$ and $B$ are disjoint subsets of $\Arcs{n}$. 
Then the primary linear inequality
\begin{equation}
\label{eqn_general_valid}
\sum_{a \in A} x_{a} - \sum_{b \in B}x_b \leqslant 1
\end{equation}
is valid for $\pso n$ if and only if Conditions~C1, C2, C4 and C5 are satisfied. Moreover, the four conditions are logically independent.
\end{theo}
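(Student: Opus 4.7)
Plan. The necessity of C1, C2, and C4 is given by Theorem~\ref{thm_valid_P}, so I need only to prove necessity of C5, prove sufficiency of C1--C5, and exhibit the four examples for independence.

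\textbf{Necessity of C5.} I would argue by contrapositive: assume C1--C4 hold but C5 fails at some $(i,j),(j,k),(k,l)\in A$ with distinct $i,j,k,l\in\Vset{n}$, and exhibit a violating semiorder. Applying C4 to $(i,j),(j,k)$ and using v1, v3 false together with $A\cap B=\es$ forces $(i,l),(l,k)\in B$ (the only candidate left for the intermediate element in C4 is $l$, since $r=j$ is ruled out by $(i,j)\in A$); symmetrically, C4 on $(j,k),(k,l)$ and v2, v4 false forces $(j,i),(i,l)\in B$. Taking the unit interval representation with $f(i)=0$, $f(j)=2$, $f(k)=4$, $f(l)=6$, the induced semiorder on $\{i,j,k,l\}$ contributes $3-1=2$ to the LHS of~\eqref{eqn_general_valid}: three $A$-pairs give $+3$, and only $(i,l)$ among the three transitive pairs $(i,k),(j,l),(i,l)$ lies in $B$. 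For each extra element $p\in\Vset{n}\setminus\{i,j,k,l\}$, I would place $f(p)$ at one of $\{1,3,5\}$; a short case analysis on the $B$-pairs involving $p$ and $\{i,j,k,l\}$, using the failure of v3, v4, v5, shows that at least one such position gives nonnegative contribution. Pairs between two distinct extras are then controlled by placing them at identical or close positions, invoking v6 being false to exclude the worst $B$-configurations; the resulting semiorder violates~\eqref{eqn_general_valid}.

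\textbf{Sufficiency.} Assume C1--C5, and let $S$ be any semiorder on $\Vset{n}$. The plan is to exhibit an injection $\phi\colon (A\cap S)\setminus\{\alpha^*\}\to B\cap S$ for some distinguished $\alpha^*\in A\cap S$, which immediately gives $|A\cap S|-|B\cap S|\leqslant 1$. By C1, $(\Vset{n},A)$ is a disjoint union of paths and cycles, and $A\cap S$ inherits this structure block-by-block. The local rules defining $\phi$ are: for two disjoint $A$-pairs in $S$, the semiorder exchange axiom ($(a_1,b_1),(a_2,b_2)\in S \Rightarrow (a_1,b_2)\in S$ or $(a_2,b_1)\in S$) together with C2 supplies a $B$-pair in $S$; for two consecutive $A$-pairs in $S$, the semi-transitivity axiom ($(i,j),(j,k)\in S\Rightarrow (i,p)\in S$ or $(p,k)\in S$ for every $p$) together with C4 supplies a $B$-pair in $S$; for three consecutive $A$-pairs in $S$, each of the six subcases v1--v6 of C5 pinpoints a $B$-pair in $S$. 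The chief obstacle I anticipate is assembling these local choices into a globally injective $\phi$, namely that the locally produced $B$-pairs be pairwise distinct; I would address this by ordering $A\cap S$ along the interval representation of $S$ and charging pairs inductively, or by setting up a bipartite graph between $A\cap S$ and $B\cap S$ and invoking Hall's theorem.

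\textbf{Independence.} The four conditions are then shown logically independent by exhibiting, for each, a minimal pair $(A,B)$ that satisfies the other three but not the chosen one, together with a semiorder $S$ on which the LHS of~\eqref{eqn_general_valid} exceeds $1$. Representative choices are $A=\{(i,j),(i,k)\}$, $B=\es$ for C1; $A=\{(i,j),(k,l)\}$, $B=\{(i,l)\}$ for C2; $A=\{(i,j),(j,k)\}$, $B=\es$ for C4; and $A=\{(i,j),(j,k),(k,l)\}$, $B=\{(i,l),(l,k),(j,i)\}$ for C5. In each case, a short interval-representation computation on $\Vset{4}$ or $\Vset{5}$ delivers the witnessing semiorder, confirming that the violated condition cannot be dropped from the characterization.
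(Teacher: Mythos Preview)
Your proposal has the right overall architecture, but there is one concrete error and two places where the argument is underdeveloped compared to the paper.

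\textbf{Independence, C5 example.} Your choice $A=\{(i,j),(j,k),(k,l)\}$, $B=\{(i,l),(l,k),(j,i)\}$ does \emph{not} satisfy C2: applying C2 to the disjoint $A$-pairs $(k,l)$ and $(i,j)$ forces $(k,j)\in B$, which you omitted. The paper's example takes $B=\{(i,l),(l,k),(k,j),(j,i)\}$, and one checks that C1, C2, C4 all hold while C5 fails. (Your C2 example with $B=\{(i,l)\}$ is fine; C2 still fails at $(k,j)$.)

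\textbf{Necessity of C5.} Your three candidate positions $1,3,5$ for an extra element $p$ do correspond, up to scaling, to the three classes in the paper's construction. But the step ``at least one such position gives nonnegative contribution'' needs a \emph{global criterion} for assigning each $p$ to a position, and the pairs between two extras must then be analysed under that fixed assignment---you cannot first optimize each $p$ in isolation and afterwards move them ``to identical or close positions''. The paper resolves this cleanly by partitioning $\Vset{n}\setminus\{i,j,k,l\}$ into
\[
T=\{p:(i,p)\notin B,\;(p,l)\notin B\},\qquad U=\{p:(i,p)\in B\},\qquad V=\{p:(p,l)\in B\}
\]
(disjoint because v5 fails, and exhaustive by definition), assigning one position to each class. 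Failure of v3, v4, v5 forces zero $B$-contribution from each extra's pairs with $\{i,j,k,l\}$, and the only cross-extra pairs in the resulting semiorder are $(u,v)$ with $u\in U$, $v\in V$, which are excluded from $B$ precisely by failure of v6.

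\textbf{Sufficiency.} Your injection strategy is different from the paper's, and the hard part---global injectivity---is only gestured at. The paper avoids it entirely: Lemma~\ref{Lem_ex_11}, using only C1 and C2, shows that any semiorder $S$ with left-hand side exceeding $1$ must have $S\cap A$ equal to a directed path of length~$2$ with $S\cap B=\es$, or a path of length~$3$ with $S\cap B=\{(i,l)\}$. Then C4 kills the first case and C5 the second, with no matching bookkeeping. Your approach can be pushed through, but it effectively requires reproving that lemma: once $|S\cap A|\geqslant 4$, C1 makes $(\Vset{n},S\cap A)$ a PC-graph with enough disjoint pairs that C2 alone furnishes sufficiently many distinct $B$-pairs in $S$; the delicate cases are exactly the short paths, where C4 and C5 must be invoked, and where the ``local $B$-pair'' supplied by C4 can collide with the one supplied by C2 (this collision is precisely case~$(\beta)$ of the lemma, and is what C5 is for).
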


Before presenting the proof of Theorem~\ref{theo_valid_ineq} we establish a lemma which provides information on the semiorders $S$ whose characteristic vector $\chi_S$ invalidates  Equation~\eqref{eqn_general_valid}. 
\begin{lemm}\label{Lem_ex_11}
Assume $A$ and $B$ are disjoint subsets of $\Arcs{n}$ which satisfy Conditions~C1 and C2. Then for any semiorder $S$ on $\Vset{n}$ such that
\begin{equation}\label{eqn_lemme}
\sum_{a \in A} \chi^{\RR}_{a} - \sum_{b \in B} \chi^{\RR}_b \geqslant 1
\end{equation}
there holds $|S \cap A| \in \{1,2,3,4\}$. Moreover, if
\begin{equation}\label{eqn_lemme_2}
\sum_{a \in A} \chi^{\RR}_{a} - \sum_{b \in B} \chi^{\RR}_b > 1
\end{equation}
then
\begin{enumerate}
\item[\quad$(\alpha$)] either $S \cap A$ forms a path of length $2$  
and $S \cap B = \es$
\item[\quad$(\beta$)] or $S \cap A$ forms a path of length $3$, say $S \cap A =\{(i,j),(j,k),(k,l)\}$, 
and moreover $S \cap B = \{(i,l)\}$.
\end{enumerate}
\end{lemm}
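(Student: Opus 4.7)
My approach is to bound $|S\cap B|$ from below by exhibiting arcs that are forced to lie in it, and then to solve the resulting integer inequality over all possible configurations of $S\cap A$.

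Since $S$ is a strict partial order it contains no directed cycle, and since $A$ is a PC-graph by Condition~C1, this forces $S\cap A$ to decompose as a vertex-disjoint union of directed paths. Denote their lengths by $\ell_1,\ldots,\ell_c$ and set $N=\sum_k\ell_k=|S\cap A|$. I then identify two families of distinct arcs that must belong to $S\cap B$. \emph{Within a path} on vertices $a_1,a_2,\ldots,a_{\ell_k+1}$: whenever $j\geqslant i+2$, the arcs $(a_i,a_{i+1})$ and $(a_j,a_{j+1})$ have four distinct endpoints, so Condition~C2 yields $(a_i,a_{j+1})\in B$; transitivity of $S$ then gives $(a_i,a_{j+1})\in S$. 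This delivers $\binom{\ell_k-1}{2}$ distinct arcs of $S\cap B$ inside each path. \emph{Between two different paths}: for arcs $\alpha=(a,b)$ in path~$k$ and $\beta=(c,d)$ in path~$k'$, the four vertices are distinct; Condition~C2 puts $(a,d),(c,b)\in B$, and the interval-order axiom (valid in any semiorder) forces at least one into $S$. Different choices of $(\alpha,\beta)$ produce disjoint two-element ``choice sets'', so this step yields $\sum_{k<k'}\ell_k\ell_{k'}$ further distinct arcs. Within-path arcs have both endpoints in a common path and cross-path ones do not, so the two families are disjoint. Summing,
\[
|S\cap B|\;\geqslant\;\sum_{k=1}^{c}\binom{\ell_k-1}{2}+\sum_{k<k'}\ell_k\ell_{k'}\;=\;\frac{N^2-3N+2c}{2}.
\]

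The hypothesis $N-|S\cap B|\geqslant 1$ becomes $N^2-5N+2c+2\leqslant 0$; with $1\leqslant c\leqslant N$ this leaves only $(c,N)\in\{(1,1),(1,2),(1,3),(1,4),(2,2),(2,3)\}$, hence $N\in\{1,2,3,4\}$, which proves the first claim. The strict inequality $N-|S\cap B|\geqslant 2$ tightens this to $N^2-5N+2c+4\leqslant 0$, leaving only $(c,N)=(1,2)$ or $(1,3)$. When $(c,N)=(1,2)$, the single path has two arcs $(i,j),(j,k)$ and $|S\cap B|$ must be $0$, giving configuration~$(\alpha)$. When $(c,N)=(1,3)$, the path is $(i,j),(j,k),(k,l)$ and $|S\cap B|$ equals $1$; the within-path step already forces $(i,l)$ into $S\cap B$, so necessarily $S\cap B=\{(i,l)\}$, yielding $(\beta)$.

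The main obstacle is the disjointness bookkeeping for the lower bound on $|S\cap B|$: I need to check carefully that the within- and cross-path families produce genuinely distinct arcs, and that in the cross-path step the choice sets attached to distinct pairs $(\alpha,\beta)$ never overlap. Both properties rest on Condition~C1, which guarantees that distinct paths of $S\cap A$ share no vertex.
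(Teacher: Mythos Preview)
Your proof is correct and follows essentially the same strategy as the paper: both arguments observe that $S\cap A$ is a disjoint union of directed paths, then use Condition~C2 together with the interval-order axiom to exhibit, for every pair of vertex-disjoint arcs in $S\cap A$, a distinct arc forced into $S\cap B$, with Condition~C1 guaranteeing the injectivity needed for the count. Your bookkeeping via the explicit path decomposition $(\ell_1,\dots,\ell_c)$ yields a slightly sharper closed-form bound than the paper's cruder estimate ``each arc is disjoint from at least $m-3$ others,'' and lets you handle the second assertion uniformly rather than ruling out the length-$4$ path by a separate case check, but the underlying mechanism is identical.
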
 

\begin{proof} 
If two pairs $(i,j)$ and $(k,l)$ in $S \cap A$ are disjoint, then by Condition~C2 both $(i,l)$ and $(k,j)$ belong to $B$ and because $S$ is a semiorder, at least one of them belongs to $S$.  By Condition~C1 such a pair $(i,l)$ or $(k,j)$ found to be in $S \cap B$ comes from a unique set of disjoint pairs $(i,j)$, $(k,l)$.  Hence
$$
\sum_{a \in A} \chi^{\RR}_{a} - \sum_{b \in B} \chi^{\RR}_b  
\quad\leqslant\quad 
|S \cap A| - |\{\text{sets of two disjoint pairs in } S \cap A\}|.
$$
Now let $|S \cap A| = m$.  By Condition~C1, $(\Vset{n},A)$ is a PC-graph, and so also $(\Vset{n},S \cap A)$ is a PC-graph (which moreover has no cycle because $S$ is a semiorder).  Hence a pair in $S \cap A$ must be disjoint from at least $m-3$ other pairs in $S \cap A$.  We then derive from previous equation
$$
\sum_{a \in A} \chi^{\RR}_{a} - \sum_{b \in B} \chi^{\RR}_b  
\quad\leqslant\quad 
m - \frac{m(m-3)}{2} \quad=\quad \frac{-m(m-5)}{2}.
$$
The latter expression is positive only for $m \in \{1,2,3,4\}$.  This establishes the first assertion.

Next assume Equation~\eqref{eqn_lemme_2} holds. Then $m \geqslant 2$. For any two disjoint pairs in $S \cap A$, we have shown in the first part of the proof the existence of some pair in $S \cap B$ (with no two of the latter pairs being equal). 
Hence the PC-graph $(\Vset{n},S \cap A)$ has to be a path of length at least $2$, maybe plus isolated vertices.  
If the path has length $4$, say $S \cap A =\{(i,j)$, $(j,k)$, $(k,l)$ $(l,p)\}$, then Condition~C2 implies that $(i,l)$, $(j,p)$ and $(i,p)$ are in $B$, and because they are also in $S$, Equation~\eqref{eqn_lemme_2} cannot hold.  There remain two cases: $(\alpha$) $S \cap A$ is a path of length 2, and then $S \cap B=\es$ in order to make Equation~\eqref{eqn_lemme_2} valid; $(\beta$) $S \cap A$ is a path of length $3$, say $S \cap A =\{(i,j),(j,k),(k,l)\}$; then by Condition~C2 $(i,l)\in B$.  No other pair of $S$ can be in $B$.
\end{proof}

We now prove Theorem~\ref{theo_valid_ineq}.

\begin{proof} 
Independence of Conditions~C1, C2, C4 and C5. Table~\ref{theo_valid_ineq} provides, for each of the four conditions, two subsets $A$, $B$ of $\Arcs{n}$ which satisfy all conditions but the one considered.  The  elements $i$, $j$, $k$, $l$ involved are all distinct.
\begin{table}[h]
$$
\begin{array}{c@{\quad}|@{\quad}c@{\quad}|@{\quad}c}
\text{Condition} & A & B \\
\hline
\text{C1}   & \{(i,k),\;(i,\;l)\}        & \es \\
\text{C2} & \{(i,\;j),\;(k,\;l)\}        & \es \\
\text{C4}  &  \{(i,\;j),\;(j,\;k)\}       & \es \\
\text{C5}   &  \{(i,\;j),\;(j,\;k),\;(k,\;l)\} & \{(i,l),\; (l,\;k),\;(k,\;j),(j,i)\}
\end{array}
$$
\caption{The pairs $A$, $B$ used to prove independence of C1, C2, C4 and C5. \label{tab_independence}}
\end{table}

Necessity of Conditions~C1, C2, C4 and C5. We have already proved in  Theorem~\ref{thm_valid_P} that C1, C2 and C4 are necessary, so we no turn to C5.  We proceed by contradiction: if C5 does not hold at distinct elements $i$, $j$, $k$, $l$, we produce a semiorder on $\Vset{n}$ whose characteristic vector does not satisfy our primary linear inequality \eqref{eqn_general_valid}.  Let  
\begin{align}
T &= \{t\in\Vset{n} \setminus \{i,j,k,l\} \st (i,t) \notin B \text{ and } (t,l) \notin B \};\\
U &= \{ u \in\Vset{n} \setminus \{i,j,k,l\} \st (i,u) \in B\};\\
V &= \{v \in\Vset{n} \setminus \{i,j,k,l\} \st  (v,l) \in B \}.
\end{align}
Notice that $\Vset{n} =  \{i,j,k,l\} \cup T \cup U \cup V$, a disjoint union which we display in Figure~\ref{fig_soTUV}.
Because we assume that C5 does not hold, we have $(i,k)$, $(j,l)\notin B$. For the same reason, $(u,k) \notin B$ and $(u,l) \notin B$ for any $u$ in $U$.  Similarly, for any $v$ in $V$, we have $(i,v) \notin B$ and $(j,v) \notin B$.  Finally, we have $(u,v) \notin B$ for each $u$ in $U$ and each $v$ in $V$.

Figure~\ref{fig_soTUV} displays the Hasse diagram of a semiorder $\RR$ which does not satisfy Equation~\eqref{eqn_general_valid}.  To show that $R$ is indeed a semiorder, we indicate an interval representation of constant length $3$ by providing the initial endpoints of the representing intervals:
$$
\begin{array}{c@{\qquad}c@{\qquad}c@{\qquad}c@{\quad}@{\quad}c@{\quad}cc}
i & j & k &  l & t\in T & u \in U & v \in V\\[2mm]
0 & 4 & 8 & 12 &    6   &    3    &     9
\end{array}
$$
The value at $\chi_{\RR}$ of the left-hand side of Equation~\eqref{eqn_general_valid} equals at least 2, because the pairs $(i,j)$, $(j,k)$ and $(k,l)$ of $\RR$ contribute a $+1$ while only $(i,l)$ can contribute a $-1$. 

\begin{figure}
\begin{center}
\begin{tikzpicture}[yscale=1.5,xscale=2]
\tikzstyle{vertex}=[circle,draw,fill=white, scale=0.3]

  \node (i) at (0,0) [vertex,label=right:$i$] {};
  \node (j) at (0,1) [vertex,,label=left:$j$] {};
  \node (k) at (0,2) [vertex,label=left:$k$] {};
  \node (l) at (0,3) [vertex,label=right:$l$] {};
  \draw (i) -- (j) -- (k) -- (l);

\node (t1) at (-2,1.5) [vertex] {};
\node at (-1.5,1.5){$\dots$};
\node (t2) at (-1,1.5) [vertex] {};
\draw (i) -- (t1) -- (l);
\draw (i) -- (t2) -- (l);

\node (u1) at (1,1) [vertex] {};
\node at (1.5,1){$\dots$};
\node (u2) at (2,1) [vertex] {};

\node (v1) at (1,2) [vertex] {};
\node at (1.5,2){$\dots$};
\node (v2) at (2,2) [vertex] {};

\draw (u1) -- (v1);
\draw (u1) -- (v2);
\draw (u1) -- (k);
\draw (u2) -- (v1);
\draw (u2) -- (v2);
\draw (u2) -- (k);
\draw (j) -- (v1);
\draw (j) -- (v2);

\draw[rectangle, rounded corners] (-2.25,1.25) rectangle (-0.75,1.75);
\node at (-2.5,1.6) {$T$};

\draw[rectangle, rounded corners] (0.75,0.75) rectangle (2.25,1.25);
\node at (2.5,0.8) {$U$};

\draw[rectangle, draw, rounded corners] (0.75,1.75) rectangle (2.25,2.25);
\node at (2.5,2.2) {$V$};
      
\end{tikzpicture}
\end{center}
\caption{The Hasse diagram of the semiorder used in the proof of Theorem~\ref{theo_valid_ineq}. \label{fig_soTUV}}
\end{figure}
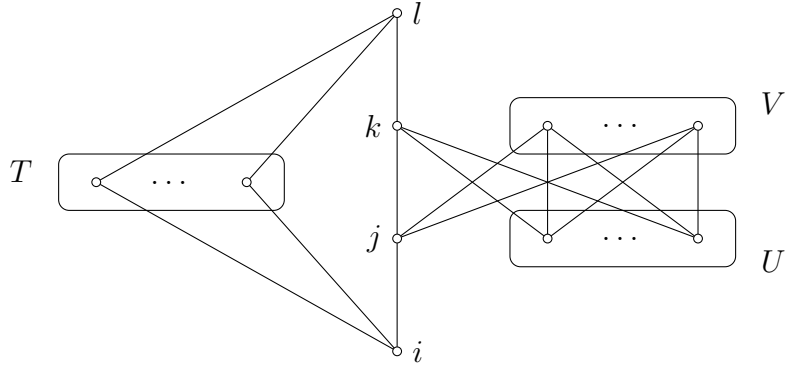

Sufficiency of Conditions~C1, C2, C4 and C5. Assume that the four conditions are satisfied.  We only need to show that any vertex of $\pso{n}$, in other words the characteristic vector $\chi_S$ of any semiorder $S$ on $\Vset{n}$, satisfies Equation~\eqref{eqn_general_valid}.  Suppose to the contrary that the equation is not satisfied  at $\chi_S$.  Then we have one of the two cases $(\alpha)$ and $(\beta)$ in Lemma~\ref{Lem_ex_11}.  

However, Case~$(\alpha)$ cannot occur: by C4, we have either $(i,k)$ in $S \cap B$ or, for some $p$ in $\Vset{n}\setminus\{i,j,k\}$, we have $(i,p)$ or $(p,k)$ in $S \cap B$.
Case~$(\beta)$ cannot neither occur because each requirement in C5 contradicts $S \cap B = \{(i,l)\}$.  Let us verify the assertion for Requirement~(v6), leaving (v1)--(v5) to the reader.  Because $S$ is a semiorder, $(i,u)$ or $(u,k)$ is in $S$.  In the first event, $(i,u)\in S \cap B$.  In the second event, $(u,v)\in S \cap B$ or $(v,l)\in S \cap B$.  We have reached a contradiction.
\end{proof}

For every PC-graph $(\Vset{n},A)$ with $A \neq \Vset{n}$, there is a subset $B$ of $\Arcs{n}$ such that Inequality~\eqref{theo_valid_ineq} is valid: it suffices to take  $B=\Arcs{n}\setminus A$.  Of course, the latter choice gives in general a very ``weak'' inequality: taking a smaller choice for $B$ gives a ``stronger'' inequality.  As a matter of fact, any ``least possible'' choice for $B$ produces a facet-defining inequality: this is the core of the next Theorem~\ref{theo_FDI_pso}.

%%%%%%%%%%%%%%%%%%%%%%%%%%%%%%%%%%%%%%%%%%%%%%%%%%%%%%%%%%%%%%%%%%%%%%
\section{The Primary Facet-Defining Inequalities of the Semiorder Polytope}\label{sec_FDI_PSO} 

Theorem~\ref{theo_FDI_pso} below provides a characterization of those primary linear inequalities which define facets of the polytope $\pso{n}$.  Although unwieldy, the characterization directly leads to a polynomial-time algorithm for deciding whether a given primary linear inequality is facet-defining.   
It is moreover a practical tool for explicitly building primary facet-defining inequalities of $\pso{n}$ (see Theorem~\ref{theo_FDI_described}) and even, in principle, for listing all of them.

To state Theorem~\ref{theo_FDI_pso}, we first define in Example~\ref{ex_excep} the `exceptional' inequalities.  The example makes use of the following simple observation: any permutation of $\Vset n$ induces a permutation of coordinates in $\R^{\Arcs n}$, which leaves $\pso n$ invariant.

\begin{figure}[h]
\begin{center}
\begin{tikzpicture}[scale=5]
  \tikzstyle{vertex}=[circle,draw,fill=white, scale=0.3]
  
  \node (1) at (0,1) [vertex] {};
  \node (2) at (1,1) [vertex] {};
  \node (3) at (1,0) [vertex] {};
  \node (4) at (0,0) [vertex] {};
   \draw (1.west) node [above left] {$1$};
   \draw (2.east) node [above right] {$2$};
   \draw (3.east) node [below right] {$3$};
   \draw (4.west) node [below left] {$4$};
  
  \draw[->-=.7] (1) to [bend right=15] (2);
  \draw[->-=.7] (2) to [bend right=15] (3);
  \draw[->-=.7] (3) to [bend right=15] (4);
  \draw[->-=.7] (4) to [bend right=15] (1);
  
  \draw[->-=.7,dashed] (3) to [bend right=15] (2);
  \draw[->-=.7,dashed] (2) to [bend right=15] (1);
  \draw[->-=.7,dashed] (1) to [bend right=15] (4);
  \draw[->-=.7,dashed] (4) to [bend right=15] (3);
  
  \node (5) at (0.25,0.5) [vertex] {};
  \node (6) at (0.75,0.5) [vertex] {};
   \draw (5.west) node [left] {$5$};
   \draw (6.east) node [right] {$6$};

   \draw[->-=.7,dashed] (5) to (1);
   \draw[->-=.7,dashed] (5) to (2);
   \draw[->-=.7,dashed] (5) to (3);
   \draw[->-=.7,dashed] (5) to (4);
 
   \draw[->-=.5,dashed] (1) to (6);
   \draw[->-=.7,dashed] (2) to (6);
   \draw[->-=.7,dashed] (3) to (6);
   \draw[->-=.5,dashed] (4) to (6);
   \draw[->-=.7,dashed] (6) to (5);
   
         \node (7) at (1.5,0.5) [vertex] {};
          \node  at (1.75,0.5) {\textbf{...}};
           \node (n) at (2,0.5) [vertex] {};
            \draw (7.south) node [below] {$7$};
             \draw (n.south) node [below] {$n$};

\end{tikzpicture}
\end{center}
\caption{\label{fig_exceptional}
A graphical representation of the exceptional inequality from Example~\ref{ex_excep}.}
\end{figure}
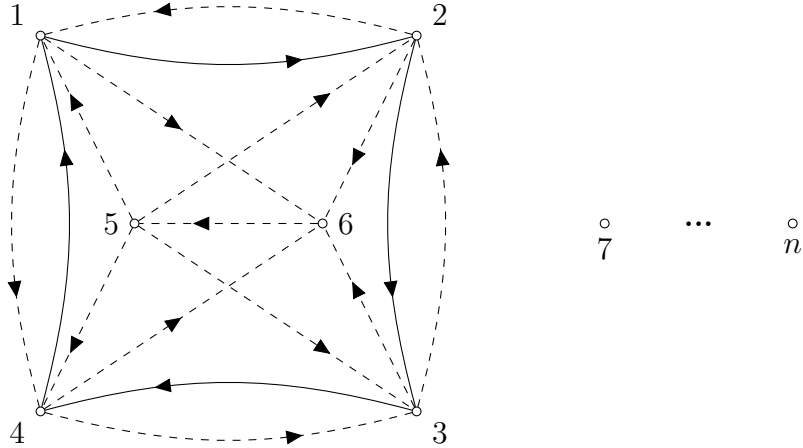

\begin{exam}[The exceptional inequality]\label{ex_excep}
Assume $n\geqslant6$ and let (see also Figure~\ref{fig_exceptional})
\begin{eqnarray}\label{AB_6}
A_6 &=& \{(1,2),\; (2,3),\; (3,4),\; (4,1)\},  \\
B_6 &=& \{(2,1),\; (3,2),\; (4,3),\; (1,4),  \\
  &&   \;\;(5,1),\; (5,2),\; (5,3),\; (5,4), \;(6,5),  \\
  &&   \;\;(1,6),\; (2,6),\; (3,6),\; (4,6)\}.
\end{eqnarray} 
A pair $A$, $B$ of subsets of $\Arcs n$ is \textsl{exceptional} when, possibly after a relabelling of the elements of $\Vset n$, 
there hold $A=A_6$ and $B=B_6$.  A primary linear inequality on $\R^{\Arcs n}$ is \textsl{exceptional} when, possibly after a relabelling of the elements of $\Vset n$, 
it takes the form
\begin{equation}\label{eqn_6}
\sum_{a \in A_6} x_{a} - \sum_{b \in B_6}x_b \leqslant 1.
\end{equation}  
All exceptional inequalities are valid for $\pso n$ (this results from Theorem~\ref{theo_valid_ineq} or can be checked directly).  None of them defines a facet, because the characteristic vector of any semiorder on $\Vset{n}$ containing the pair $(5,6)$ cannot give  equality in Equation~\eqref{eqn_6}, thus all vertices of $\pso n$ satisfying Equation~\eqref{eqn_6} with equality lie also in the face defined by $-x_{(5,6)} \leqslant 0$.  
\end{exam}

\begin{theo}\label{theo_FDI_pso} 
Let $A$ and $B$ be disjoint, nonempty subsets of $\Arcs{n}$.  The primary linear inequality 
\begin{equation}\label{eqn_general}
\sum_{a \in A} x_{a} - \sum_{b \in B}x_b \leqslant 1
\end{equation}  
defines a facet of $\pso n$ if and only if
\begin{enumerate}[\quad\rm (i)]
\item it is not exceptional;
\item $A$, $B$ satisfy Conditions~C1, C2, C4 and C5;
\item for any $c$ in $B$, replacing $B$ with $B\setminus\{c\}$ makes at least one of the Conditions~C1, C2, C4 and C5 becomes false.
\end{enumerate}
\end{theo}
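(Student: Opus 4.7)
Condition~(ii) is immediate from Theorem~\ref{theo_valid_ineq}, since any facet-defining inequality must be valid. For condition~(iii), suppose that some $c\in B$ could be deleted while preserving Conditions~C1, C2, C4 and C5. Then the inequality
$$
\sum_{a\in A} x_a \;-\; \sum_{b\in B\setminus\{c\}} x_b \;\leqslant\; 1
$$
would still be valid for $\pso n$ by Theorem~\ref{theo_valid_ineq}; it would define a proper face of $\pso n$ (the characteristic vector of the singleton semiorder $\{a\}$, for any $a\in A$, attains equality, while the empty semiorder does not), which is moreover distinct from the facet defined by $-x_c\leqslant 0$ (a facet by Proposition~\ref{prop_five_inequalities}). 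Since our original inequality is the sum of these two, Lemma~\ref{lem_1} would contradict its being facet-defining. Condition~(i) is necessary by Example~\ref{ex_excep}, which already observes that exceptional inequalities do not define facets of $\pso n$.

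\textbf{Sufficiency: strategy.} Assume (i), (ii) and (iii), and let $F$ be the face of $\pso n$ defined by~\eqref{eqn_general}. Because $\pso n$ is full-dimensional, $F$ is a facet if and only if every linear form $\gamma\cdot x\leqslant\delta$ that is valid for $\pso n$ and tight on every vertex of $F$ is a nonnegative scalar multiple of our inequality. The plan is to show that, for some $\lambda\geqslant 0$, one has $\gamma_{uv}=\lambda$ for $(u,v)\in A$, $\gamma_{uv}=-\lambda$ for $(u,v)\in B$, $\gamma_{uv}=0$ for $(u,v)\in\Arcs n\setminus(A\cup B)$, and $\delta=\lambda$. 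Each equality is obtained by exhibiting two semiorders $S,S'\in F$ such that $\chi^{S'}-\chi^{S}$ is either the indicator of a single coordinate or a small, controlled combination that isolates one coordinate once the others have been settled; evaluating at any $S\in F$ then pins down $\delta$.

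\textbf{Case analysis and main obstacle.} The required semiorders in $F$ are built by a case analysis on the PC-graph structure of $(\Vset n,A)$, Lemma~\ref{Lem_ex_11} providing a sharp restriction on the possible traces $S\cap A$ for $S\in F$. Coordinates outside $A\cup B$ are handled by starting from a carefully chosen base semiorder $S_0\in F$ and verifying that the corresponding pair can be toggled without leaving $F$; coordinates inside $A$ by comparing tight semiorders in $F$ that do or do not use them; coordinates inside $B$ by exploiting hypothesis~(iii), for the failure of one of C2, C4 or C5 upon deleting $c$ supplies an explicit configuration from which the witness semiorders containing $c$ (and their $c$-free counterparts in $F$) are constructed. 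The main obstacle is the case where $A$ contains a short directed cycle (length $3$ or $4$): there $F$ becomes very constrained and the toggling constructions threaten to fail, and in fact they genuinely fail for the configuration of Example~\ref{ex_excep}, in which the pairs in $B_6$ together with the cycle $A_6$ force $x_{(5,6)}=0$ on all of $F$, so that $F$ is trapped inside the facet $x_{(5,6)}=0$. Hypothesis~(i) excludes precisely this configuration, and the technical core of the proof is the verification that, under (i), (ii) and (iii) taken together, no other analogous accident can occur, so that the constructions indeed go through in every remaining case.
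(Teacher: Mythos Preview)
Your necessity argument is correct and essentially matches the paper's: validity gives (ii) by Theorem~\ref{theo_valid_ineq}, (iii) comes from Lemma~\ref{lem_1}, and (i) is Example~\ref{ex_excep}.

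For sufficiency, the strategy you outline (show that every valid $\gamma\cdot x\leqslant\delta$ tight on $F$ is a nonnegative multiple of~\eqref{eqn_general}) is a legitimate alternative to the paper's primal approach, which instead exhibits $n(n-1)$ explicit semiorders $S_{ij}$, one per coordinate, whose characteristic vectors lie in $F$ and are affinely independent (each $S_{ij}$ contains the pair $(i,j)$ while all earlier ones do not). The two frameworks are equivalent in principle, and the paper's six ``Types'' of pairs correspond closely to your division into coordinates in $A$, in $B$, and outside $A\cup B$; in particular your use of (iii) for $B$-coordinates is exactly the paper's Type~6, which invokes Lemma~\ref{Lem_ex_11} on a semiorder violating the $B\setminus\{c\}$ inequality.

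The genuine gap is that you stop at a strategy. You write that ``the technical core of the proof is the verification that \dots\ no other analogous accident can occur'', but you do not perform that verification, and it is where all the work lies. Two specific points show the distance remaining. First, the paper uses the Lifting Lemma (Theorem~\ref{theo_lifting}) to reduce to the case where every element of $\Vset n$ appears in $A\cup B$; without that reduction your toggling constructions for coordinates involving an isolated element are not even formulated. Second, your localisation of the obstacle to ``$A$ contains a short directed cycle (length $3$ or $4$)'' is not accurate: in the paper the delicate case is its Type~5 (a pair $(i,j)\notin A\cup B$ with $i$ never a tail and $j$ never a head in $A$), and the analysis there (Lemma~\ref{lem_neuf}) shows that under (ii)--(iii) one lands not only on the exceptional $4$-cycle but also on two configurations where $A$ is a directed $3$-\emph{path} $\{(j,k),(k,l),(l,i)\}$ with very specific $B$; those two configurations are facet-defining, but this must be checked separately (the paper does so by a direct count of tight semiorders). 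Your proposal neither detects these path configurations nor indicates how they would be handled, so as written it does not yet constitute a proof.
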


When $A$ and $B$ are disjoint, nonempty subsets of $\Arcs{n}$, and moreover the condition after the ``if and only if'' in Theorem~\ref{theo_FDI_pso} holds, we say that the set $B$ is $A$-\textsl{minimal}.
In view of Theorem~\ref{theo_valid_ineq}, here is a rephrasing of the theorem (again assuming $A\neq \es \neq B$ and $A \cap B =\es$):  Inequality~\eqref{eqn_general} is facet-defining for $\pso n$ if and only it is valid and not exceptional but, for any $c $ in $B$, the following inequality is not valid:
\begin{equation}
\label{eqn_general_A_B_minus_c}
\sum_{a \in A} x_{a} - \sum_{b \in B\setminus\{c\}}x_b \leqslant 1.
\end{equation}
Remember from Theorem~\ref{thm_FDI_P}.2 that for $A=\es$ or $A$ a singleton, Equation~\eqref{eqn_general} never gives a facet-defining inequality.  For $B=\es$, it gives a facet-defining inequality exactly if $A=\{(i,j),(j,i)\}$ for some distinct elements $i$ and $j$.  According to Theorem~\ref{theo_FDI_pso}, the $n$-fence inequality from Example~\ref{exam_n_fence} defines a facet of $\pso n$.

Why do we need to mention the exceptional inequalities in Theorem~\ref{theo_FDI_pso}?  Because they satisfy all the other conditions after the ``if and only if'', but they do not define facets (as we saw in Example~\ref{ex_excep}).

\begin{proof}[Proof of Theorem \ref{theo_FDI_pso}]

Necessity.  When inequality \eqref{eqn_general} defines a facet, Conditions~C1, C2, C4 and C5 necessarily hold (Theorem~\ref{theo_valid_ineq}).  Working now by contradiction, assume that moreover \eqref{eqn_general_A_B_minus_c} is valid for some $c$ in $B$.  By adding to the last inequality the valid inequality $-x_c \leqslant 0$, we get Inequality~\eqref{eqn_general}.  Hence the latter cannot be facet-defining, a contradiction.

Sufficiency.  If Conditions~C1, C2, C4 and C5 hold, then by Theorem~\ref{theo_valid_ineq} inequality \eqref{eqn_general} is valid for $\pso n$.  To prove moreover that \eqref{eqn_general} defines a facet, we also assume that \eqref{eqn_general} is not exceptional and that $B$ is $A$-minimal.  For \eqref{eqn_general_A_B_minus_c} to be non-valid,  $|A|$ must be at least $2$.  If $|A| = 2$, our assumptions imply that \eqref{eqn_general} must be one of the four Axiomatic  Inequalities \eqref{eqn_axiomatic1}--\eqref{eqn_axiomatic4}, which we know to be facet-defining (Theorem~\ref{theo_basic_axiomatic}).

From now on we suppose $|A| \geqslant 3$.  By the Lifting Lemma (Theorem~\ref{theo_lifting}), we may moreover assume that any element from $\Vset{n}$ appears in at least one pair in $A \cup B$.   

Remember $\dim \pso{n}=n(n-1)$.  To show that the Inequality~\eqref{eqn_general} is facet-defining, we will produce $n(n-1)$ semiorders $S_{ij}$, one for each pair $(i,j)$ in $\Arcs{n}$, in such a way that their characteristic vectors are affinely independent and satisfy \eqref{eqn_general} with equality.   
Any pair $(i,j)$ in $\Arcs{n}$ is of exactly one of the following six types (the first type covers the situation where $(i,j)\in A$, the second, third and fifth $(i,j)\notin A \cup B$, the fourth and the sixth $(i,j)\in B$).  Figure~\ref{fig_four_types} illustrates the four first types.

\medskip

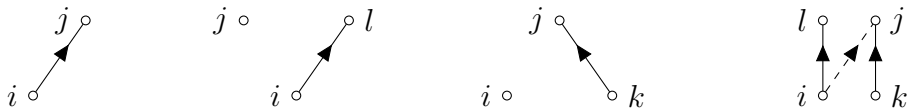
\begin{figure}[h]
\begin{center}
\begin{tikzpicture}[yscale=1,xscale=0.7]
  \tikzstyle{vertex}=[circle,draw,fill=white, scale=0.3]
  
\begin{scope}[xshift=-5cm]
  \node (i1) at (0,0) [vertex,label=left:$i$] {};
  \node (j1) at (1,1) [vertex,,label=left:$j$] {};
  \draw[->-=.7] (i1) -- (j1);
\end{scope}

\begin{scope}[xshift=0cm]
  \node (i2) at (0,0) [vertex,label=left:$i$] {};
  \node (j2) at (-1,1) [vertex,,label=left:$j$] {};
  \node (l2) at (1,1) [vertex,label=right:$l$] {};
    \draw[->-=.7] (i2) -- (l2);
\end{scope}

\begin{scope}[xshift=5cm]
  \node (i3) at (-1,0) [vertex,label=left:$i$] {};
  \node (j3) at (0,1) [vertex,,label=left:$j$] {};
  \node (k3) at (1,0) [vertex,label=right:$k$] {};
  \draw[->-=.7] (k3) -- (j3);

\end{scope}

\begin{scope}[xshift=10cm]
  \node (i4) at (0,0) [vertex,label=left:$i$] {};
  \node (j4) at (1,1) [vertex,label=right:$j$] {};
  \node (l4) at (0,1) [vertex,,label=left:$l$] {};
  \node (k4) at (1,0) [vertex,label=right:$k$] {};
  \draw[->-=.7,dashed] (i4) -- (j4);
  \draw[->-=.7] (k4) -- (j4);
  \draw[->-=.7] (i4) -- (l4);
\end{scope}

\end{tikzpicture}
\end{center}
\caption{\texttt{Types~1-4} of pairs $(i,j)$ in the proof of  Theorem~\ref{theo_FDI_pso}.\label{fig_four_types}}
\end{figure}

%%%%%%%%%%%
\noindent\texttt{Type~1\,:} $(i,j) \in A$. We then take the semiorder $S_{ij}=\{(i,j)\}$. 

\medskip

%%%%%%%%%%%
\noindent\texttt{Type~2\,:}
$(i,j) \notin A \cup B$ but $(i,l) \in A$ for some $l$ in $\Vset{n}$.  By Condition~C1, there can be only one such element $l$.  We let $S_{ij}=\{(i,j), (i,l)\}$.

\medskip

%%%%%%%%%%%
\noindent\texttt{Type~3\,:} $(i,j) \notin A \cup B$, $(i,l)\in A$ for no element $l$ and $(k,j) \in A$ for some element $k$ in $\Vset{n}$ (such a $k$ is unique by Condition~C1). We let $S_{ij}=\{(i,j), (k,j)\}$.

\medskip

%%%%%%%%%%%
\noindent\texttt{Type~4\,:} $(i,j) \in B$ and there exist some pairs $(i,l)$, $(k,j)$ in $A$ with $k\neq l$.  Notice that the latter pairs are unique.  We then let $S_{ij}=\{(i,l), (k,j), (i,j)\}$. 

% The case $k=l$ is handled in Type 6 below !!!!

\medskip

%%%%%%%%%%%
\noindent\texttt{Type~5\,:} $(i,j) \notin A \cup B$ and $(i,j)$ is not of \texttt{Types~2 or 3}.  This implies $(i,u), (v,j)\notin A$ for all $u$, $v$ in $\Vset{n}$.  Because of $|A| \geqslant 3$ and of Conditions~C1, there must exist $k$ and $l$ in $\Vset{n}\setminus\{i,j\}$ such that $(k,l) \in A$. 

Suppose first that it is possible to select such a pair $(k,l)$ with moreover $(i,l) \notin B$ or $(k,j) \notin B$.  We then take the semiorder $S_{ij}=\{(i,j), (k,l), z\}$, the pair $z$ being $(i,l)$ in the first case and $(k,j)$ in the second one. 
 
Suppose next no such choice of $(k,l)$ exists, that is $(k,j), (i,l) \in B$ for each pair $(k,l)$ in $A$ with $k,l \notin \{i,j\}$.
In this case, we claim that $A$ equals $\{(j,k),$ $(k,l),$ $(l,i)\}$ and that $B$ is either equal to $\{(j,i),$ $(i,l),$ $(l,k),$ $(k,j),$ $(j,t),$ $(t,i)\}$, for some $t \in \Vset{n} \setminus \{i,j,k,l\}$,
 or to $\{(j,i)$, $(i,l)$, $(l,k)$, $(k,j)$, $(j,u)$, $(u,v)$, $(v,i)\}$, for some $u$, $v \in \Vset{n} \setminus \{i,j,k,l\}$, with $u$ and $v$ distinct; moreover, in both cases, the resulting inequality is facet-defining.  The proof of the latter assertions being long, we  defer them to Lemmas~\ref{lem_technique} and \ref{lem_neuf}.

\medskip

%%%%%%%%%%%
\noindent\texttt{Type~6\,:} $(i,j)$ is of none of the previous \texttt{Types~1-5}.  Then necessarily $(i,j) \in B$.
By our basic assumption, $A$ and $B \setminus \{(i,j)\}$ do not satisfy Conditions~C1, C2, C4 or C5. Notice that Conditions~C1 remains true (because $A$ is not modified) and Condition~C2 also because $(i,j)$ is not of \texttt{Type~4}.  

Now the inequality
\begin{equation}
\label{eqn_general_A_B_minus_(i,j)}
\sum_{a \in A} x_{a} - \sum_{b \in B\setminus\{(i,j)\}}x_b \leqslant 1
\end{equation}
is not valid for $\pso n$, and so by Lemma~\ref{Lem_ex_11} there exists some semiorder $S$ on $\Vset{n}$ such that one of the two following holds:

\noindent\quad$\alpha$)~$S \cap A$ is a path of length $2$, and $S \cap (B \setminus\{(i,j)\}) = \es$.  Then we must have $S \cap B = \{(i,j)\}$, so we attach the semiorder $S$ to the pair $(i,j)$.

\noindent\quad$\beta$)~$S \cap A$ is a path of length 3, say $S \cap A =\{(u,v),(v,k),(k,l)\}$, and moreover $S \cap (B \setminus\{(i,j)\}) = \{(u,l)\}$.  Notice that the pair $(u,l)$ is of \texttt{Type~4}.  Also, $S \cap B = \{(u,l),(i,j)\}$. We attach the semiorder $S$ to the pair $(i,j)$.

At this point, we have attached to any pair $(i,j)$ in $\Arcs{n}$ some semiorder $S_{ij}$ on $\Vset{n}$---except in the singular situation as in the second part of \texttt{Type~5}, for which the conclusion results from the following two lemmas.  The characteristic vectors of all the $n(n-1)$ semiorders $S_{ij}$ satisfy Equation~\eqref{eqn_general} with equality.
They are moreover affinely independent. Indeed, the semiorder $S_{ij}$ contains the pair $(i,j)$ while all previously constructed semiorders do not contain that particular pair $(i,j)$; in other words, the characteristic vector of $S_{ij}$ has a $1$ in component $(i,j)$ while all the previous characteristic vectors have a $0$.
\end{proof}
%--------------------------------------------------

The two lemmas below complete the handling of \texttt{Type~5} in the previous proof.

\begin{lemm}\label{lem_technique}
Assume that $A$ and $B$ are disjoint subsets of $\Arcs{n}$ which satisfy Conditions~C1 and C2. If $A$ contains pairs $(i,j)$, $(j,k)$ and $(u,v)$, $(v,w)$ such that
$$
\{(i,j), (j,k)\} \cap \{(u,v), (v,w)\} = \varnothing
$$
then $A$ and $B$ satisfy also Condition~C4 at the given elements $i$, $j$ and $k$.  Similarly, if $A$ contains pairs $(i,j)$, $(j,k)$, $(k,l)$ and $(u,v)$, $(v,w)$ such that
$$
\{(i,j), (j,k), (k,l)\} \cap \{(u,v), (v,w)\} = \varnothing
$$
then $A$ and $B$ satisfy also Condition~C5 at the given elements $i$, $j$, $k$ and $l$. 
\end{lemm}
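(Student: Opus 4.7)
The plan is to use $v$ as the single witness element, setting $p = v$ in Condition~C4 for the first claim and $r = v$ in clause (v3) of Condition~C5 for the second. Two steps suffice, and both rest on Conditions~C1 and C2 only.

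First I would use Condition~C1 to establish that $v$ is distinct from every element among $i$, $j$, $k$ (and, in the second claim, from $l$ as well). Because each of $(u,v)$ and $(v,w)$ is a pair of $A$ distinct from each of $(i,j)$, $(j,k)$ (and $(k,l)$), C1 directly supplies the inequalities $v \neq j$ (from $(u,v)$ and $(i,j)$), $v \neq k$ (from $(u,v)$ and $(j,k)$), $v \neq i$ (from $(v,w)$ and $(i,j)$), and in the second claim $v \neq l$ (from $(u,v)$ and $(k,l)$).

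Second I would apply Condition~C2 twice to produce the pairs $(i,v)$ and $(v,k)$ in $B$. One first checks that $\{i,j,u,v\}$ and $\{j,k,v,w\}$ each consist of four pairwise distinct elements, which again reduces to a routine tally of C1 applications together with the fact that the two endpoints of any pair in $\Arcs n$ differ. Then C2 applied to $(i,j),(u,v) \in A$ yields $(i,v) \in B$, and C2 applied to $(j,k),(v,w) \in A$ yields $(v,k) \in B$.

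With $(i,v), (v,k) \in B$ and $v \notin \{i,j,k\}$, Condition~C4 is verified at $i,j,k$ with $p = v$. In the second claim, the same two pairs in $B$ together with $v \notin \{i,j,k,l\}$ are precisely what clause (v3) of Condition~C5 requires, with $r = v$. I expect no real difficulty; the only care needed is bookkeeping in the first step, since C1 only forbids coincidences of first-versus-first or second-versus-second coordinates, so each needed distinctness must be obtained from an appropriately chosen pair of pairs in $A$.
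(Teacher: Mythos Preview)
Your proposal is correct and follows essentially the same route as the paper: both use $v$ as the witness element, argue via C1 that the relevant pairs $(i,j),(u,v)$ and $(j,k),(v,w)$ are vertex-disjoint, and then apply C2 to obtain $(i,v),(v,k)\in B$. The paper's proof is simply terser (and contains a typo, writing ``Condition~C3'' where C2 is meant); your version spells out the distinctness bookkeeping explicitly, which is harmless and arguably clearer.
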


\begin{proof}
By Condition~C1, $(\Vset{n},A)$ is a PC-graph. So $(i,j)$ and $(u,v)$ are disjoint, as well as $(j,k)$ and $(v,w)$.  Condition~C3 then implies that $(i,v)$ and $(v,k)$ are in $B$; this establishes Condition~C4 at $i$, $j$ and $k$.  The proof of the second assertion is similar.
\end{proof}

\begin{lemm}\label{lem_neuf} 
Consider nonempty, disjoint subsets $A$ and $B$ of $\Arcs{n}$ which satisfy Conditions~C1, C2, C4 and C5, with moreover $|A|\geqslant 3$,  $B$ being $A$-minimal and $A$, $B$ not exceptional (in the sense of Example~\ref{ex_excep}).  Suppose that there exists $(i,j)$ in $\Arcs{n}\setminus(A \cup B)$ such that 
\begin{enumerate}[\quad(I).~]
\item[\textnormal{(I)}.]
$(i,p)\notin A$ for all $p$ in $\Vset{n}$;
\item[\textnormal{(II)}.]
$(q,j)\notin A$ for all $q$ in $\Vset{n}$;
\item[\textnormal{(III)}.]
$(k,j), (i,l) \in B$ for each pair $(k,l)$ in $A$ disjoint from $(i,j)$.
\end{enumerate}
Then there exist $k$, $l \in \Vset{n}$ with $i$, $j$, $k$, $l$ pairwise distinct such that
$$
A = \{(j,k), (k,l), (l,i)\}
$$ 
and either of two cases: 
\begin{enumerate}[\quad\rm 1.~]
\item $B = \{(j,i),$ $(i,l),$ $(l,k),$ $(k,j),$ $(j,t),$ $(t,i)\}$ for some $t \in \Vset{n} \setminus \{i,j,k,l\}$, 
\item $B = \{(j,i)$, $(i,l)$, $(l,k)$, $(k,j)$, $(j,u),$ $(u,v),$ $(v,i)\}$ for some distinct $u$, $v$ in $\Vset{n} \setminus \{i,j,k,l\}$. 
\end{enumerate}
In both cases, Inequality~\eqref{eqn_general} defines a facet of $\pso n$. 
\end{lemm}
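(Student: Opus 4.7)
The plan has four parts: (i) pin down the graph structure of $A$ using (I), (II) and Condition~C1; (ii) show that arcs of $A$ disjoint from $(i,j)$ produce elements of $B$ via (III) that $A$-minimality cannot justify, forcing $A$ to be a single path from $j$ to $i$ of length three; (iii) enumerate the possible choices of $B$ arising from the six options of Condition~C5; and (iv) verify the facet property by an affinely independent construction.

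For (i)--(ii) I would argue as follows. Since $(\Vset{n}, A)$ is a PC-graph, (I) puts $i$ at the end of its component (no outgoing arc) and (II) puts $j$ at the beginning of its component (no incoming arc). Any arc $(p,q) \in A$ with $\{p,q\} \cap \{i,j\} = \varnothing$ is, by (III), accompanied by $(p, j)$ and $(i, q)$ in $B$. A direct inspection of C1, C2, C4 and C5 shows that such pairs cannot be required by the validity conditions unless $(p,q)$ sits on an $A$-path from $j$ to $i$: the standard patterns that would force $(p, j)$ into $B$ (for example, through a C4 path using $j$ as intermediary) require some arc with head $j$ in $A$, contradicting~(II). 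Piecing this together, $A$ must consist of a single path $j = u_0 \to u_1 \to \cdots \to u_m = i$. The bound $|A| \geqslant 3$ gives $m \geqslant 3$; for $m \geqslant 4$, I would list the elements that (III) and C2 inject into $B$, determine an $A$-minimal $B' \subseteq B$ by resolving each C4 and C5 requirement using already-present pairs, and observe that some element required by (III)---for instance $(i, u_2)$ when $m = 4$, because every C4 and C5 constraint involving it admits an alternative resolution---is absent from $B'$, contradicting the hypothesis. Hence $A = \{(j, k), (k, l), (l, i)\}$.

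For (iii) the calculation goes as follows. Condition~C2 applied to the disjoint pair $(j, k), (l, i)$ forces $\{(j, i), (l, k)\} \subseteq B$, and (III) applied to $(k, l)$ forces $\{(k, j), (i, l)\} \subseteq B$. These four pairs satisfy both C4 constraints (via $p = i$ and $p = j$ respectively), but Condition~C5 on the full path $(j, k), (k, l), (l, i)$ is not yet met, so one of the options (v1)--(v6) must be invoked. I would check each: choosing (v1) makes $(i, l)$ redundant (C4 on $(j, k), (k, l)$ is then satisfied directly by $(j, l) \in B$), choosing (v2) makes $(k, j)$ redundant, choosing (v3) again makes $(i, l)$ redundant through the new intermediary $r$, and choosing (v4) makes $(k, j)$ redundant. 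In each of these four cases, $A$-minimality fails because (III) still demands the superfluous pair. Only (v5) and (v6) remain compatible with the hypotheses, yielding exactly Cases~1 and 2 of the lemma.

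For (iv) I would construct $n(n-1)$ affinely independent semiorders satisfying the inequality with equality, following the \texttt{Type~1}--\texttt{Type~6} scheme of the main proof of Theorem~\ref{theo_FDI_pso} applied to this specific pair $(A, B)$. The hard part of the whole argument will be step~(iii): checking each of (v1)--(v4) against $A$-minimality requires tracing how removing a given pair of $B$ interacts with every remaining C4 and C5 constraint, and the $A$-minimality of the $B$ produced by (v5) and (v6) must likewise be verified pair-by-pair.
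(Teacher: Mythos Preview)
Your outline for part~(iii) matches the paper's argument in the case where $A$ is already known to equal $\{(j,k),(k,l),(l,i)\}$, and part~(iv) is handled the same way (the paper also leaves the explicit enumeration to the reader). The real gap is in parts~(i)--(ii).

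You assert that a pair such as $(p,j)\in B$ ``require[s] some arc with head~$j$ in~$A$, contradicting~(II).'' This is false: Requirement~(v6) of Condition~C5 introduces a middle pair $(u,v)$ into $B$ with no adjacency constraint on arcs of~$A$ at either $u$ or $v$. So $(p,j)$ can be forced into $B$ as the $(u,v)$ of some C5 instance even though no arc of $A$ ends at~$j$. The paper's proof confronts exactly this: starting from an arbitrary $(k,l)\in A$ disjoint from $(i,j)$, it uses $A$-minimality of $(k,j)$ and $(i,l)$ to produce further arcs $(r,k),(l,s)\in A$, and then branches on whether $r=j$ and $s=i$. Only when both equalities hold does one reach the path $\{(j,k),(k,l),(l,i)\}$; the other branches are eliminated case by case, and the most delicate of them (the paper's Case~3, where eventually a $4$-cycle in $A$ emerges) is precisely where the ``not exceptional'' hypothesis is invoked. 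Your argument never meets this hypothesis, which is a sign that the configuration producing it has been swept under the rug. Concretely, the exceptional pair $A_6,B_6$ satisfies (I)--(III) with $(i,j)=(5,6)$, yet $A_6$ is a $4$-cycle, not a path from $j$ to~$i$; your step~(ii) would wrongly exclude it before the hypothesis can act.

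To repair the proof you must either reproduce the paper's local analysis around $(r,k),(k,l),(l,s)$ and its four-case elimination, or give a genuinely different argument that accounts for the (v6) mechanism and explicitly uses non-exceptionality.
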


\begin{proof}
The specific inequalities in the two cases are valid in view of Theorem~\ref{theo_valid_ineq}.  To prove that they define facets, it suffices to exhibit respectively $5\cdot4 = 20$ and $6\cdot5 = 30$ semiorders with affinely independent characteristic vectors satisfying \eqref{eqn_general} with equality: in view of Theorem~\ref{theo_lifting}, it suffices to work with $\Vset{n}$ equal to $\{i$, $j$, $k$, $l$, $t\}$ or $\{i$, $j$, $k$, $l$, $u$, $v\}$, respectively.  We leave this to the reader.

We now show that if $A$ and $B$ satisfy the assumptions, then they are of one of the two latter forms.  Because of $|A| \geqslant 3$ and Conditions~C1 in Theorem~\ref{theo_valid_ineq}, there must exist $k$ and $l$ in $\Vset{n}\setminus\{i,j\}$ such that $(k,l) \in A$. 

First, let us infer the existence of some element $s$ in $\Vset{n}\setminus\{j,k,l\}$ such that $(l,s)\in A$. 
If no such element $s$ exists in $A$ we derive, from the present Assumption~(II) together with our $A$-minimality assumption (applied to the pair $(k,j)$ in $B$), that $(k,j)$ cannot be anything else than a pair as $(u,v)$ in Condition~C5: as shown in Figure~\ref{fig_ill_type_5}, there exist distinct elements $w$, $x$, $y$, $z$ in $\Vset{n}\setminus\{k,j\}$ such that the pairs $(w,x)$, $(x,y)$, $(y,z)$ are in $A$ and $(w,k)$, $(j,z)$ in $B$.  
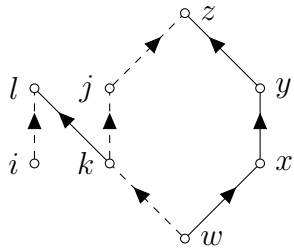
\begin{figure}[h]
\begin{center}
\begin{tikzpicture}[xscale=1,yscale=1]
  \tikzstyle{vertex}=[circle,draw,fill=white, scale=0.3]
  
   \node (i) at (-1,0) [vertex,label=left:{$i$}] {};
   \node (j) at (0,1) [vertex,label=left:{$j$}] {};
   \node (k) at (0,0) [vertex,label=left:$k$] {};
   \node (l) at (-1,1) [vertex,label=left:$l$] {};
  
   \node (w) at (1,-1) [vertex,label=right:{$w$}] {};
   \node (x) at (2,0) [vertex,label=right:{$x$}] {};
   \node (y) at (2,1) [vertex,label=right:{$y$}] {};
   \node (z) at (1,2) [vertex,label=right:{$z$}] {};
   
  \draw[->-=.7] (k) -- (l);
  \draw[->-=.7,dashed] (k) -- (j);
  \draw[->-=.7,dashed] (i) -- (l);

  \draw[->-=.7,dashed] (w) -- (k);
  \draw[->-=.7,dashed] (j) -- (z);

  \draw[->-=.7] (w) -- (x);
  \draw[->-=.7] (x) -- (y);
  \draw[->-=.7] (y) -- (z);

\end{tikzpicture}
\end{center}
\caption{First illustration for the proof of Lemma~\ref{lem_neuf}.\label{fig_ill_type_5}}
\end{figure}
Moreover, we must have $(k,l)$ and $(x,y)$ disjoint.  Then by Condition~C1, $(k,y) \in B$.  This shows that the deletion of $(k,j)$ never invalidates Condition~C5 (whatever the choices of $w$, $x$, $y$ and $z$), a contradiction with our present assumption in the statement.  We conclude that for some element $s$ in $\Vset{n}\setminus \{j,k,l\}$ the pair $(l,s)$ is in $A$.
Similarly (this time because $B\setminus\{(i,l)\}$ is $A$-minimal), there is some $r$ in $\Vset{n}\setminus \{i,k,l\}$ such that $(r,k) \in A$ (see Figure~\ref{fig_ill_type_5_bis}).
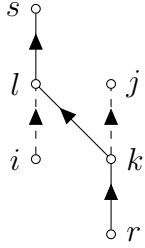
\begin{figure}[h]
\begin{center}
\begin{tikzpicture}[xscale=1,yscale=1]
  \tikzstyle{vertex}=[circle,draw,fill=white, scale=0.3]
  
   \node (i) at (-1,0) [vertex,label=left:{$i$}] {};
   \node (j) at (0,1) [vertex,label=right:{$j$}] {};
   \node (k) at (0,0) [vertex,label=right:$k$] {};
   \node (l) at (-1,1) [vertex,label=left:$l$] {};
  
   \node (r) at (0,-1) [vertex,label=right:{$r$}] {};
   \node (s) at (-1,2) [vertex,label=left:{$s$}] {};
   
  \draw[->-=.7] (k) -- (l);
  \draw[->-=.7,dashed] (k) -- (j);
  \draw[->-=.7,dashed] (i) -- (l);

  \draw[->-=.7] (r) -- (k);
  \draw[->-=.7] (l) -- (s);
   
\end{tikzpicture}
\end{center}
\caption{Second illustration for the proof of Lemma~\ref{lem_neuf}. Here the following equalities may occur: $i=s$, $j=r$, $r=s$.\label{fig_ill_type_5_bis}}
\end{figure}
Again by our present assumptions on the pair $(i,j)$, we know $r \neq i$ and $s \neq j$.  However $r$ might be equal to $j$ and $s$ might be equal to $i$.  We could also have $r=s$ in which case the two previous equalities cannot hold together.

\smallskip  

Suppose first that both equalities $r=j$ and $s=i$ hold (as in Figure~\ref{fig_t_5}).  
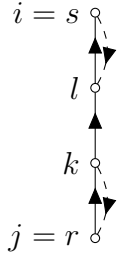
\begin{figure}[h]
\begin{center}
\begin{tikzpicture}[xscale=1,yscale=1]
  \tikzstyle{vertex}=[circle,draw,fill=white, scale=0.3]

   \node (j) at (0,0) [vertex,label=left:{$j=r$}] {};
   \node (k) at (0,1) [vertex,label=left:$k$] {};
   \node (l) at (0,2) [vertex,label=left:$l$] {};
   \node (i) at (0,3) [vertex,label=left:{$i=s$}] {};
  
  \draw[->-=.7] (j) -- (k);
  \draw[->-=.7] (k) -- (l);
  \draw[->-=.7] (l) -- (i);
  
  \draw[->-=.7, dashed] (i) to [bend left] (l);
  \draw[->-=.7, dashed] (k) to [bend left] (j);
\end{tikzpicture}
\end{center}
\caption{Third illustration for the proof of Lemma~\ref{lem_neuf}: when $i=s$ and $j=r$.\label{fig_t_5}}
\end{figure}
We prove that the Lemma holds by first etablishing $A = \{(j,k), (k,l), (l,i)\}$.  Suppose that $(x,y)$ is another pair of $A$.  Then $(i,y) \in B$ by Assumption~(III) and we will prove that replacing $B$ with $B \setminus \{(i,y)\}$ leaves a valid inequality, a contradiction with the $A$-minimality of $B$.  If replacing $B$ with $B \setminus \{(i,y)\}$ gives a nonvalid inequality, then either Condition~C4 is not satisfied or Condition~C5 is not satisfied by $A$ and $B \setminus \{(i,y)\}$. 
 
If Condition~C4 is not, then $(w,x) \in A$ for some $w$ distinct from $x,$ $y,$ $j,$ $k,$ $l,$ $i$. But this is impossible since then $(w,l)$ and $(l,y)$ are in $B$ by Condition~C2, and hence Condition~(C4) is satisfied by $A$ and $B \setminus \{(i,y)\}$..

So if $B \setminus \{(i,y)\}$ gives a nonvalid inequality, then Condition~C5 is not satisfied.  But then the three consecutive pairs of this condition must be all disjoint from $(j,k)$, $(k,l)$ and $(l,i)$, a contradiction by Lemma~\ref{lem_technique}.  This proves $A = \{(j,k)$, $(k,l)$, $(l,i)\}$.

Next, remember $(i,l)$, $(k,j)\in B$.  Moreover, by Condition~C2, $(j,i)$, $(l,k) \in B$.  We now apply Condition~C5 to the pairs $(j,k),$ $(k,l),$ $(l,i)$ to deduce that we are in one of the two cases of the statement of the lemma we are proving.  Actually, these two cases correspond to Requirements~(v5) and (v6) in Condition~C5.  Hence, we only have to show that if Requirements~(v1) to (v4) are satisfied we have a contradiction with the $A$-minimality of $B$.  Suppose Requirement~(v1) or Requirement~(v3) is satisfied.  Then $B$ is not $A$-minimal, because replacing $B$ with $B \setminus \{(i,l)\}$ still preserves Conditions~C1, C2, C4, C5.  The argument is similar for Requirements~(v2) and (v4), with the pair $(k,j)$ of $B$.  By the $A$-minimality of $B$, there is no further pair in $B$.  This concludes the proof when $r=j$ and $s=i$.

\smallskip

If $r \neq j$ or $s \neq i$, the situation must be as in one of the cases of Figure~\ref{ms_neq_ij}.  We deduce a contradiction between Assumption~(III) and  the $A$-minimality of $B$ in each case.
\begin{figure}[h]
\begin{center}
\begin{tikzpicture}[yscale=1]
  \tikzstyle{vertex}=[circle,draw,fill=white, scale=0.3]
  
  \node at (-0.2,3.7) {{Case 1}}; 
  \node (z_r) at (0,0) [vertex] {};
  \node (z_k) at (0,1) [vertex] {};
  \node (z_l) at (0,2) [vertex] {};
  \node (z_i) at (0,3) [vertex] {};
   \node (z_j) at (1,2) [vertex] {};
  
   \draw[->-=.7] (z_r) to (z_k);
   \draw[->-=.7] (z_k) to (z_l);
   \draw[->-=.7] (z_l) to (z_i);
   \draw[->-=.7, dashed] (z_k) to (z_j);
   \draw[->-=.7, dashed, bend left] (z_i) to (z_l);
     
   \draw (z_r.west) node [left] {$r$};
   \draw (z_k.west) node [left] {$k$};
   \draw (z_l.west) node [left] {$l$};
   \draw (z_i.west) node [left] {$s=i$};
   \draw (z_j.north) node [right] {$j$};
   
% -----------------------------
  
\begin{scope}[xshift=3.5cm]
  \node at (-0.4,3.7) {{Case 2}}; 
\begin{scope}[yshift=0.5cm]
  \node (z_r) at (-1,0) [vertex] {};
  \node (z_k) at (-1,2) [vertex] {};
  \node (z_l) at (0,2) [vertex] {};
  
  \node (z_i) at (1,0) [vertex] {};
  \node (z_j) at (0,0) [vertex] {};
  
   \draw[->-=.7] (z_r) to (z_k);
   \draw[->-=.7] (z_k) to (z_l);
   \draw[->-=.7] (z_l) to (z_r);
   \draw[->-=.7, dashed] (z_k) to (z_j);
   \draw[->-=.7, dashed] (z_i) to (z_l);
     
   \draw (z_r) node [below] {$r = s$};
   \draw (z_k) node [above] {$k$};
   \draw (z_l) node [above] {$l$};
   \draw (z_i) node [below] {$i$};
   \draw (z_j) node [below] {$j$};
\end{scope}
\end{scope}
 
 %------------------------
 
\begin{scope}[xshift=7cm]

  \node at (0,3.7) {{Case 3}}; 
  \node (z_s) at (0,3) [vertex] {};
  \node (z_l) at (0,2) [vertex] {};
  \node (z_k) at (0,1) [vertex] {};
  \node (z_r) at (0,0) [vertex] {};
  
   \node (z_i) at (-1,1) [vertex] {};
   \node (z_j) at (1,2) [vertex] {};
  
   \draw[->-=.7] (z_r) to (z_k);
   \draw[->-=.7] (z_k) to (z_l);
   \draw[->-=.7] (z_l) to (z_s);
   
   \draw[->-=.7, dashed] (z_k) to (z_j);
   \draw[->-=.7, dashed] (z_i) to (z_l);
     
   \draw (z_r.west) node [left] {$r$};
   \draw (z_k.west) node [left] {$k$};
   \draw (z_l.west) node [left] {$l$};
   \draw (z_s.west) node [left] {$s$};
   \draw (z_j.east) node [right] {$j$};
   \draw (z_i.east) node [left] {$i$};
\end{scope}
   
% %------------------------

\begin{scope}[xshift=10.5cm]
  \node at (-0.5,3.7) {{Case 4}}; 
  \node (z_s) at (0,3) [vertex] {};
  \node (z_l) at (0,2) [vertex] {};
  \node (z_k) at (0,1) [vertex] {};
  \node (z_r) at (0,0) [vertex] {};
  \node (z_i) at (-1,1) [vertex] {};
  
   \draw[->-=.7] (z_r) to (z_k);
   \draw[->-=.7] (z_k) to (z_l);
   \draw[->-=.7] (z_l) to (z_s);
   
   \draw[->-=.7, dashed, bend left] (z_k) to (z_r);
   \draw[->-=.7, dashed] (z_i) to (z_l);
     
   \draw (z_r.west) node [left] {$r=j$};
   \draw (z_k.west) node [left] {$k$};
   \draw (z_l.east) node [right] {$l$};
   \draw (z_s.east) node [right] {$s$};
   \draw (z_j.east) node [right] {$j$};
   \draw (z_i.west) node [left] {$i$};
\end{scope}
   
\end{tikzpicture}
\end{center}
\caption{Fourth illustration for the proof of Lemma~\ref{lem_neuf}: the four cases where $s \neq i$ or $r \neq j$.\label{fig_four_cases}}
\label{ms_neq_ij}
\end{figure}
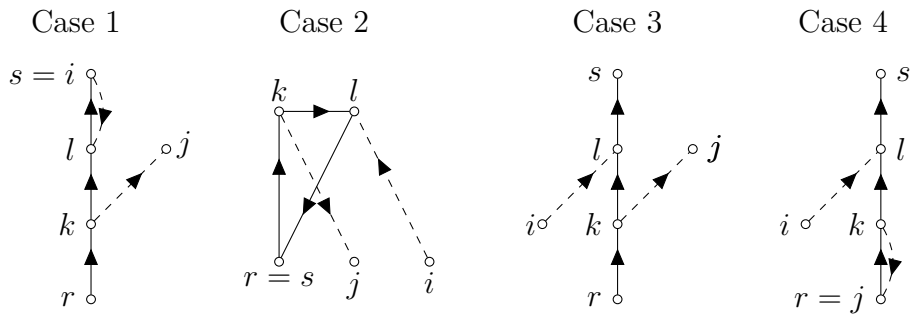
By Assumption~(III), the pair $(i,l)$ belongs to $B$; when $r \neq j$, the same assumption (after replacement of $(k,l)$ with $(r,k)$) gives $(r,j)\in B$.  

Consider first Case~2 of Figure~\ref{fig_four_cases}. We know that $B \setminus \{(r,j)\}$ is not valid. Either Condition~C4 or Condition~C5 is not satisfied by $B \setminus \{(r,j)\}$. By Lemma \ref{lem_technique}, $A$ does not contain two consecutive pairs $(u,v), (v,w)$ that are disjoint from $(r,k)$ and $(k,l)$
 (this would imply that $B \setminus \{(r,j)\}$ still satisfies Conditions~C1, C2, C4, C5, a contradiction). Hence $A$ does not contain three consecutive pairs $(u,v), (v,w), (w,z)$ with $u,v,w,z$ pairwise distinct, and we know that it is Condition~C4 that is not satisfied by $B \setminus \{(r,j)\}$. This implies then that $(j,l) \in B$ and then $B \setminus \{(i,l)\}$ is still valid. This gives the desired contradiction.

Cases~1 and 4 are similar, so we only treat Case~4. We claim that $B \setminus \{(l,j)\}$ is still valid (here, $(l,j)\in B$ because of Assumption~(III) and $(l,s)\in A$). If not, because $A$ and $B \setminus \{(l,j)\}$ satisfy Condition~C4, it must be Condition~C5 which is not satisfied and there exist pairs $(u,v)$, $(v,w)$, $(w,x)$ in $A$.  These pairs must be different from $(j,k)$ and $(k,l)$ and hence Condition~C5 must in fact be satisfied for $\{(u,v)$, $(v,w)$, $(w,x)\}$, a contradiction.

Finally, consider Case~3.  Exactly as we derived in the beginning of the present proof the existence of the element $s$ from the pair $(k,l)$ in $A$, we derive the existence of an element $t$ with $(s,t)\in A$ from the pair $(l,s)$ in $A$.  

Here is a general argument which we will be using several times.  Assume $A'$ and $B'$ satisfy Conditions~C1 and C2, where we write primes to make the distinction with our present notation.  Consider in Condition~C5 the given, distinct elements $i'$, $j'$, $k'$ and $l'$ and also a pair $(u',v')$ whose existence is asserted in Requirement~(v6).  If for some element $p'$ in $\Vset{n}\setminus\{j',k'\}$ we have $(u',p')$ in $A'$, then Requirement~(v3) holds (because $(u',p')$ and $(j',k')$ are disjoint pairs in $A'$).  Similarly, if for some element $q'$ in $\Vset{n}\setminus\{j',k'\}$ we have $(q',v')$ in $A'$, then Requirement~(v4) is met.

If $t\neq r$ we derive that $A$ and $B\setminus\{(i,l)\}$ satisfy Conditions~C1, C2, C4 and C5, in contradiction with our assumptions.  Indeed, $A$ and $B$ satisfy Conditions~C1, C2, C4 and C5; by Assumption~(I) and $(r,s)$, $(s,l)\in B$, we see that $A$ and $B\setminus\{(i,l)\}$ can only invalidate Condition~C5 with $(i,l)$ as $(u,v)$ in Requirement~(v6).  But the general argument from previous paragraph with $(k,l)\in A$ rules this out.  

If $t=r$, the pairs $(r,k)$, $(k,l)$, $(l,s)$, $(s,r)$ form a cycle of length $4$.  Assumptions~(I) and (II) imply $(i,p)$, $(p,j)\in B$ for any $p$ in $\{r$, $k$, $l$, $s\}$.  Thus we may take advantage of the cyclic symmetry w.r.t.\ $r$, $k$, $l$, $s$.
Notice also that $(l,k)$, $(k,r)$, $(r,s)$ and $(s,l)$ must be in $B$ because of Condition~C2.

We may not have $(p,i)$ in $B$ (for any $p$ in $\{r$, $k$, $l$, $s\}$), because otherwise $A$ and $B \setminus \{(p,j)\}$ still satisfy Conditions~C1, C2, C4 and C5 (use the general argument just above to check that $(p,j)$ is not as $(u,v)$ in Requirement~(v6); then, if for instance $p=k$, use Assumption~(II) and $(k,i)$, $(i,s)\in B$ to check that all Conditions~C1, C2, C4, C5 still hold).  Similarly, we may not have $(j,p)$ in $B$.
A similar argument shows $(s,k) \notin B$ (and by symmetry, $(r,l)$, $(k,s)$, $(l,r)\notin B$).

Next, consider the pair $(i,l)$.  By assumption $B$ is $A$-minimal, thus $A$ and $B\setminus\{(i,l)\}$ must invalidate Conditions~C1, C2, C4 or C5.  However, from Assumption~(I) and all the pairs we have obtained in $A$ and $B$ this is impossible except if $(i,l)$ is as $(v,l)$ in Requirement~(v6).  Thus there exists $u$ in $\Vset{n}\setminus\{s$, $r$, $k$, $l\}$ such that $(s,u)$, $(u,i) \in B$.  Similarly, $A$ and $B\setminus\{(s,j)\}$ must invalidate Conditions~C1, C2, C4 or C5, and this can only occur with the existence of $v$ in $\Vset{n}\setminus\{s$, $r$, $k$, $l\}$ such that $(j,v)$, $(v,l) \in B$.  Now if $v\neq i$ or $u \neq j$, we see that we cannot invalidate Requirement~(v6) both times,
a contradiction which completes Case~3. 
So we are left with $u=j$ and $v=i$.  Notice that because of Assumption~(III), there cannot be any further pair $(x,y)$ in $A$ (because $A$ and $B\setminus\{(i,y)\}$ would not invalidate Conditions~C1, C2, C4, C5). So we arrive at the exceptional example, (there may be isolated elements, not appearing in any pair of $A$ or of $B$).
\end{proof}

\vskip4mm

Condition~C1 implies that for a valid inequality 
\begin{equation}\label{eqn_general_valid_again}
\sum_{a \in A} x_{a} - \sum_{b \in B}x_b \leqslant 1
\end{equation} 
the graph $(\Vset{n}, A)$ is a PC-graph, that is, its components are isolated vertices, directed paths or directed cycles.  For many of the structural forms of the PC-graph $(\Vset{n}, A)$, there is only one set $B$ such that Equation~\eqref{eqn_general_valid_again} provides a facet-defining inequality.  We say that a pair $(i,l)$ in $\Vset{n}$ is $A$[C2]-\textsl{forced} if there exist elements $j$ and $k$ such that $i$, $j$, $k$ and $l$ are distinct and moreover $(i,j)$ and $(k,l)$ are in $A$.
Thus the $A$[C2]-forced pairs $(i,l)$ are exactly those that Condition~C2 forces to be in $B$.

\begin{theo}\label{theo_FDI_described}
Consider for disjoint, nonempty subsets $A$ and $B$ of $\Arcs{n}$, the inequality 
\begin{equation}\label{eqn_general_once_more}
\sum_{a \in A} x_{a} - \sum_{b \in B}x_b \leqslant 1,
\end{equation}  
and  suppose that the graph $(\Vset{n}, A)$ is a PC-graph whose  components meet at least one of the following six assumptions:
\begin{enumerate}[\quad\rm 1.]
\item at least one component contains two opposite pairs (of elements) in $A$ (and thus no other pair);
\item at least two components contain at least two pairs in $A$;
\item at least one component contains at least six pairs in $A$;
\item all nontrivial components contain exactly one pair of $A$.
\end{enumerate}
Then Equation~\eqref{eqn_general_once_more} describes a facet-defining inequality for $\pso n$ if and only if $B$ consists exactly of the $A$[C2]-forced pairs.
\end{theo}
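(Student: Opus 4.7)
The plan is to derive this as a direct consequence of Theorem~\ref{theo_FDI_pso}. Write $B^*(A)$ for the set of $A$[C2]-forced pairs; by definition, Condition~C2 holds for the pair $(A,B)$ if and only if $B \supseteq B^*(A)$. I will therefore show that, under the structural hypothesis on $(\Vset{n},A)$, the combined requirements of Theorem~\ref{theo_FDI_pso} (not exceptional, Conditions~C1--C5, $A$-minimality of $B$) collapse into the single condition $B = B^*(A)$.

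For the sufficiency direction, I will assume $B = B^*(A)$. Condition~C1 follows from the PC-graph hypothesis and C2 from the choice of $B$. Conditions~C4 and C5 will then be verified separately in each of the four cases. In Case~4, $A$ contains no directed path of length two, so C4 and C5 hold vacuously. In the other three cases, whenever a directed path of length two or three arises in $A$, the witnessing vertex $p$ (for~C4) or the intermediate vertices $r,s,t,u,v$ (for the six requirements of~C5) can be selected either from another nontrivial component (Case~1, using the $2$-cycle; Case~2, using a second long component) or from further along the same long component (Case~3), and the auxiliary pairs $(i,p),(p,k),(i,r),(r,k),\dots$ that they generate are themselves $A$[C2]-forced, hence already in $B$. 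Non-exceptionality is then immediate: the exceptional pair $(A_6,B_6)$ of Example~\ref{ex_excep} has $A_6$ equal to a $4$-cycle, which satisfies none of the four listed cases. Finally, $A$-minimality is automatic, since removing any pair of $B^*(A)$ breaks Condition~C2 (that pair being by definition required there).

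For the necessity direction, I will assume that Inequality~\eqref{eqn_general_once_more} is facet-defining. Theorem~\ref{theo_FDI_pso} then yields $B \supseteq B^*(A)$ together with the $A$-minimality of $B$. If some $c \in B \setminus B^*(A)$ existed, the very same case-by-case analysis as in the sufficiency step would show that $(A,B\setminus\{c\})$ still satisfies Conditions~C1--C5, because all witnesses for C4 and C5 already lie in $B^*(A) \subseteq B \setminus\{c\}$. Theorem~\ref{theo_valid_ineq} would then give the validity of the reduced inequality, and adding the valid inequality $-x_c \leqslant 0$ would reproduce Inequality~\eqref{eqn_general_once_more}, contradicting its facet-definingness via Lemma~\ref{lem_1}.

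The main obstacle will be the case-by-case verification of Conditions~C4 and C5 under the choice $B = B^*(A)$, most visibly Case~3: a single component containing at least six arcs may be a long directed path or a long directed cycle, and for every triple of consecutive arcs inside it one has to locate, among the remaining vertices of the component, intermediate vertices witnessing one of the six requirements of~C5 in such a way that each auxiliary pair is genuinely $A$[C2]-forced. The threshold ``six'' will turn out to be tight: a $4$-cycle (as in the exceptional example) or a short path yields, at some triple of consecutive arcs, too small a reservoir of witnesses, which is precisely why those configurations have been deliberately excluded from the four cases.
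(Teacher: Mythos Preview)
Your proposal is correct and follows the same route as the paper: the paper's proof is a two-line argument stating that, under any of the structural hypotheses on $(\Vset{n},A)$, the validity of Condition~C2 already forces Conditions~C4 and C5 (``this is easily established''), after which Theorem~\ref{theo_FDI_pso} immediately gives the equivalence with $B=B^*(A)$. You spell out the same reduction with a case-by-case sketch of why C4 and C5 come for free, and you verify non-exceptionality and $A$-minimality explicitly; this is exactly the content the paper leaves implicit. One small slip: you write ``Conditions~C1--C5'' in summarizing Theorem~\ref{theo_FDI_pso}, but the relevant conditions there are C1, C2, C4 and C5 (C3 is the interval-order condition and plays no role here).
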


\begin{proof}
First, remember that $(\Vset{n}, A)$ is a PC-graph exactly if Condition~C1.  Next, we note that if the PC-graph $(\Vset{n}, A)$ satisfies one of the assumptions 1--6, then the validity of Condition~C2 implies the validity of Conditions~C4 and C5 (this is easily established).  Then in the present situation Theorem~\ref{theo_FDI_pso} entails that Equation~\eqref{eqn_general_once_more} gives a facet-defining inequality if and only if $B$ contains the $A$[C2]-forced pairs but no other pair.
\end{proof}

To obtain a full list of all the primary facet-defining inequalities of $\pso{n}$, it remains to investigate the pairs $A$ and $B$ as in Theorem~\ref{theo_FDI_pso} for which the PC-graph $(\Vset{n}, A)$ does not satisfy any of the six assumptions of Theorem~\ref{theo_FDI_described}.  Such a  PC-graph has exactly one nontrivial component which contains more than one pair in $A$, and the number of its pairs equals $2$, $3$, $4$ or $5$ (moreover, in the first case the two pairs are not opposite).   Besides the $A$[C2]-forced pairs, $B$ must contain at least one other pair in order to make Conditions~C4 and C5 valid.  For any given value of $n$, it is in principle possible to list all such possible pairs $A$ and $B$ (say up to relabelling of elements).  However, even for small values of $n$, the task becomes quite tedious and from $n=6$ the number of examples is daunting. 

%%%%%%%%%%%%%%%%%%%%%%%%%%%%%%%%%%%%%%%%%%%%%%%%%%%%%%%%%%%%%%%%%%%%
\section{Searching for the Primary Facet-Defining Inequalities of the Strict Weak Order Polytope}
\label{sect_Primary_FDI_PWO}
	
The strict weak order polytope $\pswo n$  \citep{Regenwetter_Davis_Stober_2012} is closely linked to still another polytope, the \textsl{weak order polytope} $\pwo n$ \citep[studied in][] {Doignon_Fiorini_2002, Fiorini_Fishburn_2004, Regenwetter_Davis_Stober_2008}.  A \textsl{weak order} (or \textsl{complete preorder}) on $\Vset{n}$ is a relation which is reflexive, transitive and total.  Thus the asymmetric part of a weak order is a strict weak order.  The \textsl{weak order polytope} is defined (again in $\R^{\Arcs{n}}$) by
$$
{\pwo n} =  \conv \left(\left\{ \chi^{W} \st W \text{ is a weak order on } \Vset n \right\}\right).
$$
In fact, the polytopes ${\pwo n}$ and ${\pswo n}$ are mutual images by the symmetry in the point $(\frac{1}{2}, \ldots, \frac{1}{2})$ (because the complement of a strict weak order is a weak order, and reciprocally).  Thus the inequality 
\begin{equation}\label{eqn_preorder_bis}
\sum_{(i,j) \in \Arcs{n}} \alpha_{ij}x_{ij} \leqslant \beta
\end{equation}
is facet-defining (resp.~valid) for ${\pswo n}$ if and only if 
\begin{equation}\label{eqn_preorder}
\sum_{(i,j) \in \Arcs{n}} -\alpha_{ij}x_{ij} \leqslant \beta -  \sum_{(i,j) \in \Arcs{n}} \alpha_{ij}
\end{equation}
is facet-defining (resp.~valid) for ${\pwo n}$.   
Our focus here is on primary FDIs for $\pswo n$.  Written as Equation~\eqref{eqn_preorder_bis}, such a primary FDI arises from an FDI of $\pwo n$ as in \eqref{eqn_preorder} with $\alpha_{ij} \in \{-1,0,1\}$ and $\beta - \sum_{(i,j) \in \Arcs{n}} \alpha_{ij}\in\{-1,0,1\}$.
A search in \cite{Doignon_Fiorini_2002} and \cite{Fiorini_Fishburn_2004} led to only nine primary FDIs of $\pswo n$, which we represent  in Figure~\ref{fig_seven_primary_pswo} (adhering to our usual conventions for such figures, also adding the value of the independent term). 

\begin{figure}[h!]
\begin{center}
\begin{tikzpicture}[scale=1.5]
  \tikzstyle{vertex}=[circle,draw,fill=white, scale=0.3]
  
%C1
  \node (1) at (0,0) [vertex] {};
  \node (2) at (0,1) [vertex] {};
  \draw[->-=.7,dashed] (1) to (2);
  \node  at (-0.5,0.75) {$F_1$};
  \node at (0.5,0.25) {$\leqslant 0$};

%C2
\begin{scope}[xshift=2.5cm]
    \node (2-1) at (0,0) [vertex] {};
    \node (2-2) at (0,1) [vertex] {};
 \draw[->-=.7,>= triangle 45] (2-1) to [bend right=15] (2-2);
 \draw[->-=.7,>= triangle 45] (2-2) to [bend right=15] (2-1);
    \node  at (-0.5,0.75) {$F_2$};
    \node at (0.5,0.25) {$\leqslant 1$};
\end{scope}

%C3
\begin{scope}[xshift=5cm]
   \node (3-1) at (0.45,0) [vertex] {};
   \node (3-2) at (-0.45,0) [vertex] {};
   \node (3-3) at (0,1) [vertex] {};
 \draw[->-=.7,>= open triangle 45,dashed] (3-1) to (3-2);
 \draw[->-=.7,>= open triangle 45,dashed] (3-2) to (3-3);
 \draw[->-=.7,>= triangle 45] (3-1) to (3-3);
   \node  at (-0.5,0.5) {$F_3$};
   \node at (0.75,0.25) {$\leqslant 0$};

\end{scope}

%C4
\begin{scope}[xshift=1cm,yshift=-2cm,scale=1]
  \node (4-1) at (1,0) [vertex] {};
  \node (4-2) at (0,1) [vertex] {};
  \node (4-3) at (-1,0) [vertex] {};
  \node (4-4) at (0,-1) [vertex] {};
\draw[->-=.7,>= open triangle 45,dashed] (4-2) to [bend right=15]  (4-3);
\draw[->-=.7,>= open triangle 45,dashed] (4-3) to [bend right=15]  (4-2);
\draw[->-=.7,>= open triangle 45,dashed] (4-1) to [bend right=15]  (4-3);
\draw[->-=.7,>= open triangle 45,dashed] (4-3) to [bend right=15]  (4-1);
\draw[->-=.7,>= open triangle 45,dashed] (4-3) to [bend right=15]  (4-4);
\draw[->-=.7,>= open triangle 45,dashed] (4-4) to [bend right=15]  (4-3);
\draw[->-=.7] (4-1) to [bend right=15] (4-2);
\draw[->-=.7] (4-2) to [bend right=15] (4-1);
\draw[->-=.7] (4-2) to [bend right=15] (4-4);
\draw[->-=.7] (4-4) to [bend right=15] (4-2);
\draw[->-=.7] (4-1) to [bend right=15] (4-4);
\draw[->-=.7] (4-4) to [bend right=15] (4-1);
   \node  at (-1.25,0.75) {$F_4$};
   \node at (1,-0.75) {$\leqslant 0$};

\end{scope}
  
  %C5
\begin{scope}[xshift=5cm,yshift=-2cm,scale=1]
  \node (5-1) at (1,0) [vertex] {};
  \node (5-2) at (0,1) [vertex] {};
  \node (5-3) at (-1,0) [vertex] {};
  \node (5-4) at (0,-1) [vertex] {};
  \draw[->-=.7,>= open triangle 45,dashed] (5-1) to  (5-2);
  \draw[->-=.7,>= open triangle 45,dashed] (5-3) to  (5-2);
  \draw[->-=.7,>= open triangle 45,dashed] (5-4) to  (5-1);
  \draw[->-=.7,>= open triangle 45,dashed] (5-4) to  (5-3);
  \draw[->-=.7,>= open triangle 45,dashed] (5-4) to [bend right=15]  (5-2);
  
  \draw[->-=.7] (5-1) to [bend right=15] (5-3);
  \draw[->-=.7] (5-3) to [bend right=15] (5-1);
    \draw[->-=.7] (5-2) to [bend right=15] (5-4);
  \node  at (-1.25,0.75) {$F_5$};
 \node at (1,-0.75) {$\leqslant 0$};

\end{scope}

  %C6
\begin{scope}[xshift=1cm,yshift=-5cm,scale=1]
  \node (6-1) at (1,0) [vertex] {};
  \node (6-2) at (0,1) [vertex] {};
  \node (6-3) at (-1,0) [vertex] {};
  \node (6-4) at (0,-1) [vertex] {};
  \draw[->-=.7,>= open triangle 45,dashed] (6-1) to [bend right=15] (6-2);
  \draw[->-=.7,>= open triangle 45,dashed] (6-2) to [bend right=15] (6-1);
  \draw[->-=.7,>= open triangle 45,dashed] (6-1) to [bend right=15] (6-4);
  \draw[->-=.7,>= open triangle 45,dashed] (6-4) to [bend right=15] (6-1);
  \draw[->-=.7,>= open triangle 45,dashed] (6-1) to  (6-3);
  
  \draw[->-=.7] (6-2) to [bend right=15] (6-4);
  \draw[->-=.7] (6-4) to [bend right=15] (6-2);
    \draw[->-=.7] (6-2) to (6-3);
        \draw[->-=.7] (6-4) to (6-3);

  \node  at (-1.25,0.75) {$F_6$};
 \node at (1,-0.75) {$\leqslant 1$};

\end{scope}

  %C7
\begin{scope}[xshift=5cm,yshift=-5cm,scale=1]
  \node (7-1) at (1,0) [vertex] {};
  \node (7-2) at (0,1) [vertex] {};
  \node (7-3) at (-1,0) [vertex] {};
  \node (7-4) at (0,-1) [vertex] {};
  \draw[->-=.7,>= open triangle 45,dashed] (7-1) to [bend right=15] (7-2);
  \draw[->-=.7,>= open triangle 45,dashed] (7-2) to [bend right=15] (7-1);
  \draw[->-=.7,>= open triangle 45,dashed] (7-1) to [bend right=15] (7-4);
  \draw[->-=.7,>= open triangle 45,dashed] (7-4) to [bend right=15] (7-1);
  \draw[->-=.7,>= open triangle 45,dashed] (7-3) to  (7-2);
  \draw[->-=.7,>= open triangle 45,dashed] (7-3) to  (7-4);
  
  \draw[->-=.7] (7-2) to [bend right=15] (7-4);
  \draw[->-=.7] (7-4) to [bend right=15] (7-2);
  \draw[->-=.7] (7-3) to (7-1);

  \node  at (-1.25,0.75) {$F_7$};
 \node at (1,-0.75) {$\leqslant 0$};
 \end{scope}
 
   %C8
\begin{scope}[xshift=1cm,yshift=-8cm,scale=1]
  \node (8-1) at (1,0) [vertex] {};
  \node (8-2) at (0,1) [vertex] {};
  \node (8-3) at (-1,0) [vertex] {};
  \node (8-4) at (0,-1) [vertex] {};
  \draw[->-=.7,>= open triangle 45,dashed] (8-2) to [bend left=15] (8-1);
  \draw[->-=.7,>= open triangle 45,dashed] (8-1) to [bend left=15] (8-2);
  \draw[->-=.7,>= open triangle 45,dashed] (8-4) to [bend left=15] (8-1);
  \draw[->-=.7,>= open triangle 45,dashed] (8-1) to [bend left=15] (8-4);
  \draw[->-=.7,>= open triangle 45,dashed] (8-3) to  (8-1);
  
  \draw[->-=.7] (8-2) to [bend left=15] (8-4);
  \draw[->-=.7] (8-4) to [bend left=15] (8-2);
  \draw[->-=.7] (8-3) to (8-2);
  \draw[->-=.7] (8-3) to (8-4);

  \node  at (-1.25,0.75) {$F_8$};
 \node at (1,-0.75) {$\leqslant 1$};
\end{scope}

  %C9
\begin{scope}[xshift=5cm,yshift=-8cm,scale=1]
  \node (9-1) at (1,0) [vertex] {};
  \node (9-2) at (0,1) [vertex] {};
  \node (9-3) at (-1,0) [vertex] {};
  \node (9-4) at (0,-1) [vertex] {};
  \draw[->-=.7,>= open triangle 45,dashed] (9-1) to [bend left=15] (9-2);
  \draw[->-=.7,>= open triangle 45,dashed] (9-2) to [bend left=15] (9-1);
  \draw[->-=.7,>= open triangle 45,dashed] (9-1) to [bend left=15] (9-4);
  \draw[->-=.7,>= open triangle 45,dashed] (9-4) to [bend left=15] (9-1);
  \draw[->-=.7,>= open triangle 45,dashed] (9-2) to  (9-3);
  \draw[->-=.7,>= open triangle 45,dashed] (9-4) to  (9-3);
  
  \draw[->-=.7] (9-4) to [bend left=15] (9-2);
  \draw[->-=.7] (9-2) to [bend left=15] (9-4);
  \draw[->-=.7] (9-1) to (9-3);

  \node  at (-1.25,0.75) {$F_9$};
 \node at (1,-0.75) {$\leqslant 0$};
\end{scope}

\end{tikzpicture}
\end{center}
\caption{\label{fig_seven_primary_pswo}
Graphical representations of nine primary FDIs of $\pswo n$ built from FDI's of $\pwo n$ from \cite{Doignon_Fiorini_2002} and \cite{Fiorini_Fishburn_2004} (the independent term appears after the symbol ``$\leqslant$'').}
\end{figure}  

In several primary FDI's the independent term vanishes (in contrast to Theorem~\ref{thm_FDI_P}.1).
Moreover, Example~$F_4$ in Figure~\ref{fig_seven_primary_pswo} invalidates Conditions~C1 and C3 from Section~\ref{gen_fact}, while Example~$F_5$ invalidates Condition~C2.  In particular, $A$ does not always form a PC-graph---which complicates a lot the search for a classification of the primary FDIs of $\pswo n$.  We leave unsettled the problem  of characterizing the primary FDIs of $\pswo n$.

%%%%%%%%%%%%%%%%%%%%%%%%%%%%%%%%%%%%%%%%%%%%%%%%%%%%%%%%%%%%%%%%%%%%
\section{Searching for the Primary Facet-Defining Inequalities of the Linear Ordering Polytope}
\label{sect_Primary_FDI_PLO}

The linear ordering polytope $\plo n$ has a richer history than the other order polytopes, including in psychology where it is often called the \textsl{binary choice polytope} (see for instance \citealp{Regenwetter_Dana_Davis_Stober_2010, Regenwetter_Dana_Davis_Stober_2011}). 
According to Equation~\eqref{eqn_dim_plo}, it is a polytope of dimension only $n(n-1)/2$.
Its affine hull is minimally described by all linear equations, for  distinct elements $i$, $j$,
\begin{equation}\label{eq_equality_PLO}
x_{ij} + x_{ji} = 1.
\end{equation} 
Moreover, for any two distinct elements $i$, $j$ and $k$, the two following inequalities define facets of $\plo n$:

\begin{align}
\label{eq_trivial_PLO}
-x_{ij} &\leqslant 0,\\
\end{align}

\begin{align}
\label{eq_trans_PLO}
x_{ij} + x_{jk} - x_{ik} &\leqslant 0;
\end{align}
they are respectively the \textsl{trivial inequality} and the \textsl{transitive inequality}.

\begin{theo}[\citealp{Dridi_1980}] 
Equations~\eqref{eq_equality_PLO}, \eqref{eq_trivial_PLO} and \eqref{eq_trans_PLO} form a linear description of the linear ordering polytope $\plo n$ if and only if $n \leqslant 5$.
\end{theo}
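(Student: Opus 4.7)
The plan is to prove the equivalence in the two directions separately. For the easier necessity direction (the ``only if''), the aim is to exhibit, for any $n \geqslant 6$, a single facet-defining inequality of $\plo n$ that is neither a trivial inequality~\eqref{eq_trivial_PLO} nor a transitive inequality~\eqref{eq_trans_PLO}. The natural candidate is the $3$-fence inequality from Example~\ref{exam_n_fence} (the instance $m=3$ of~\eqref{eqn_09}), built on six pairwise distinct elements $a_1, a_2, a_3, b_1, b_2, b_3$ of $\Vset n$. By classical results of \cite{Grotschel_Junger_Reinelt_1985a}, this inequality is facet-defining for $\plo 6$; a routine lifting argument of the type used for Theorem~\ref{theo_lifting} (any additional element is placed ``above'' or ``below'' the six distinguished ones) extends the conclusion to all $n\geqslant 6$. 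Since its reduced form has six pairs of elements with nonzero coefficients (three $+1$, six $-1$), while trivial and transitive inequalities each involve only one or two pairs, this inequality cannot arise as a nonnegative combination of them modulo~\eqref{eq_equality_PLO}. The jump from $2$ to $5$ equivalence classes of FDI's between $n=5$ and $n=6$ recorded in Table~\ref{tab_small_n_bis} is of course consistent with this.

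For the sufficiency direction ($n \leqslant 5$), let $Q_n$ denote the polytope in $\R^{\Arcs n}$ cut out by~\eqref{eq_equality_PLO},~\eqref{eq_trivial_PLO} and~\eqref{eq_trans_PLO}. The inclusion $\plo n \subseteq Q_n$ is immediate, so the plan is to prove $Q_n \subseteq \plo n$, i.e.\ that every vertex of $Q_n$ takes values only in $\{0,1\}$ when $n \leqslant 5$. The cases $n=2$ (a segment) and $n=3$ (the $3$-dimensional polytope $Q_3$, a direct hand-check) are easy. For $n=4$ and $n=5$, my plan is an inductive/rounding approach: given a putative fractional vertex $x^*$ of $Q_n$, inspect the restrictions $x^*|_T$ of $x^*$ to coordinates indexed by pairs within each $k$-element subset $T \subseteq \Vset n$ with $k \leqslant n-1$; by an inductive hypothesis each such restriction belongs to $\plo {|T|}$ and is therefore a convex combination of linear orders on $T$. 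One then tries to paste these local decompositions into a global decomposition of $x^*$ into characteristic vectors of linear orders on $\Vset n$, a combinatorial reconstruction that works precisely when $n \leqslant 5$. A cleaner shortcut, at the cost of relying on computer enumeration, is to invoke Table~\ref{tab_small_n_bis}: for $n \in \{3,4,5\}$ it records exactly two equivalence classes of FDI's of $\plo n$ up to relabellings, and a direct inspection identifies these two classes with the classes of \eqref{eq_trivial_PLO} and \eqref{eq_trans_PLO}.

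The main obstacle will be a clean, computer-free proof of the sufficiency for $n=5$. Already $Q_4$ has dimension~$6$ with many facets, and $Q_5$ (dimension $10$) admits a large variety of configurations of tight constraints at a hypothetical fractional vertex. The pasting step in the induction is delicate: the local convex decompositions over different $4$-element subsets must be made globally compatible, and this is exactly where the special arithmetic of $n=5$ versus $n \geqslant 6$ comes into play (at $n=6$ the $3$-fence configuration appears as an obstruction to pasting). An appealing alternative to case analysis is to try to prove that the defining system of $Q_n$ is totally dual integral when $n \leqslant 5$; this would yield sufficiency uniformly, and would also explain structurally why the description breaks at $n=6$, where fence-type inequalities must be added as genuine Chv\'atal--Gomory cuts.
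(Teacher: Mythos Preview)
The paper does not supply a proof of this theorem: it is stated with attribution to \cite{Dridi_1980} and no argument follows, so there is nothing in-paper to compare your proposal against.

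Your necessity direction is correct and matches what the paper records immediately after the theorem: the $3$-fence inequality is facet-defining for $\plo 6$ (and hence for all $\plo n$ with $n\geqslant 6$) by \cite{Grotschel_Junger_Reinelt_1985a}, and it is manifestly not equivalent modulo~\eqref{eq_equality_PLO} to any trivial or transitive inequality.

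Your sufficiency direction, however, remains a plan rather than a proof, and the gap sits exactly where you locate it. Knowing that each restriction $x^*|_T$ lies in $\plo{|T|}$ for every $|T|\leqslant n-1$ does \emph{not} by itself yield a global decomposition of $x^*$ into characteristic vectors of linear orders on $\Vset n$: the local convex decompositions over different subsets $T$ need not be mutually compatible, and you have not supplied the combinatorial argument that forces compatibility when $n\leqslant 5$. Invoking Table~\ref{tab_small_n_bis} is logically admissible but replaces Dridi's argument by a computer enumeration; the TDI suggestion is pure speculation. To complete a self-contained proof you must either reproduce Dridi's original analysis or actually execute the pasting/compatibility step for $n=4$ and $n=5$.
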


Hence we already know all the FDI's of $\plo n$ for $2 \leqslant n \leqslant 5$; moreover, for such values of $n$, there is a linear description of $\plo n$ which consists only of primary linear  equations.  The same holds for $n=6$, by results of \cite{Marti_Reinelt_2011}. 
Notice that the $3$-fence inequality defines a facet of $\plo 6$, and thus of $\plo n$.  When $3 \leqslant m$ and $2m \leqslant n$, \cite{Grotschel_Junger_Reinelt_1985a} and \cite{Cohen_Falmagne_1978}  \cite[see][]{Cohen_Falmagne_1990} independently established the same assertion for the $m$-fence and $\plo n$.

When searching for primary FDI's of $\plo n$ we have to take into account that a facet admits several descriptions (for instance, Equations~\eqref{eqn_3} and \eqref{eqn_4} define the same facet). 
Among the many descriptions of a facet, some might be primary.  Let us check this on a particular example extracted from a family in \cite{Grotschel_Junger_Reinelt_1985a}.

\begin{figure}[h!]
\begin{center}
\begin{tikzpicture}[scale=1.6]
  \tikzstyle{vertex}=[circle,draw,fill=white, scale=0.3]

  \node (1) at (26:2) [vertex] {};
  \node (2) at (77:2) [vertex] {};
  \node (3) at (128:2) [vertex] {};
  \node (4) at (180:2) [vertex] {};
  \node (5) at (-128:2) [vertex] {};
  \node (6) at (-77:2) [vertex] {};
  \node (7) at (-26:2) [vertex] {};
  
        \node at (26:2.15) {$1$};
        \node at (77:2.15) {$2$};    
        \node at (128:2.15) {$3$};    
        \node at (180:2.15) {$4$};    
        \node at (-128:2.15) {$5$};    
        \node at (-77:2.15) {$6$};    
        \node at (-26:2.15) {$7$};

  \node (8) at (26:1) [vertex] {};
  \node (9) at (77:1) [vertex] {};
  \node (10) at (128:1) [vertex] {};
  \node (11) at (180:1) [vertex] {};
  \node (12) at (-128:1) [vertex] {};
  \node (13) at (-77:1) [vertex] {};
  \node (14) at (-26:1) [vertex] {};
  
         \node at (26:0.8) {$8$};
        \node at (77:0.8) {$9$};    
        \node at (128:0.8) {$10$};    
        \node at (180:0.8) {$11$};    
        \node at (-128:0.8) {$12$};    
        \node at (-77:0.8) {$13$};    
        \node at (-26:0.8) {$14$};
  
   \node at (2.5,-1.8) {$\leqslant 17$};
 
 \draw [->-=.6] (1) -- (2);
 \draw [->-=.6] (2) -- (9);
 \draw [->-=.6] (9) -- (8);
 \draw [->-=.6] (8) -- (1); 
 \draw [->-=.6] (3) -- (2);
 \draw [->-=.6] (10) -- (3);
 \draw [->-=.6] (9) -- (10);
 \draw [->-=.6] (3) -- (4);
 \draw [->-=.6] (4) -- (11);
 \draw [->-=.6] (11) -- (10);
 \draw [->-=.6] (5) -- (4);
 \draw [->-=.6] (12) -- (5);
 \draw [->-=.6] (11) -- (12);
 \draw [->-=.6] (5) -- (6);
 \draw [->-=.6] (6) -- (13);
 \draw [->-=.6] (13) -- (12);
 \draw [->-=.6] (7) -- (6);
 \draw [->-=.6] (14) -- (7);
 \draw [->-=.6] (13) -- (14);
 \draw [->-=.6] (7) -- (8);
 \draw [->-=.6] (1) -- (14);
\end{tikzpicture}
\end{center}
\caption{\label{fig_Mobius}
The graphical description of a M\"obius inequality as in Example~\ref{exa_Mobius}.}
\end{figure}
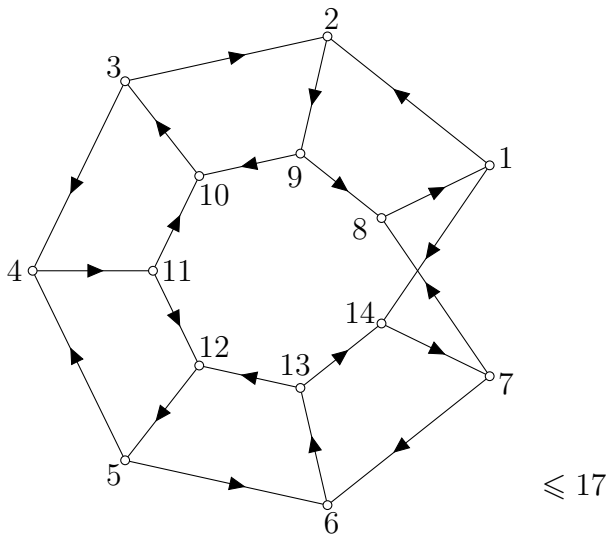

\begin{exam}[A M\"obius inequality] \label{exa_Mobius} 
Assume $n=14$ and let $A$ consist of the 21 pairs shown in Figure~\ref{fig_Mobius}.  Taking $B=\es$, we form the inequality
\begin{equation}\label{eqn_Mobius}
\sum_{a\in A} x_a \;\leqslant\; 17.
\end{equation}
The latter inequality defines a facet of $\plo n$ when $n \geqslant 14$ \citep{Grotschel_Junger_Reinelt_1985a}.  It fails to be primary only because of the independent term.  However we may remedy this by subtracting $16$, $17$ or $18$ equations~\eqref{eq_equality_PLO} from \eqref{eqn_Mobius} (selecting as many distinct, unordered pairs $\{i$, $j\}$).
\end{exam}

In the last example, a whole menagerie of primary FDI's of $\plo n$ results from the various possible choices of the pairs $\{i$, $j\}$.  Notice also that most of the resulting primary FDI's invalidate Conditions~C1 and C2;  moreover, some of them invalidate Conditions~C3, C4 and C5. 
Transforming a FDI of $\plo n$ into a primary FDI for the same facet can be done, of course, starting from many other equations than Equation~\eqref{eqn_Mobius}; for instance, all equations whose coefficients in front of variables take value $0$ or $+1$ apply (their independent term must be nonnegative and at most the number of $+1$, as seen by evaluating the left-hand side at a vertex in the facet), and even more equations do.  Consequently, the process produces a huge number of primary FDI's of the linear ordering polytope starting from known FDI's such as
\begin{enumerate}
\item M\"obius ladder inequalities, a family generalizing Equation~\eqref{eqn_Mobius} \citep{Grotschel_Junger_Reinelt_1985a},
\item $Z_k$-inequalities \citep{Reinelt_1985},
\item Paley inequalities \citep{Goemans_Hall_1996},
\item graphical inequalities \citep{Koppen_1995},
\item inequalities derived from nonorientable surfaces\citep{Fiorini_2006a},
\item a vast family containing further inequalities \citep{Fiorini_2006b}.
\end{enumerate}
In addition to these well-known classes, additional primary FDI's for $\plo n$ were obtained for $n=7$  \citep{Reinelt_1985}.
For a summary of FDI families known in year 2000, see \cite{Fiorini_thesis}.  The profusion and diversity of primary FDI's explain why we do not enter the classification enterprise for the linear ordering polytope.

\newpage % ??

%%%%%%%%%%%%%%%%%%%%%%%%%%%%%%%%%%%%%%%%%%%%%%%%%%%%%%%%%%%%%%%%

\addcontentsline{toc}{section}{References}
\bibliographystyle{model5-names} 
\bibliography{bibli}

\end{document}